\pdfoutput=1

\documentclass[10pt]{amsart}

\usepackage[english]{babel}

\usepackage{amsmath}

\usepackage{amssymb}

\usepackage{graphicx}


\newtheorem{thm}{Theorem}[section]

\newtheorem{thmx}{Theorem} 

\newtheorem{prop}[thm]{Proposition}
\newtheorem{lemma}[thm]{Lemma}

\theoremstyle{definition}
\newtheorem{defn}{Definition}[section]
\newtheorem{ex}{Example}

\theoremstyle{remark}
\newtheorem{remark}{Remark}

\numberwithin{equation}{section}

\def\C{\mathbb{C}}
\def\R{\mathbb{R}}


\newcommand{\RedeclareMathOperator}[2]{%
        \let#1\undefined
        \DeclareMathOperator{#1}{#2}}


\DeclareMathOperator{\cn}{cn}
\DeclareMathOperator{\dn}{dn}



\DeclareMathOperator*{\contfrac}{\lower 0.4em\hbox{\larger[4]\bf K}}

\renewcommand{\Re}{{\rm Re}\,}          
\renewcommand{\Im}{{\rm Im}\,}          


\begin{document}

\title[Quantization of the conformal arclength functional]{Quantization of the conformal arclength
functional on space curves}

\author{Emilio Musso}
\address{(E. Musso) Dipartimento di Scienze Matematiche, Politecnico di Torino,
Corso Duca degli Abruzzi 24, I-10129 Torino, Italy}
\email{emilio.musso@polito.it}

\author{Lorenzo Nicolodi}
\address{(L. Nicolodi) Di\-par\-ti\-men\-to di Scienze Ma\-te\-ma\-ti\-che, Fisiche e Informatiche,
Uni\-ver\-si\-t\`a di Parma, Parco Area delle Scienze 53/A, I-43124 Parma, Italy}
\email{lorenzo.nicolodi@unipr.it}

\thanks{Authors partially supported by
PRIN 2010-2011 ``Variet\`a reali e complesse: geometria, to\-po\-lo\-gia e analisi ar\-mo\-ni\-ca'';
FIRB 2008 ``Geometria Differenziale Complessa e Dinamica Olomorfa'';
and the GNSAGA of INDAM}

\subjclass[2000]{53A30, 53A04, 53A55, 53D20, 58A17, 58-04}


\keywords{M\"obius geometry of curves, closed trajectories,
conformal arclength functional, conformal strings,
quantization of trajectories, Griffiths' formalism, linking numbers}

\begin{abstract}
By a \textit{conformal string} in Euclidean space
is meant a closed critical curve with non-constant conformal curvatures
of the conformal arclength functional.
We prove that (1) the set of conformal classes of conformal strings
is in 1-1 correspondence with the rational points of the complex domain
$\{q\in \mathbb{C} \,:\, {1}/{2} < \Re q< {1}/{\sqrt{2}},\,\, \Im q>0,\,\, |q| < {1}/{\sqrt{2}}\}$
and (2) any conformal class has a model conformal string, called \textit{symmetrical configuration},
which is determined by three phenomenological invariants: the order of its symmetry group and
its linking numbers with the two conformal circles representing the rotational axes of
the symmetry group.
This amounts to the quantization of closed trajectories of
the contact dynamical system associated to the conformal arclength functional
via Griffiths' formalism of the calculus of variations.
\end{abstract}

\maketitle


\section*{Introduction}\label{s:intro}

The M\"obius geometry of space curves was mainly developed in the first half of the past
century \cite{F,L,T,V} and later taken up starting from the early 1980's \cite{CSW,LO2010,SS,S}.
Further developments of the subject as an instance of the conformal geometry of submanifolds
can be found in \cite{BC} and the literature therein.
The subject has also received much attention for its many fields of application, including
the theory of integrable systems \cite{CQ,EB}, the topology and M\"obius energy of
knots \cite{BFHW,FHW,LO}, and the geometric approach to shape analysis and medical
imaging \cite{OS-Med-im}.

Let $\gamma\subset\R^n$, $n\geq 3$, be a smooth curve parametrized by arclength $s$. The
{\em conformal arclength} parameter $\zeta$ of $\gamma$ is defined (up to a constant) by
\begin{equation}\label{inf-conf-par}
 d\zeta =  \left(\langle\dddot\gamma,\dddot\gamma\rangle
    -\langle\ddot\gamma, \ddot\gamma\rangle^2 \right)^{\frac{1}{4}}ds =: \eta_\gamma,
 \end{equation}
%
%
where $\langle\,,\rangle$ is the standard scalar product on $\R^n$.
The 1-form $\eta_\gamma$, the {\em infinitesimal conformal arclength} of $\gamma$,
is conformally invariant.
If ${\eta_\gamma}{\vert_s} \neq 0$, for each $s$, the curve
is called {\em generic}.\footnote{Generic curves, either closed or not, constitute an
open dense subset of all smooth curves in the $C^\infty$ topology (cf. \cite{CSW}).}
The conformal arclength $\zeta$ gives a conformally invariant parametrization of a generic curve.
We consider the conformally invariant variational problem on generic
curves defined by the {\em conformal arclength functional}
\begin{equation}\label{var-pbm}
 \mathcal{L}[\gamma] = \int_\gamma \eta_\gamma.
  \end{equation}
This variational problem was studied in \cite{M1} for $n=3$
and more recently in \cite{MMR} for higher dimensions.\footnote{We
adhere to the standard terminology adopted
for $\mathcal L$ in the literature.
However, observe
that
$\left(\langle\dddot\gamma,\dddot\gamma\rangle
    -\langle\ddot\gamma, \ddot\gamma\rangle^2 \right)^{1/4}$ has the dimension $L^{-1}$,
    so that $\eta_\gamma$ is dimensionless.}
Accordingly,
the critical curves can be found by quadratures and explicit parametrizations are given
in terms of elliptic functions and integrals.
In this paper we address the question of existence and properties
of closed critical curves for the functional $\mathcal L$.
Actually, it suffices to consider the 3-dimensional case only,
since from the results in \cite{MMR}
we can see that
any closed critical curve in $\R^n$ lies in some
$\R^3 \subset \R^n$, up to a conformal transformation.
It is known that a generic space curve is determined,
up to conformal transformations, by the conformal arclength and two
conformal curvatures (cf. Section \ref{s:1}).
As for a closed critical
curve with constant conformal curvatures, one can see that it is conformally equivalent
to a closed rhumb line (loxodrome) of a torus of revolution (cf. Example \ref{e:1}).

\vskip0.1cm

The purpose of this paper is to study the class of closed critical curves with non-constant
conformal curvatures,
for brevity called {\it conformal strings}.
We begin by describing our three main results.
If $\gamma$, $\widetilde{\gamma}:\R \to \R^3$ are two curves
and
$[\gamma]$, $[\widetilde{\gamma}]$ denote their trajectories,
then $\gamma$ and $\widetilde{\gamma}$ are said {\it M\"obius} ({\it conformally}) {\it equivalent}
if there is an element $A$ of the M\"obius group $G$ of $\mathbb R^3$, such
that $A\cdot [\gamma]=[\widetilde{\gamma}]$.
By a {\it conformal symmetry} of a curve
$\gamma$ is meant
an element $A\in G$, such that $A\cdot [\gamma]=[\gamma]$. The set of all symmetries of $\gamma$ is
a subgroup $G_{\gamma}$ of $G$.
The symmetry group of a closed curve
with non-constant conformal curvatures is finite and its cardinality is called the
{\it symmetry index} of $\gamma$.
Our first main result is the following.

\begin{thmx}\label{thm:A}
The M\"obius classes
of conformal strings are in 1-1 correspondence
with the rational points of the
complex domain
\[
 \Omega=
 \left\{ q\in \mathbb{C} \,\,:\,\, \frac{1}{2} < \Re q<\frac{1}{\sqrt{2}},\,\, \Im q>0,\,\,
  |q|< \frac{1}{\sqrt{2}} \right\}.
  \]
The rational points of $\Omega$ are called the {\em moduli} of conformal strings.

\end{thmx}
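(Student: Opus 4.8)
The plan is to push the Griffiths/moving--frame analysis of $\mathcal L$ down to the monodromy of the canonical frame, and to read off closure as a rationality condition on a pair of rotation angles. First I would recall from Section~\ref{s:1} and \cite{M1} the canonical lift $F_\gamma\colon\R\to G$ of a generic curve: written in the conformal arclength $\zeta$, the Maurer--Cartan form $F_\gamma^{-1}\,dF_\gamma$ depends only on the two conformal curvatures $(k_1,k_2)$, and for a critical curve the Euler--Lagrange system is integrable by quadratures, so that $(k_1,k_2)$ — and hence the whole connection form along $\gamma$ — are explicit elliptic functions of $\zeta$ depending on the conserved momenta $\mu$ of the problem, with $\mu$ ranging over an open set $U\subset\R^2$ on which the quadrature is real and non-degenerate. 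For a conformal string the curvatures are non-constant; let $\varpi=\varpi(\mu)>0$ be their least period, the \emph{wavelength}.

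Next I would analyze the monodromy. Since $F_\gamma^{-1}\,dF_\gamma$ is $\varpi$--periodic, a Floquet argument produces a constant $M=M(\mu)\in G$, the \emph{monodromy}, with $F_\gamma(\zeta+\varpi)=F_\gamma(\zeta)\,M$ for all $\zeta$; because the canonical frame is intrinsic, $M$ is a conformal invariant, well defined up to conjugacy. As the canonical frame of a closed curve is itself periodic, the trajectory $[\gamma]$ is closed if and only if $M$ has finite order in $G$, and its order is then the symmetry index $n$ of $\gamma$. A finite--order M\"obius transformation is elliptic, hence conjugate into a compact torus $SO(2)\times SO(2)\subset G$ leaving invariant a pair of disjoint (Hopf--linked) conformal circles — the rotational axes of $G_\gamma$ appearing in Theorem~B — so $M(\mu)$ is a ``double rotation'' by angles $\bigl(\alpha(\mu),\beta(\mu)\bigr)$, and the closure condition becomes exactly the pair of commensurability relations $\alpha/2\pi,\ \beta/2\pi\in\mathbb{Q}$, with $n$ the least common denominator. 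I would then obtain $\alpha(\mu)$ and $\beta(\mu)$ explicitly, as complete elliptic integrals in the momenta, by integrating the reduced linear system over one wavelength.

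Finally I would encode the two angles as a single complex coordinate $q=q(\mu)$, built (after a suitable affine normalization) from $\alpha(\mu)$ and $\beta(\mu)$ so that $\Re q$ and $\Im q$ are the two normalized frequencies, and prove that $\mu\mapsto q(\mu)$ is a diffeomorphism of $U$ onto $\Omega$; the three inequalities cutting out $\Omega$ then come from the admissible range $k\in(0,1)$ of the underlying elliptic modulus together with the reality and root--ordering constraints that define $U$, both read off from the elliptic--integral formulas for $\alpha,\beta$. Granting this, the rational points of $\Omega$ are precisely the momenta for which $\alpha/2\pi$ and $\beta/2\pi$ are both rational, i.e.\ the conformal strings. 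The correspondence is $1$--$1$ on M\"obius classes: M\"obius--equivalent conformal strings have the same curvature functions up to a shift of $\zeta$, hence the same $q$; conversely a rational $q\in\Omega$ determines $\mu$, hence $(k_1,k_2)$ up to such a shift, hence a unique M\"obius class, and the corresponding integral curve of the reduced system closes after $n$ wavelengths and has non-constant curvatures, so it is a genuine conformal string.

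\textbf{Main obstacle.} The technical core is the global study of the period map $\mu\mapsto\bigl(\alpha(\mu),\beta(\mu)\bigr)$: one must establish strict monotonicity of the complete elliptic integrals involved and their precise asymptotics along the three parts of $\partial\Omega$ (the degenerate/soliton limits and the constant--curvature limit) in order to show that the image is \emph{exactly} $\Omega$ and the map is injective. A secondary, but delicate, point is the bookkeeping that identifies closure of the \emph{unparametrized} trajectory with rationality of \emph{both} rotation angles — rather than of some coarser invariant — and that expresses the symmetry index and the two linking numbers of Theorem~B through the numerators and denominators of $\Re q$ and $\Im q$.
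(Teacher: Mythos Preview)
Your proposal is correct and follows essentially the same route as the paper: the paper parametrizes quasi-periodic critical curves by $(a,b)\in S$, shows (via the eigenvalue structure of the momentum $\mathfrak{X}$, which is purely imaginary precisely when $ab>1$) that closure is equivalent to the rationality of the two ``rotation numbers'' $\Phi_1,\Phi_2$ obtained by integrating the diagonalized linear system over one wavelength, and then devotes Section~\ref{s:3} to exactly the technical core you flag --- proving that the period map $\Phi=(\Phi_1,\Phi_2)$ is a real-analytic diffeomorphism of $\Sigma=\{ab>1\}$ onto the domain $\widetilde{\Omega}\cong\Omega$ by computing the Jacobian, establishing sign/monotonicity of the partial derivatives, and analyzing the boundary behavior ($a=b$ giving $|q|=1/\sqrt{2}$, $ab=1$ giving $\Im q=0$, and $a\to\infty$ giving $\Re q=1/2$). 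Your identification of the main obstacle is exactly right; the only places where your sketch is slightly loose are the need to first rule out the non-elliptic regimes $ab\le 1$ before asserting the monodromy is a double rotation, and the normalization (orientation, sign of $k_1$, shift of $\zeta$) required to make ``same curvatures $\Rightarrow$ same class'' a genuine bijection.
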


Using this theorem and other technical results, we will prove that
any M\"obius class of strings is represented by a model string. This is
our second main result.

\begin{thmx}\label{thm:B}
The conformal strings corresponding to a modulus $q\in \Omega$ are M\"obius
equivalent to
a model string $\gamma_{q}=(x(t),y(t),z(t)) : \R\to \R^3$,
\begin{equation}
\begin{cases}
x(t)=\frac{\sqrt{2}}{r(t)}\mu\sqrt{k(t)^2-\upsilon^2}\cos \Theta_2(t) ,\\
y(t)=\frac{\sqrt{2}}{r(t)}\mu\sqrt{k(t)^2-\upsilon^2}\sin \Theta_2(t) ,\\
z(t)=\frac{\sqrt{2}}{r(t)}\upsilon\sqrt{\mu^2-k(t)^2}\sin \Theta_1(t),\\
\end{cases}
\end{equation}
called the {\em symmetrical configuration} of $q$.
 Here
\[
\begin{aligned}
k(t) &=
\left\{\!\!\!
\begin{array}{lr}
\sqrt{a} \cn\big(\sqrt{a-b}\,t, \frac{a}{a-b}\big),& b<0,\\
 \sqrt{a}\dn\big(\sqrt{a}\,t,\frac{a-b}{a}\big), &  b>0,
\end{array}
\right.\\
r(t)&=\sqrt{\mu^2-\upsilon^2}k(t)+\upsilon\sqrt{\mu^2-k(t)^2} \cos \Theta_1(t),\\
\end{aligned}
\]
where $\mu=\frac{1}{\sqrt{2}}\sqrt{a+b+\sqrt{4+(a-b)^2}}$,
$\upsilon=\frac{1}{\sqrt{2}}\sqrt{a+b-\sqrt{4+(a-b)^2}}$,
\[
 \Theta_1(t)=\int_0^t\frac{\mu}{\mu^2-k(u)^2}du,\quad
   \Theta_2(t)=\int_0^t\frac{\upsilon}{\upsilon^2-k(u)^2}du,
   \]
   and
where $a$ and $b$
are real parameters, uniquely defined by $q$, such that $a>0$, $a>b$, $b\neq0$, and $ab>1$.

\end{thmx}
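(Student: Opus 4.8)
The strategy is to run Griffiths' reduction backwards: from a conformal string extract the conserved momentum of the variational problem, normalize it by a M\"obius transformation, integrate the reduced Euler--Lagrange system, reconstruct the curve by quadratures, and identify the outcome with the displayed formulas.

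First I would invoke the moving-frame analysis of $\mathcal L$ (cf.\ Section~\ref{s:1} and \cite{M1}): a critical curve carries a canonical lift $F:\R\to G$ into the M\"obius group solving the Euler--Lagrange system, and the $G$-invariance of $\mathcal L$ produces, via Noether's theorem, a constant \emph{momentum} $\mathcal M\in\mathfrak g=\mathfrak{so}(4,1)$ attached to $\gamma$. Acting on $\gamma$ by $A\in G$ replaces $\mathcal M$ by $\mathrm{Ad}_A\mathcal M$, so we are free to normalize $\mathcal M$. For a conformal string $\mathcal M\neq 0$; since the trajectory is compact and sits on a torus-type orbit, $\mathcal M$ is elliptic, i.e.\ after conjugation it lies in the maximal compact $\mathfrak{so}(4)\subset\mathfrak{so}(4,1)$ and acts on the Minkowski space $\R^{4,1}$ with two pairs of conjugate imaginary eigenvalues and a one-dimensional kernel. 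I would fix the normal form so that $\mathcal M$ is the generator of the two-parameter group of commuting ``conformal rotations'' about the two disjoint conformal circles
\[
C_1=\{x=y=0\}\cup\{\infty\},\qquad C_2=\{z=0,\ x^2+y^2=1\},
\]
which are the conformal circles carrying the rotational axes of the symmetry group. Writing $\mathcal M$ as a skew endomorphism of $\R^{4,1}$ whose characteristic polynomial is governed by two real parameters $a,b$ and solving that equation yields $\mu=\frac{1}{\sqrt 2}\sqrt{a+b+\sqrt{4+(a-b)^2}}$ and $\upsilon=\frac{1}{\sqrt 2}\sqrt{a+b-\sqrt{4+(a-b)^2}}$; the inequalities $a>0$, $a>b$, $b\neq 0$, $ab>1$ are precisely the open conditions selecting this eigenvalue type together with non-constancy of the conformal curvatures (so that $\gamma$ is a genuine string).

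Next I would carry out the reduction. With $\mathcal M$ in normal form the Euler--Lagrange system collapses to a single first-order ODE for one conformal curvature $k$, which one computes to be $\dot k^2=(a-k^2)(k^2-b)$ in an affine parameter $t$ proportional to the conformal arclength; integrating it gives $k(t)=\sqrt a\,\cn(\sqrt{a-b}\,t,\frac{a}{a-b})$ when $b<0$ and $k(t)=\sqrt a\,\dn(\sqrt a\,t,\frac{a-b}{a})$ when $b>0$, the dichotomy recording whether $k$ vanishes along the string. Because $\mathcal M$ generates commuting rotations in two invariant $2$-planes, the remaining structure equations for $F$ split into the rotational quadratures $\Theta_1(t)=\int_0^t \mu/(\mu^2-k(u)^2)\,du$, $\Theta_2(t)=\int_0^t \upsilon/(\upsilon^2-k(u)^2)\,du$ and the auxiliary radial function $r(t)=\sqrt{\mu^2-\upsilon^2}\,k(t)+\upsilon\sqrt{\mu^2-k(t)^2}\cos\Theta_1(t)$; reassembling $F(t)$ in a conformal frame adapted to the pair $(C_1,C_2)$ and projecting the conformal $3$-sphere onto $\R^3$ by the compatible stereographic projection then gives exactly the stated $\gamma_q=(x(t),y(t),z(t))$. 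Finally, Theorem~\ref{thm:A} identifies the M\"obius class of $\gamma$ with a point $q\in\Omega$; closure of $\gamma$ forces $q$ to be rational, and conversely each modulus $q$ determines a unique admissible pair $(a,b)$ and hence a unique symmetrical configuration $\gamma_q$, the constants of integration being pinned down by placing $F(0)$ in standard position.

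The hard part will be the reconstruction: one must show that, with $\mathcal M$ normalized, integrating the Maurer--Cartan equations of the canonical lift genuinely decouples into the three quadratures $k,\Theta_1,\Theta_2$. This demands a careful choice of the conformal frame adapted to $(C_1,C_2)$, precise bookkeeping of integration constants, and the verification of a web of algebraic identities among $\mu,\upsilon,a,b$ — for instance that the formula for $r(t)$ is consistent with $\dot k^2=(a-k^2)(k^2-b)$ and with the normalization constraints of the spherical lift. A secondary technical point, needed for the uniqueness clause, is the bijectivity of $q\mapsto(a,b)$ onto $\{a>0,\ a>b,\ b\neq 0,\ ab>1\}$, which reduces to analyzing the period map of the elliptic integrals entering $\Theta_1,\Theta_2$ over a period of $k$ — essentially the monodromy computation underlying Theorem~\ref{thm:A}.
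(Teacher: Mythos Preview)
Your proposal is correct and follows essentially the same route as the paper: the constant momentum is the matrix $\mathfrak{X}\in\mathfrak g$ of Lemma~\ref{l:momentum}, its diagonalization (eigenvalues $0,\pm i\mu,\pm i\upsilon$) and the ensuing quadratures are exactly the content of Lemma~\ref{RL}, and the explicit null lift $\widetilde\Gamma$ in \eqref{5.5} is then seen to be a constant $GL(5,\C)$-combination of the fundamental solutions $w_0,\dots,w_4$, forcing the linking matrix to lie in $G_+$. The only organizational difference is that the paper fixes the initial condition $\mathrm F(0)=\mathrm{Id}$ and compares two lifts a posteriori, whereas you normalize the momentum by $\mathrm{Ad}$ a priori; these are equivalent bookkeeping choices, and the ``hard part'' you flag is precisely what is already carried out in the proof of Lemma~\ref{RL} (so in the paper the proof of Theorem~\ref{thm:B} itself is only a few lines). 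One small slip: the Clifford circle in this normalization has radius $\sqrt 2$, not $1$.
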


The symmetry group of a symmetrical configuration has a special structure,
which is described by the following.

\begin{thmx}\label{thm:C}
Let $\gamma_{q} : \R\to \R^3$ be the symmetrical configuration
corresponding to the modulus $q = q_1 +i q_2$, where $q_1=m_1/n_1$, $q_2=m_2/n_2$,
and the pairs $(m_1,n_1)$, $(m_2,n_2)$ are coprime integers.
Let $n$ be the least common multiple of $n_1$ and $n_2$, and consider the coprime
integers $h_1=n/n_1$ and $h_2=n/n_2$. Then,
\begin{enumerate}
\item $n$ is the order of the symmetry group of $\gamma_{q}$;

\item
$m_1h_1$ and $m_2h_2$
are the linking numbers of $\gamma_{q}$ with the Clifford circle
\[
  \mathcal{C}=\left\{(x,y,0) \in\R^3 : x^2+y^2=2\right\}
    \]
and the $z$-axis, respectively.
\end{enumerate}
\end{thmx}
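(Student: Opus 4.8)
The plan is to realise $\gamma_q$ as the stereographic image of a closed curve on $S^3=\R^3\cup\{\infty\}$ lying in the open solid region swept out by a one-parameter family of Clifford tori, with $\mathcal{C}$ and the $z$-axis $\cup\{\infty\}$ the stereographic images of the two core Hopf circles, and then to read off both statements from the way $\gamma_q$ winds around these two circles. First I would match the parametrisation of Theorem~\ref{thm:B} with a lift $\widetilde{\gamma}_q(t)=(w_1(t),w_2(t))\in S^3\subset\C^2$ for which $\arg w_1=\Theta_2$, $\arg w_2=\Theta_1$ and $|w_1|^2+|w_2|^2=1$, normalised so that the Hopf circles $\{w_2=0\}$ and $\{w_1=0\}$ project to $\mathcal{C}$ and to the $z$-axis $\cup\{\infty\}$ respectively (the denominator $r(t)$ plays the role of the conformal factor of the stereographic projection). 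I would also recall from the construction of the moduli (Section~\ref{s:1} and the normal form behind Theorem~\ref{thm:B}) that, writing $\mathbf{T}$ for the fundamental period of the elliptic function $k$, the functions $\Theta_1$ and $\Theta_2$ increase by $2\pi q_1$ and $2\pi q_2$ over one period $\mathbf{T}$; this is precisely the definition of the modulus $q=q_1+iq_2$.

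For part~(1): from the explicit values of $\mu,\upsilon$ and the range of $k$ one checks $\upsilon^2<k(t)^2<\mu^2$ for all $t$, so the integrands of $\Theta_1,\Theta_2$ keep a constant sign, both are strictly monotone, and $\Theta_j(t+\mathbf{T})=\Theta_j(t)+2\pi q_j$. Let $R_1(\theta)$ be the one-parameter group of M\"obius transformations of $\R^3\cup\{\infty\}$ fixing $\mathcal{C}$ pointwise and $R_2(\psi)$ the rotations about the $z$-axis; these commute, generate a torus $T^2\subset G$, and in the $(w_1,w_2)$-model act by $(w_1,w_2)\mapsto(w_1,e^{i\theta}w_2)$ and $(w_1,w_2)\mapsto(e^{i\psi}w_1,w_2)$. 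Using the formulas of Theorem~\ref{thm:B} together with $k(t+\mathbf{T})=k(t)$ and the additivity of $\Theta_j$, I get
\[
\gamma_q(t+\mathbf{T})=\Phi\big(\gamma_q(t)\big),\qquad \Phi:=R_1(2\pi q_1)\,R_2(2\pi q_2)\in G .
\]
Hence $\langle\Phi\rangle\subseteq G_{\gamma_q}$, and $\Phi^j=R_1(2\pi jq_1)R_2(2\pi jq_2)$ is the identity iff $jq_1,jq_2\in\Z$, i.e.\ iff $n_1\mid j$ and $n_2\mid j$ (here coprimality of $(m_i,n_i)$ is used), i.e.\ iff $n\mid j$; so $\langle\Phi\rangle$ is cyclic of order exactly $n$, and $\gamma_q$ is closed with least period $n\mathbf{T}$, consistently with Theorem~\ref{thm:A}. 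To see that $G_{\gamma_q}=\langle\Phi\rangle$, observe that a conformal symmetry of the (oriented) trajectory preserves the conformal arclength and its orientation, hence, written in the conformal-arclength parametrisation, is $\gamma_q(\zeta)\mapsto\gamma_q(\zeta+c)$ for some $c$; by the rigidity of generic curves (Section~\ref{s:1}) such a $c$ occurs iff it is a common period of the conformal curvatures of $\gamma_q$, which (being algebraic expressions in $k$ and its derivatives, see \cite{M1}) have fundamental period equal to that of $k$, i.e.\ the conformal length $L/n$ of a single arc $\gamma_q|_{[0,\mathbf{T}]}$ (each of the $n$ arcs being $\Phi$-congruent). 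Thus the admissible $c$ form exactly $\tfrac{L}{n}\Z/L\Z$, the corresponding symmetry being $\Phi$. This proves~(1).

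For part~(2): since $x^2+y^2=2\mu^2(k^2-\upsilon^2)/r^2$ never vanishes, $r(t)>0$ throughout, and $x^2+y^2\neq 2$ whenever $z=0$, the curve $\gamma_q$ is a smooth closed curve in $\R^3$ disjoint from $\mathcal{C}$ and from the $z$-axis, whose union in $S^3$ is the Hopf link $\{w_2=0\}\cup\{w_1=0\}$. The linking number of a curve with a Hopf circle equals the winding of the argument of the complementary $\C$-coordinate across a meridian disk of that circle; since $\Theta_1$ is monotone these intersections all count with the same sign, so, choosing orientations of $\mathcal{C}$ and of the $z$-axis making the numbers positive,
\[
\mathrm{lk}(\gamma_q,\mathcal{C})=\mathrm{lk}_{S^3}\big(\widetilde{\gamma}_q,\{w_2=0\}\big)=\frac1{2\pi}\oint_{\gamma_q}d\Theta_1=\frac1{2\pi}\cdot n\cdot 2\pi q_1=nq_1=m_1h_1,
\]
where part~(1) was used to see that the closed curve consists of $n$ periods of $k$, and where $nq_1=n\,m_1/n_1=m_1h_1$ because $h_1=n/n_1$. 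The same computation with $\{w_1=0\}$ and $\Theta_2$ gives $\mathrm{lk}(\gamma_q,z\text{-axis})=nq_2=m_2h_2$. This proves~(2).

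The steps I expect to be most delicate are: (i) the passage to $S^3$ — pinning down the stereographic projection and its normalisation so that $\{w_2=0\}$ maps exactly to $\mathcal{C}=\{(x,y,0):x^2+y^2=2\}$, and tracking orientations and signs consistently, which is where most of the bookkeeping lives; (ii) in part~(1), verifying that the conformal curvatures of $\gamma_q$ have fundamental period exactly $L/n$ and not a proper divisor of it, which requires their explicit form from \cite{M1}; and (iii) the disjointness of $\gamma_q$ from $\mathcal{C}$ and from the $z$-axis in full generality, which is exactly the point where the defining inequalities of $\Omega$ are needed, so that one simultaneously sees that the rational points of $\Omega$ are precisely the moduli for which the symmetrical configuration links the two axes as stated.
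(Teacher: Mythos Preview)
Your proposal is correct and shares the paper's core strategy: for part~(1), identify the monodromy as the element $R(2\pi q_2,2\pi q_1)\in K$ and show it has order $n$; for part~(2), reduce each linking number to the total increment of the corresponding $\Theta_j$ over the closed curve, namely $n\cdot 2\pi q_j$. The execution differs in two places. For part~(1), the paper does not re-argue why the full symmetry group equals the monodromy group; it simply invokes the fact, recorded in Section~\ref{ss:1.2}, that a real-analytic closed curve with positive chirality satisfies $G_\gamma=\widehat G_\gamma$. Your shift-in-conformal-arclength argument is essentially a proof of that fact, so you are redoing work the paper has already packaged. For part~(2), rather than lifting to $S^3$ and using the Hopf picture, the paper computes the Gauss linking integral with the $z$-axis directly (the inner $s$-integral collapses, leaving $\tfrac{1}{2\pi}\int\Theta_2'$), and for the Clifford circle it writes down an explicit orientation-preserving conformal involution $\Psi$ of $\R^3$ exchanging $\mathcal{C}$ with the $z$-axis, observes that $\Psi\circ\gamma_q$ has the same form with the roles of $(\Theta_1,\mu)$ and $(\Theta_2,\upsilon)$ swapped, and reduces to the previous computation. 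Your $S^3$ approach achieves the same result more symmetrically and with less calculation, at the price of the stereographic bookkeeping you flag in~(i). Two small corrections: since $\Theta_2'=\upsilon/(\upsilon^2-k^2)<0$, the function $\Theta_2$ \emph{decreases} by $2\pi q_2$ per period (your orientation caveat absorbs this, but it should be stated correctly); and $ab>1$ forces $b>0$, so only the $\dn$-branch of $k$ occurs and $\upsilon^2<b\le k^2\le a<\mu^2$ gives the disjointness in~(iii) immediately, without appealing to the inequalities defining $\Omega$.
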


An important consequence of the previous results is that the conformal shape of a string
is uniquely determined by three phenomenological invariants: the order of its symmetry
group and its linking numbers with the two axes of the symmetry group.
The explicit construction of a string from the phenomenological
invariants requires the inversion of the period map of $\mathcal L$ (cf. Section \ref{s:2}).
In this respect, some numerical experiments carried out with the software {\sl Mathematica}
suggest that conformal strings are simple curves (cf. Section \ref{s:6}).
However, a rigorous proof of this fact is still missing.
Another interesting problem is to find an estimate for the asymptotic growth of $\varrho(n)$,
the cardinality  of the set of M\"obius classes of strings with symmetry order $n$.
Numerical experiments suggest a quadratic growth of $\varrho(n)$.

\vskip0.1cm

The theorems above have a conceptual explanation within the general scheme of
Griffiths' formalism of the calculus of variations \cite{GM,Gr,MNforum,MN-CCG,MN-SICON,MN-JMIV,MN}.
Using Griffiths' formalism, the {\it momentum space} of the variational problem can
be identified with $Y=G_+\times \mathcal{A}$, where $G_+$ is the identity
component of the M\"obius group of $\R^3$ and $\mathcal{A}$ is a 3-dimensional
submanifold of $\mathfrak{g}^*$, the dual of the Lie algebra of $G_+$. Moreover,
the restriction $\xi\in \Omega^1(Y)$ of the Liouville form of $T^*(G_+)$ defines
an invariant contact structure on $Y$. By choosing a suitable set
of coordinates on $\mathcal{A}$, say $p_1$, $p_2$ and $p_3$, the
characteristic curves of the contact form are given by
\[
  t\in \R\mapsto (\mathrm{F}(t),p_1(t),p_2(t),p_3(t))\in Y,
   \]
where $\mathrm{F}$ is the canonical lift of a stationary curve $\gamma$,
$p_1$ and $p_2$ are the conformal curvatures, and $p_3=p_1'$. From a
theoretical point of view, the study of the variational problem is equivalent to
that of the dynamical system defined by the characteristic vector field of the
contact form $\xi$. One can easily see that the contact momentum
map
is given by
\[
 J:(\mathrm{F},p)\in Y\mapsto \mathrm{Ad}^{*}(F)(\xi|_{(\mathrm{F},p)})\in \mathfrak{g}^*.
 \]
(For the construction of the momentum
map in contact geometry, see for instance \cite{OR}.)
Moreover, the action of $G_+$ on $Y$ is Hamiltonian and coisotropic, and the characteristic
vector field is {\it collective completely integrable} \cite{FT,GS2,J}.\footnote{Here,
we adopt the
terminology used in \cite{GS2}. In the literature, the term
non-commutative completely integrable systems is also used.} Consequently,
the flow can be linearized on the fibers of the momentum map and its trajectories
can be found by quadratures (see \cite{GM} for a general description of the
integration procedure).
Theorems \ref{thm:A} and \ref{thm:C} say
that the contact dynamical system is quantizable, at least in the sense of
``the old quantum theory'' (Bohr's atom theory) \cite{Messiah}, and that the quantum numbers
of the closed (quantizable) trajectories have a precise geometric meaning.

\vskip0.1cm

The paper is organized as follows.
Section \ref{s:1} collects some basic facts about the conformal geometry of space curves.
Section \ref{s:2} recalls the Euler--Lagrange equations of the variational problem and
discusses the example of closed critical curves
with constant conformal curvatures.
Then, after introducing the natural parameters of a critical
curve with non-constant periodic conformal curvatures,
the period map of the functional $\mathcal L$ is defined, and
conformal strings are characterized in terms of the natural parameters,
via the period map.
Section \ref{s:3} deals with a technical result
about the period map, from which Theorem \ref{thm:A} follows directly.
Section \ref{s:4} proves Theorem \ref{thm:B}, while Section \ref{s:5}
proves Theorem \ref{thm:C}. Section \ref{s:6} discusses some examples.

\vskip0.1cm
Numerical and symbolic computations, as well as graphics, are made with the software
{\sl Mathematica}.
As basic references for the theory of elliptic functions and integrals we use the
monographs \cite{By,La} (see also \cite{W}).
For the few notions of knot theory used in the paper we refer to \cite{Ma}.
A general reference for M\"obius geometry is \cite{H},
to which we refer for an updated list of modern and classical references to the subject
(see also \cite{JMNbook}).
The main results of the paper were previously announced in
\cite{M2}.

\vskip0.1cm
(Added in proof) Since acceptance of the manuscript, the paper \cite{DMNna} has appeared, which
studies the local and global conformal invariants of timelike curves in the (1+2)-Einstein universe
and addresses the question of existence and properties of closed trajectories for the conformal strain
functional.

\subsubsection*{Acknowledgments} The authors would like to thank the
referees for their valuable comments and suggestions.

\section{Preliminaries}\label{s:1}

\subsection{The conformal group}\label{ss:1.1}

Let $\R^{4,1}$ denote $\R^5$ with the Lorentz scalar product
\begin{equation}\label{1.1.1}
 (v,w)=-(v^0w^4+v^4w^0)+\sum_{j=1}^{3} v^jw^j = \sum_{a,b=0}^{4}g_{ab}v^aw^b,\quad g_{ab}=g_{ba},
  \end{equation}
where $v=(v^0,\dots,v^4)$, and with the space and time orientations defined, respectively,
by the volume form
$dv^0\wedge\cdots\wedge dv^4$ and the positive light cone
\begin{equation}\label{1.1.2}
  \mathcal{L}_+=\{v \in \R^{4,1} \,: \,(v,v)=0,\, v^0+v^4>0\}.
   \end{equation}
The {\it M\"obius space} $\mathcal{M}_3$ is the projectivization of $\mathcal{L}_+$,
endowed with the oriented conformal structure induced by the scalar product and the space
and time orientations. If $(e_0,\dots,e_4)$ is the standard basis of $\R^{4,1}$, the map
\begin{equation}\label{1.1.3}
  \mathcal{J}:x=(x^1,x^2,x^3)\in \R^3\mapsto \left[\frac{\langle x, x\rangle}{2}e_0
   +\sum_{j=1}^{3}x^je_j+e_4\right]\in \mathcal{M}_3.
    \end{equation}
is an orientation-preserving conformal diffeomorphism of $\R^3$ onto the M\"obius space
minus the point $P_{\infty}=[e_0]$. The inverse of $\mathcal{J}$ is the conformal projection
\begin{equation}\label{1.1.4}
 \mathcal{P}:\left[\sum_{a=0}^{4}v^ae_a\right]\in
   \mathcal{M}_3\setminus\{P_{\infty}\}\mapsto \frac{1}{v^4}(v^1,v^2,v^3)\in \R^3.
     \end{equation}
The {\it M\"obius group} $G$ consists of all pseudo-orthogonal transformations
preserving the volume form. It is a 10-dimensional Lie group with two connected
components. The first component is the subgroup $G_+$ consisting of all $\mathrm{F}\in G$
preserving the positive light cone and the second one consists of all $\mathrm{F}\in G$
switching the positive light cone with the negative one. The group $G$ acts effectively
and transitively on the left of $\mathcal{M}_3$ preserving the conformal structure.
The classical Liouville theorem \cite{DNF} asserts that every conformal automorphism
of $\mathcal{M}_3$ is induced by a unique element of $G$. Consequently, the M\"obius
group can be viewed as the pseudo-group of all conformal transformations of
Euclidean 3-space. The orientation-preserving conformal transformations are induced
by the elements of $G_+$, while the conformal transformations induced the elements
of $G_-$ are orientation-reversing. For each $\mathrm{F}\in G_+$,
 we denote by $\mathrm{F}_0,\dots,\mathrm{F}_4$ its column vectors. Then,
 $(\mathrm{F}_0,\dots,\mathrm{F}_4)$
is a {\it positive light cone} basis of $\R^{4,1}$, that is a positive-oriented basis such that
\[
  (\mathrm{F}_a,\mathrm{F}_b)=g_{ab},\quad  \mathrm{F}_0,\mathrm{F}_4\in \mathcal{L}_+ ,
     \quad a,b=0,\dots,4.
    \]
Conversely, if $(\mathrm{F}_0,\dots,\mathrm{F}_4)$ is a positive light-cone basis,
then the matrix $\mathrm{F}$ with column vectors $\mathrm{F}_0,\dots,\mathrm{F}_4$ i
s an element of $G_+$. The Lie algebra of $G$ consists of all skew-adjoint matrices of the scalar product \eqref{1.1.1}, that is
\[
  \mathfrak{g}=\left\{\mathrm{X}\in \mathfrak{gl}(5,\R) \,:\, ^t\mathrm{X}  g + g  \mathrm{X}=0
  \right\},
  \]
where $g=(g_{ab})$. The maximal compact abelian subgroups of $G$ are conjugate to the 2-dimensional torus
\begin{equation}\label{1.1.5}
T =\left\{R(\phi_1,\phi_2)\,:\, \phi_1,\phi_2\in [0,2\pi)\right\}\cong SO(2)\times SO(2),
  \end{equation}
where
\begin{equation}\label{1.1.6}
   R(\phi_1,\phi_2)=\left(
   \begin{smallmatrix}
                   \frac{1+\cos \phi_2}{2} & 0 & 0 & -\frac{\sin\phi_2}{\sqrt{2}} & \frac{1-\cos\phi_2}{2} \\
                   0 & \cos\phi_1 & -\sin\phi_1 & 0 & 0 \\
                   0 & \sin\phi_1 & \cos\phi_1 & 0 & 0 \\
                   \frac{\sin\phi_2}{\sqrt{2}} & 0 & 0 & \cos\phi_2 & -\frac{\sin\phi_2}{\sqrt{2}} \\
                   \frac{1-\cos\phi_2}{2} & 0 & 0 & \frac{\sin\phi_2}{\sqrt{2}} & \frac{1+\cos\phi_2}{2} \\
                 \end{smallmatrix}
               \right).
  \end{equation}
Note that $R(\phi_1,\phi_2)$ is the composition of the Euclidean rotation
of angle $\phi_1$ around the $z$-axis with the {\it toroidal rotation}
of angle $\phi_2$ around the {\it Clifford circle}
$\mathcal{C}=\{(x,y,0) : x^2+y^2=2\}$. The $z$-axis and the Clifford circle are
the {\it rotational axes} of $T$. The rotational axes of any other
maximal torus $\mathrm{F} T  \mathrm{F}^{-1}$
 are the images under
$\mathrm{F}$ of the axes of $T$.

\subsection{M\"obius geometry of space curves}\label{ss:1.2}

Let $\gamma : I\subset \R \to \R^3$ be a smooth curve parametrized
by arclength $s$, $I$ an open interval.
Points where the infinitesimal conformal arclength $\eta_\gamma$ (cf.~\eqref{inf-conf-par})
vanishes are called {\it vertices} of $\gamma$ (cf. \cite{CSW,LO2010}).
Generic curves, i.e., without vertices, can be parametrized by the {\it conformal arc\-length}
parameter $\zeta$, defined (up to a constant) by $d\zeta=\eta_\gamma$.
If such a conformal parametrization is defined
for every $\zeta\in \R$, the curve is said {\it complete}.
A {\it frame field} along $\gamma$ is a smooth map $\mathrm{F} :I\to G_+$, such
that $\mathcal{P}\circ \mathrm{F}_4=\gamma$.
We have the following.

\begin{prop}[cf. \cite{CSW,F,MMR,M1,S}]\label{P1.2.1}
For any oriented generic curve $\gamma : I\to \R^3$, there is a unique
frame field $\mathrm{F}:I\to G_+$ along $\gamma$, the {\em Vessiot frame}, such that
\begin{equation}\label{1.2.3}
  \mathrm{F}^{-1}d\mathrm{F}=
     \left(
     \begin{smallmatrix}
                          0 & 1 & 0 & 0 & 0 \\
                          k_2 & 0 & 0 & 0 & 1 \\
                          1 & 0 & 0 & k_1 & 0 \\
                          0 & 0 & -k_1 & 0 & 0 \\
                          0 & k_2 & 1 & 0 & 0 \\
      \end{smallmatrix}
       \right)\eta_\gamma,
    \end{equation}
where $k_1$, $k_2$ are smooth functions, called the {\em conformal curvatures}.
We call $\Gamma = \mathrm{F}_4:I\to \mathcal{L}_+$ the {\em canonical null lift} of $\gamma$.
\end{prop}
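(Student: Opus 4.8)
The plan is to run Cartan's method of moving frames: start from an arbitrary frame field along $\gamma$, successively normalize the components of the Maurer--Cartan form by using up the freedom in the choice of frame, and invoke the genericity hypothesis at the one decisive step, which also pins down the parametrization.

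First I would produce \emph{some} frame field. The lift of $\gamma$ determined by the bracketed expression in \eqref{1.1.3} is a smooth positive null lift, and — $I$ being an interval — it can be completed to a smooth map $\mathrm{F}^{(0)}:I\to G_+$ whose last column projects to $\gamma$, i.e. $\mathcal{P}\circ\mathrm{F}^{(0)}_4=\gamma$. Any other such frame field is $\mathrm{F}=\mathrm{F}^{(0)}B$ with $B:I\to P$, where $P\subset G_+$ is the $7$-dimensional parabolic subgroup stabilizing the null line $[e_4]\in\mathcal{M}_3$; its Lie algebra carries the grading $\mathfrak{g}=\mathfrak{g}_{-1}\oplus\mathfrak{g}_0\oplus\mathfrak{g}_1$ associated with the conformal structure of $\mathcal{M}_3\cong S^3$, where $\mathfrak{g}_{-1}\cong\R^3$, $\mathfrak{g}_0\cong\mathfrak{so}(3)\oplus\R$, $\mathfrak{g}_1\cong(\R^3)^*$, and $\mathfrak{p}=\mathfrak{g}_0\oplus\mathfrak{g}_1$. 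Under $\mathrm{F}\mapsto\mathrm{F}B$ one has $\mathrm{F}^{-1}d\mathrm{F}\mapsto\mathrm{Ad}(B^{-1})(\mathrm{F}^{-1}d\mathrm{F})+B^{-1}dB$; in particular the $\mathfrak{g}_{-1}$-component $\theta$ of $\mathrm{F}^{-1}d\mathrm{F}$ is semibasic (a triple of $1$-forms on $I$ recording the motion of the null lift modulo $[e_4]$), it transforms tensorially under the $\mathfrak{so}(3)\oplus\R$-factor of $B$, and it is unchanged by the $\mathfrak{g}_1$-factor.

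The successive reductions then proceed as follows. Because $\gamma$ is immersed, $\theta$ never vanishes; using the $\mathfrak{so}(3)$-action I would align it with a fixed direction, $\theta=\omega\,\mathrm{e}$ with $\omega$ a nowhere-zero $1$-form and $\mathrm{e}\in\mathfrak{g}_{-1}$ fixed, which reduces the structure group to the stabilizer of $[\mathrm{e}]$ (the dilation factor, the residual $SO(2)\subset SO(3)$, and the abelian group $\exp\mathfrak{g}_1$). Differentiating the structure equation $d(\mathrm{F}^{-1}d\mathrm{F})=-\tfrac12[\mathrm{F}^{-1}d\mathrm{F},\mathrm{F}^{-1}d\mathrm{F}]$ and inspecting the next orders produces the higher osculating data of the flag $\langle\mathrm{F}_4\rangle\subset\langle\mathrm{F}_4,\mathrm{F}_1\rangle\subset\langle\mathrm{F}_4,\mathrm{F}_1,\mathrm{F}_0\rangle\subset\langle\mathrm{F}_4,\mathrm{F}_1,\mathrm{F}_0,\mathrm{F}_2\rangle$; after normalizing part of the $\mathfrak{g}_1$-entries to zero one finds a $1$-form that transforms homogeneously under the remaining dilation, and equating it to $\omega$ fixes the dilation factor. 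This is the only place the genericity hypothesis enters: unwinding, $\omega$ becomes a fourth-order differential expression in $\gamma$ which is a nonzero multiple of $\bigl(\langle\dddot\gamma,\dddot\gamma\rangle-\langle\ddot\gamma,\ddot\gamma\rangle^2\bigr)^{1/4}ds=\eta_\gamma$ precisely when $\gamma$ has no vertices, so the normalization $\omega=\eta_\gamma$ is available exactly for generic curves and it simultaneously fixes the conformal arclength parametrization. The still-available freedom — the residual $SO(2)$ rotating the conformal normal directions and the abelian group $\exp\mathfrak{g}_1\cong(\R^3)^*$ — enters the remaining inessential entries of $\mathrm{F}^{-1}d\mathrm{F}$ affinely with invertible linear part, so there is a unique choice of these parameters annihilating those entries. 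The entries that cannot be normalized away are then exactly two functions, which I would name $k_1$ and $k_2$; matching the fully normalized Maurer--Cartan form against the model matrix, whose zero pattern is forced by the accumulated normalizations ($d\mathrm{F}_4=\mathrm{F}_1\,\eta_\gamma$ from the parametrization, and the skew-adjointness $\,{}^t\!\Phi\, g+g\,\Phi=0$ of $\Phi=\mathrm{F}^{-1}d\mathrm{F}$ filling in the rest), gives \eqref{1.2.3}. Setting $\Gamma:=\mathrm{F}_4$ finishes existence, and uniqueness follows because every one of the reduction steps admitted a unique solution, so a frame satisfying all the conditions encoded in \eqref{1.2.3} is determined pointwise.

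The main obstacle is organizational rather than conceptual: keeping the reductions in the right order inside $\mathfrak{so}(4,1)$, and checking at the critical step that the invariant which drops out of the third- and fourth-order frame reduction is genuinely $\eta_\gamma$ — equivalently, that $\langle\dddot\gamma,\dddot\gamma\rangle-\langle\ddot\gamma,\ddot\gamma\rangle^2$ is exactly the non-degeneracy condition that makes all the normalizations go through. The underlying bookkeeping with the Lorentz product and the repeated use of the structure equations at each stage is the laborious but routine part.
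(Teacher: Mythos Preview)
The paper does not actually prove this proposition: it is quoted from the literature (hence the ``cf.'' and the bracket of references), and the only trace of an argument is the subsequent Remark, which says that the construction of $\mathrm{F}$ ``is explicit and only involves derivatives and simplification of $\mathrm{F}^{-1}d\mathrm{F}$ by linear relations on entries.'' Your outline via Cartan's method of moving frames is precisely the approach taken in the cited sources (Sulanke, Fialkow, Cairns--Sharpe--Webb, Musso), so your proposal agrees with the intended proof rather than offering an alternative.

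One small imprecision worth flagging: the residual $SO(2)$ that rotates the conformal normal plane does not act \emph{affinely} on the remaining entries of the Maurer--Cartan form but by a genuine rotation of a $2$-vector of entries; pinning it down therefore needs that $2$-vector to be nonzero, and unwinding shows this nonvanishing is again equivalent to $\eta_\gamma\neq 0$. So the genericity hypothesis is used twice (once for the dilation normalization that produces the conformal arclength, once for the $SO(2)$ normalization that singles out $\mathrm{F}_2$), not once as your sketch suggests. With that adjustment the outline is sound.
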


\begin{remark}
If $\gamma$ is biregular and $T = \dot\gamma$, $N$, $B$ is its Frenet frame,
with Frenet--Serret equations
$\dot T = \kappa N$, $\dot N = -\kappa T + \tau B$, $\dot B = -\tau N$,
we have ${\dddot\gamma}= -\kappa^2 T +\dot \kappa N  + \kappa\tau B$ and
then $\eta_\gamma = \sqrt[4]{\dot{\kappa}^2+\kappa^2\tau^2}\, ds$ (cf. \cite{S,T}).
The conformal curvatures take the form
\begin{equation}\label{1.2.2}
  k_1=r^5\left(\kappa^2\tau^3+\kappa \dot{\kappa}\dot{\tau}
    +\tau(2\dot{\kappa}^2-\kappa \ddot{\kappa}) \right),\quad
   k_2 =\frac{1}{2}\left(\dot{r}^2 -2r\ddot{r}-r^2\kappa^2 \right),
    \end{equation}
where $r = (\dot{\kappa}^2+\kappa^2\tau^2)^{-1/4}$ (cf. \cite{S}).
For a geometric description of vertices and the conformal arclength via the
osculating circles of $\gamma$ we refer to \cite{LO2010,Mon}.
\end{remark}

\begin{remark}
The construction of $\mathrm F$ is explicit and only involves derivatives
and simplification of $\mathrm{F}^{-1}d\mathrm{F}$ by linear relations on entries.
If $\gamma : \R\to \R^3$ is a
periodic parametrization of a closed curve, $\mathrm F$ is a periodic $G_+$-valued map.
The value of $\mathrm F$ at $t\in I$ depends on the fourth order
jet of $\gamma$ at $t$.
If $\eta_\gamma = d\zeta$, any solution of the linear
system \eqref{1.2.3} is the Vessiot frame of the conformal parametrization
$\gamma=\mathcal{P}\circ [\mathrm{F}_4]$ of a generic curve with curvatures $k_1$
and $k_2$. If $\mathrm{F}$, $\widetilde{\mathrm{F}}:\R\to G_+$
are two solutions of \eqref{1.2.3}, with initial conditions in
$G_+$, there is a unique $A\in G_+$, such that $\widetilde{\mathrm{F}}=A \mathrm{F}$.
This shows that the conformal curvatures determine the curve, up to an
orientation-preserving conformal transformation.

A curve $\gamma :\R\to \R^3$
 is called {\it chiral} if its symmetry group $G_\gamma$ is contained in $G_+$.
\end{remark}

\begin{defn}\label{def:monodromy-chirality}
A generic curve $\gamma: \R \to \R^3$ parametrized by conformal arclength
is said {\it quasi-periodic} if its conformal curvatures
are non constant, periodic,
with a common minimal period $\omega >0$;
$\omega$ is called the {\it conformal wavelength} of $\gamma$.

The {\it monodromy} of a quasi-periodic curve is the element
$M := \mathrm{F}(\omega)\mathrm{F}(0)^{-1}\in G_+$. By construction, $M$ satisfies
\begin{equation}
  \mathrm{F}(t+p \omega)= M^p  \mathrm{F}(t),
    \quad \forall t\in \R,\quad \forall p\in \mathbb{Z}.
      \end{equation}
Therefore, $M$ generates a subgroup $\widehat{G}_{\gamma}\subset G_{\gamma}$, the
{\it monodromy group} of $\gamma$. If $\gamma$ is closed,
the integral of $\eta_\gamma$ along $\gamma$ is $n\omega$, where $n$ is the
cardinality of $\widehat{G}_{\gamma}$.

If $A$ is an orientation-reversing conformal transformation, the conformal curvatures
of $A\cdot \gamma$ are $-k_1$ and $k_2$, respectively. Thus, generic curves whose first
curvature is nowhere vanishing are chiral.
In this case, we assume that $\gamma$ has
{\it positive chirality} (i.e., $k_1>0$). If $\gamma$ is a real-analytic curve with
positive chirality, then $\widehat{G}_{\gamma}=G_{\gamma}$ and $\gamma$ is closed if
and only if $M$ has finite order. Note that the symmetry index of a real-analytic
closed curve with positive chirality coincides with the order of the monodromy.
\end{defn}

\section{Critical curves}\label{s:2}

\subsection{The Euler-Lagrange equations}\label{2.1}
The critical curves of the functional
$\mathcal{L}$ \eqref{var-pbm} are characterized by the Euler--Lagrange equations \cite{M1}
\begin{equation}\label{2.1.1}
  k_1''=2k_1(C_1-k_1^2),\quad k_2=-\frac{3}{2}k_1^2+C_1,
   \end{equation}
where $C_1$ is a constant of integration and $k_1''$ is the second order derivative of $k_1$
with respect to the conformal arclength $\zeta$.

\begin{ex}[Critical curves with constant conformal curvatures]\label{e:1}

Let $\gamma$ be a critical curve with constant conformal curvatures. Then,
either $k_1=0$ and $k_2\in \R$, or $k_2=-k_1^2/2$ and $k_1\in \R\setminus\{0\}$.
In the second case, we may assume $k_1>0$. The class of curves with constant
conformal curvatures was studied in \cite{S}: they are equivalent
to the rhumb lines of either a torus of revolution, or a round cone,
or else a circular cylinder. Since we deal with curves without
vertices, the meridians and the parallels must be excluded from the discussion.
The rhumb lines of a round cone, which possibly can degenerate into the
punctured plane, are helices over logarithmic spirals, while the rhumb lines of
a circular cylinder are circular helices. All of them are not closed. From the
viewpoint of the conformal geometry, any rotationally invariant
torus\footnote{And, more generally, every compact Dupin cyclide.} is equivalent
to a torus $\mathcal{T}_r$ generated by rotating around the $z$-axis the
circle in the $xz$-plane with radius $(2-r^2)/2r$ and center
$((r^2+1)/2r,0,0)$, for some $r\in (0,\sqrt{2})$. The latter are the regular
orbits of the action of the group $T$ (cf. \eqref{1.1.5}) on the Euclidean space.
If
\[
\begin{cases}
 x_r(\theta,\phi)=\frac{4r\cos\theta}{2+r^2+(2-r^2)\cos\phi},\\
  y_r(\theta,\phi)=\frac{4r\sin\theta}{2+r^2+(2-r^2)\cos\phi},\\
  z_r(\theta,\phi)=\frac{\sqrt{2}(r^2-2)\sin\phi}{2+r^2+(2-r^2)\cos\phi}
   \end{cases}
\]
are parametric equations of $\mathcal{T}_r$, its rhumb lines are
\[
 \alpha_{r,m,n}: t\in \R \mapsto (x_r(nt,mt),y_r(nt,mt),z_r(nt,mt))\in \R^3,
  \]
where $m,n\in \R$ and $mn\neq 0,n\neq \pm m$. The conformal curvatures of $\alpha_{r,m,n}$ are
\[
 \begin{cases}
  k_1=-\frac{(2+r^2)mn}{\sqrt[4]{8m^2n^2(m^2-n^2)^2r^2(2-r^2)^2}},\\
  k_2=\frac{(8n^4r^2+m^4(r^2-2)^2)\mid mn(m^2-n^2)\mid}{4\sqrt{2}rm^2n^2(m^2-n^2)(r^2-2)}.
  \end{cases}
   \]
They verify the inequality $2k_1^2 k_2<-1$. Conversely, every curve with constant
conformal curvatures that satisfies the above inequality is equivalent to a rhumb
line of $\mathcal{T}_r$, for some $r\in (0,\sqrt{2})$. Possibly, $r$ can be computed
in terms of the constants $k_1$ and $k_2$. From this it follows that, up to conformal
transformations, the only closed critical curves with constant conformal curvatures
are of the form $\alpha_{r(q),q,1}$, where $q$ is a positive rational number different
from $1$ and
\[
  r(q)=\frac{\sqrt{2}}{q}\sqrt{2+q^2-2\sqrt{1+q^2}}.
  \]
The trajectory of  $\alpha_{r(q),q,1}$ is a torus knot of type $(m,n)$, where $m,n$ are
coprime integers such that $m/n=q$, see Figure \ref{FIG1}.
\end{ex}

\begin{figure}[ht]
\begin{center}
\includegraphics[height=6cm,width=6cm]{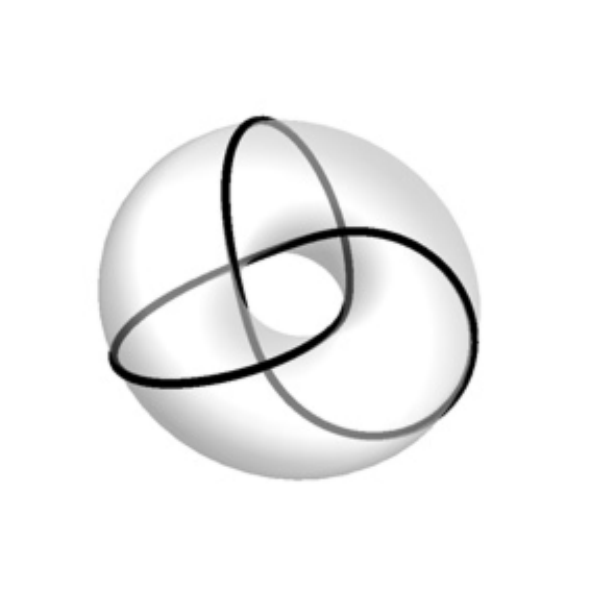}
\includegraphics[height=6cm,width=6cm]{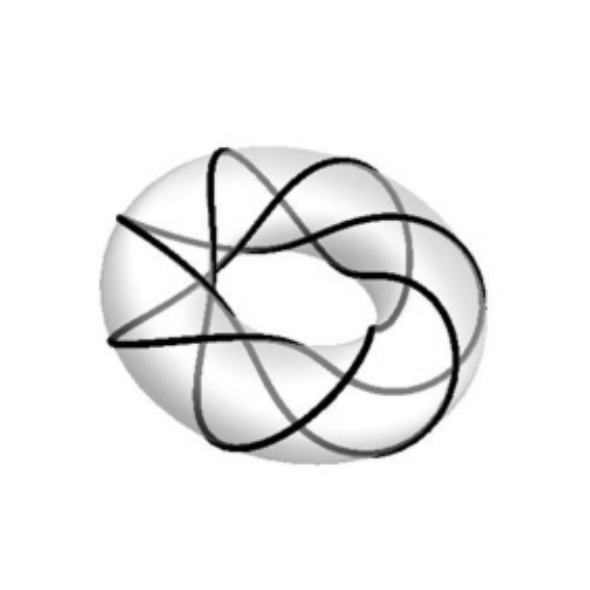}
\caption{Closed critical curves with constant conformal curvatures and torus
knots of type $(3,2)$ and $(7,3)$, respectively.}\label{FIG1}
\end{center}
\end{figure}

 From now on we consider critical curves with non-constant conformal curvatures,
parametrized by the conformal arclength parameter. Then, \eqref{2.1.1} implies
\begin{equation}\label{2.1.2}
 (k_1')^2+k_1^4-2C_1k_1^2=C_2,
   \end{equation}
where $C_2$ is another constant of integration. This equation has non-constant
periodic solutions if and only if the polynomial
$
  P(t,C_1,C_2)=t^2-2C_1t-C_2
   $
has two distinct real roots, denoted by $a,b$ and ordered in such a way that
$a>b$, with $a>0$ and $b\neq 0$. Then \eqref{2.1.1} can be rewritten in the form
\begin{equation}\label{2.1.3}
 (k_1')^2+(k_1^2-a)(k_1^2-b)=0,\quad k_2 = -\frac{3}{2}k_1^2+\frac{a+b}{2}.
  \end{equation}
The general solution of \eqref{2.1.3} is given by
\begin{equation}\label{2.1.4}
 \begin{aligned}
  k_1(t)=\epsilon_1 k_{(a,b)}(\epsilon_2 t+h),\quad k_2(t)=-\frac{3}{2}k_1(t)^2+\frac{a+b}{2},
  \end{aligned}
   \end{equation}
where $\epsilon_1,\epsilon_2 =\pm 1$, $h$ is any real constant, and $k_{(a,b)}$
is the Jacobi elliptic function
\begin{equation}\label{2.1.5}
 k_{(a,b)}(t)=
   \begin{cases}
     \sqrt{a}\cn\big(\sqrt{a-b} t,\frac{a}{a-b}\big),\quad  b<0,\\
     \sqrt{a}\dn\big(\sqrt{a} t,\frac{a-b}{a}\big), \quad\quad\,\,\,  b>0.
     \end{cases}
   \end{equation}
By possibly reversing the orientation along the curve, acting with an
orientation-reversing conformal transformation and shifting the independent
variable, we can assume that $\epsilon_1=\epsilon_2=1$ and $h=0$. We have
proved the following.

\begin{prop}\label{prop1}
The M\"obius classes of quasi-periodic critical curves are in 1-1 correspondence
with the points $(a,b)$ of the planar region
\begin{equation}\label{2.1.6}
 S=\left\{(a,b) \,:\, a>0,\, a>b,\, b\neq 0 \right\}.
  \end{equation}
 We call $a$ and $b$ the {\em parameters} of a quasi-periodic critical curve.
 \end{prop}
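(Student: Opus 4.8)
The plan is to establish the bijection in Proposition~\ref{prop1} by unpacking the normalizations made just before the statement. First I would show that every M\"obius class of quasi-periodic critical curves determines a well-defined point $(a,b)\in S$. Starting from a quasi-periodic critical curve $\gamma$, its conformal curvature $k_1$ satisfies the first-order ODE~\eqref{2.1.2} with constants $C_1,C_2$ determined (via the Euler--Lagrange equations~\eqref{2.1.1}) by the $4$-jet of $\gamma$ at any point; since $k_1$ is non-constant and periodic, the argument given before the proposition forces the polynomial $P(t,C_1,C_2)=t^2-2C_1t-C_2$ to have two distinct real roots $a>b$, and the sign constraints $a>0$, $b\neq 0$ (a double root or $a\le 0$ would make all solutions of~\eqref{2.1.3} constant or non-real, and $b=0$ would give a non-periodic $\sech$-type solution) pin the roots down as a function of the pair $(C_1,C_2)$. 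Then I would check that $(a,b)$ is a genuine M\"obius invariant: the conformal curvatures $(k_1,k_2)$ are M\"obius invariants up to the sign change $k_1\mapsto -k_1$ under orientation-reversing transformations and the shift $t\mapsto t+h$ under reparametrization by conformal arclength, and none of these operations change $C_1=\tfrac12(a+b)$ or $C_2=-ab$, hence none change the unordered-then-ordered pair $\{a,b\}$. So $\gamma\mapsto(a,b)$ descends to a well-defined map from M\"obius classes to $S$.

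Next I would prove surjectivity: given any $(a,b)\in S$, the function $k_1:=k_{(a,b)}$ of~\eqref{2.1.5} is a non-constant periodic solution of~\eqref{2.1.3} (a routine check that $\cn$ with modulus-squared $a/(a-b)\in(0,1)$ when $b<0$, resp. $\dn$ with modulus-squared $(a-b)/a\in(0,1)$ when $b>0$, solves $(k_1')^2=-(k_1^2-a)(k_1^2-b)$), and setting $k_2:=-\tfrac32k_1^2+\tfrac{a+b}{2}$ gives a solution of the Euler--Lagrange system. By the discussion in Section~\ref{ss:1.2} (the Vessiot structure equations~\eqref{1.2.3} read as a linear ODE for $\mathrm{F}$ with $\eta_\gamma=d\zeta$), any choice of initial condition in $G_+$ integrates to a frame field $\mathrm{F}$, and $\gamma=\mathcal{P}\circ[\mathrm{F}_4]$ is then a generic curve parametrized by conformal arclength with conformal curvatures exactly $(k_1,k_2)$; since $k_1$ is non-constant periodic, $\gamma$ is quasi-periodic, and by construction its associated point in $S$ is $(a,b)$.

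Finally I would prove injectivity: if two quasi-periodic critical curves $\gamma,\widetilde\gamma$ yield the same $(a,b)$, then by~\eqref{2.1.4} their conformal curvatures agree after applying at most an orientation-reversing conformal transformation (to fix $\epsilon_1$), a reversal of orientation along the curve (to fix $\epsilon_2$), and a shift of the conformal arclength parameter (to fix $h$); once $(k_1,k_2)=(\widetilde k_1,\widetilde k_2)$ as functions, the remark following Proposition~\ref{P1.2.1} --- that two solutions of~\eqref{1.2.3} differ by a unique left translation in $G_+$ --- shows the two curves are M\"obius equivalent. Hence the map on M\"obius classes is a bijection onto $S$.

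I do not expect a serious obstacle here; the content is essentially bookkeeping of the normalizations $\epsilon_1=\epsilon_2=1$, $h=0$ already carried out in the text. The one point that needs a little care is verifying that the constraints defining $S$ are exactly the ones forced by ``non-constant periodic'': one must rule out $b=0$ (which gives the non-periodic separatrix solution $k_1\propto\sech$), the case $a\le 0$ and the case of a repeated root (no real non-constant solutions), and also confirm that no further identifications among points of $S$ arise --- i.e., that the ordering $a>b$ together with $a>0$, $b\neq 0$ already selects a \emph{unique} representative of each pair of roots, so that distinct $(a,b)\in S$ really do give M\"obius-inequivalent curves. This last point is where I would be most careful to make sure the correspondence is genuinely $1$-$1$ and not merely onto.
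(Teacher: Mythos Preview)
Your proposal is correct and follows essentially the same approach as the paper: the paper's ``proof'' is exactly the discussion preceding the proposition (the analysis of when \eqref{2.1.2} has non-constant periodic solutions, the explicit general solution \eqref{2.1.4}, and the normalizations $\epsilon_1=\epsilon_2=1$, $h=0$), capped by ``We have proved the following.'' You have simply organized that discussion into the well-defined/surjective/injective structure and flagged the boundary cases ($b=0$, repeated root, $a\le 0$) a bit more explicitly, which is a faithful expansion of the paper's argument rather than a different route.
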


\begin{remark}
The M\"obius class of a critical curve with parameters $a$ and $b$ is represented by any conformal parametrization $\gamma :\R\to \R^3$ with curvatures
 \begin{equation}\label{2.1.7}
k_1=k_{(a,b)},\quad k_2=-\frac{3}{2}k_{(a,b)}^2+\frac{a+b}{2}.
\end{equation}
Note that the minimal period of $k_{(a,b)}$ is given by
\begin{equation}\label{2.1.8}
\omega_{(a,b)}=
 \begin{cases}
  {4K \left(a (a-b)^{-1}\right)}/{\sqrt{a-b}}
   \quad & b<0,\\
   {2K\left(b^{-1}(a-b)\right)}/{\sqrt{a}}
    \quad & b>0,
    \end{cases}
      \end{equation}
where
\begin{equation}\label{2.1.9}
K(m)=\int_0^{\pi/2}\frac{1}{\sqrt{1-m\sin^2t}}dt
  \end{equation}
is the complete elliptic integral of the first kind.
\end{remark}

\subsection{Closure conditions}\label{2.2}

Let $\Sigma$ denote the open subset of $S\subset \mathbb R^2$ defined by
\begin{equation}\label{Sigma}
  \Sigma=\left\{(a,b)\in S \,:\, ab>1 \right\}.
  \end{equation}
For every $(a,b)\in \Sigma$, let
\begin{equation}\label{2.2.1}
\begin{aligned}
 \mu(a,b)&=\frac{1}{\sqrt{2}}\sqrt{a+b+\sqrt{4+(a-b)^2}},\\
  \upsilon(a,b)& =\frac{1}{\sqrt{2}}\sqrt{a+b-\sqrt{4+(a-b)^2}},
  \end{aligned}
   \end{equation}
and define
\begin{equation}\label{2.2.2}
 \begin{aligned}
  \Phi_1(a,b)&:=\frac{1}{2\pi}\int_0^{\omega(a,b)}\frac{\mu(a,b)}{k_{(a,b)}(t)^2-\mu(a,b)^2}dt,\\
   \Phi_2(a,b)&:=\frac{1}{2\pi}\int_0^{\omega(a,b)}\frac{\upsilon(a,b)}{k_{(a,b)}(t)^2-\upsilon(a,b)^2}dt.
    \end{aligned}
\end{equation}
These integrals can be evaluated in term of the complete integral of the third kind (cf., for example, \cite{W})
\begin{equation}\label{thirdkind}
  \Pi(n,m)=\int_0^{\pi/2}\frac{dt}{(1-n\sin^2 t)\sqrt{1-m\sin^2 t}},\quad -1<n,m<1.
    \end{equation}
As a result, we have
\begin{equation}
\begin{aligned}\label{2.2.3}
 \Phi_1(a,b)&=\frac{\mu(a,b)}{\pi \sqrt{a}(a-\mu(a,b)^2)}\Pi\left(\frac{a-b}{a-\mu(a,b)^2},\frac{a-b}{a}\right),\\
 \Phi_2(a,b)&=\frac{\upsilon(a,b)}{\pi \sqrt{a}(a-\upsilon(a,b)^2)}\Pi\left(\frac{a-b}{a-\upsilon(a,b)^2},\frac{a-b}{a}\right).
  \end{aligned}
\end{equation}

\begin{defn}\label{def:period-map}
We call $\Phi = (\Phi_1,\Phi_2) : \Sigma \to \R^2$ the {\it period map}
of the conformal arclength functional.
By construction, $\Phi$ is non constant and real analytic.
\end{defn}

We are now in a position to state the following.

\begin{thm}\label{thm1}
 A quasi-periodic critical curve with parameters $a$ and $b$ is a conformal string if and only if
 $(a,b)\in \Sigma$ and $\Phi_1(a,b)$, $\Phi_2(a,b)\in \mathbb{Q}$.
  \end{thm}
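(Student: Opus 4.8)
The plan is to realize a quasi-periodic critical curve explicitly via its Vessiot frame and then read off the closure condition from the asymptotics of the frame. Fix $(a,b)\in S$ and let $\gamma$ be the conformal parametrization with curvatures $k_1=k_{(a,b)}$, $k_2=-\tfrac32 k_{(a,b)}^2+\tfrac{a+b}{2}$ as in \eqref{2.1.7}. The coefficient matrix $\mathrm{F}^{-1}d\mathrm{F}$ in \eqref{1.2.3} depends only on $k_1,k_2$ and is $\omega$-periodic, where $\omega=\omega_{(a,b)}$ is given by \eqref{2.1.8}; hence the monodromy $M=\mathrm{F}(\omega)\mathrm{F}(0)^{-1}$ is well defined, and $\gamma$ is closed if and only if $M$ has finite order in $G_+$. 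First I would show that $(a,b)\in\Sigma$, i.e. $ab>1$, is exactly the condition under which $M$ is conjugate into the compact torus $K$ of \eqref{1.1.5}: the point is that $ab>1$ forces $2k_1^2k_2 < -1$ to fail in the non-constant regime in a way that makes the relevant "energy'' of the associated linear flow elliptic rather than hyperbolic or parabolic. Concretely, one diagonalizes the averaged/monodromy behaviour using the quantities $\mu(a,b),\upsilon(a,b)$ from \eqref{2.2.1}; note $\mu^2\upsilon^2 = \tfrac14\big((a+b)^2-(4+(a-b)^2)\big) = ab-1$, so $\mu,\upsilon$ are real and nonzero precisely when $ab>1$, and they are the two rotation speeds of the two $SO(2)$ factors. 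When $ab\le 1$ the monodromy is non-elliptic and $M$ has infinite order, so no closed curve arises; this handles the ``only if'' for membership in $\Sigma$.

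Assuming $(a,b)\in\Sigma$, the next step is to conjugate $\mathrm F$ so that its monodromy lies in $K$ and to compute the two rotation angles of $M$ explicitly. I would pass to a moving frame adapted to the two invariant $2$-planes of the flow; after this gauge change the linear system \eqref{1.2.3} decouples (up to the periodic part) into two oscillators whose accumulated phases over one wavelength $\omega$ are precisely
\[
 \phi_1 = \int_0^{\omega}\frac{\mu}{\mu^2-k_{(a,b)}(t)^2}\,dt = 2\pi\,\Phi_1(a,b),\qquad
 \phi_2 = \int_0^{\omega}\frac{\upsilon}{\upsilon^2-k_{(a,b)}(t)^2}\,dt = 2\pi\,\Phi_2(a,b),
\]
so that $M$ is conjugate to $R(\phi_1,\phi_2)=R(2\pi\Phi_1,2\pi\Phi_2)$. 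This is the computational heart of the argument: one must verify that the two decoupled phases are exactly the integrals defining $\Phi_1,\Phi_2$ in \eqref{2.2.2}, which amounts to a quadrature using the first-order ODE \eqref{2.1.3} satisfied by $k_1$ together with the product identity $\mu^2\upsilon^2=ab-1$; the reduction to the complete integral of the third kind \eqref{thirdkind} and formula \eqref{2.2.3} then follows from standard substitutions in $\cn$/$\dn$ (cf. \cite{By,La}) and can be cited rather than carried out in full.

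With $M$ conjugate to $R(2\pi\Phi_1,2\pi\Phi_2)$, the conclusion is immediate: $M$ has finite order in $K\cong SO(2)\times SO(2)$ if and only if both $\Phi_1(a,b)$ and $\Phi_2(a,b)$ are rational. By the final paragraph of Section~\ref{s:1}, for a real-analytic curve with positive chirality $\gamma$ is closed exactly when $M$ has finite order; since our $\gamma$ is real-analytic (its curvatures are built from elliptic functions) and $k_1=k_{(a,b)}$ — which I may take with positive chirality after acting by an orientation-reversing transformation, exactly as in the normalization preceding Proposition~\ref{prop1} — this gives: $\gamma$ is a conformal string $\iff$ $(a,b)\in\Sigma$ and $\Phi_1(a,b),\Phi_2(a,b)\in\mathbb Q$. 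The main obstacle I anticipate is the decoupling/diagonalization step: producing the explicit constant-free gauge transformation that conjugates the Vessiot monodromy into $K$ and shows the two phases are precisely the periods $2\pi\Phi_1,2\pi\Phi_2$, rather than some other pair of linear combinations of them. Everything else — the trichotomy elliptic/parabolic/hyperbolic governed by $ab\gtrless 1$, and the elementary fact that a rotation of $SO(2)$ has finite order iff its angle is a rational multiple of $2\pi$ — is routine.
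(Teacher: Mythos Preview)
Your plan is correct and is essentially the paper's approach: the decoupling you flag as the main obstacle is supplied by a constant ``momentum'' matrix $\mathfrak{X}\in\mathfrak{g}$ (Lemma~\ref{l:momentum}) with eigenvalues $0,\pm i\mu,\pm i\upsilon$, whose diagonalization reduces the null-lift ODE to scalar equations with accumulated phases $\pm 2\pi\Phi_1,\pm 2\pi\Phi_2$ over one wavelength, and the trichotomy $ab\lessgtr 1$ is exactly your observation $\mu^2\upsilon^2=ab-1$ governing whether these eigenvalues are purely imaginary. One cosmetic slip: with the sign convention of \eqref{2.2.2} your integrals $\phi_j$ equal $-2\pi\Phi_j$, not $+2\pi\Phi_j$, though this is immaterial for the finite-order criterion.
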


For every $(a,b)\in S$, let $\gamma:\R\to \R^3$ be the conformal parametrization of a quasi-periodic
critical curve with parameters $(a,b)\in S$, such that $\mathrm{F}(0)=\mathrm{Id}$, where
$\mathrm{F}$ is the Vessiot frame along $\gamma$. The claim is that $\gamma$ is periodic if
and only if $(a,b)\in \Sigma$ and
$\Phi_j(a,b)\in \mathbb{Q}$, $j=1,2$. For brevity, let $k$ denote the
first conformal curvature $k_1$, let $\mu$ and $\upsilon$ denote the constants $\mu(a,b)$,
$\upsilon(a,b)$, respectively, and let $\omega$ denote the minimal period $\omega(a,b)$.

\begin{lemma}[cf. \cite{M1}]\label{l:momentum}
The canonical null lift $\Gamma :\R\to \mathcal{L}_+$ of $\gamma$ satisfies the linear system
\begin{align}\label{2.2.4}
  X'&=\left(\frac{k'}{k}\mathrm{Id}-\frac{k^2-a-b}{k'^2+1}\mathfrak{X}
   +\frac{k'(k^2-a-b)}{k(k'^2+1)}\mathfrak{X}^2 \right. \\
\notag     &\qquad \left.+\frac{1}{k'^2+1}\mathfrak{X}^3-\frac{k'}{k(k'^2+1)}\mathfrak{X}^4\right) X,
     \end{align}
with the initial condition $\Gamma(0)=e_4$, where
\[ \mathfrak{X}
=\left(
\begin{smallmatrix}
  0 & 0 & 1 & 0 & 0 \\
   1 & 0 & 0 & \sqrt{a} & 0 \\
    -b/2 & 0 & 0 & 0 & 1 \\
    0 & -\sqrt{a} & 0 & 0 & 0 \\
    0 & 1 & -b/2 & 0 & 0\\
    \end{smallmatrix}
    \right)\in \mathfrak{g}.
   \]
\end{lemma}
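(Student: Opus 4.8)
We first dispose of the initial datum: since $\mathrm{F}(0)=\mathrm{Id}$ one has $\Gamma(0)=\mathrm{F}_4(0)=e_4$. For the differential equation the plan is to turn the conservation of the contact momentum along the characteristic curve (recalled in the Introduction) into a Lax-type identity for the Vessiot frame. Along a critical curve with parameters $(a,b)$ we have, from \eqref{2.1.3}, $(k')^2=-(k^2-a)(k^2-b)$, and differentiating, $k''=(a+b)k-2k^3$, so the jet of $k=k_1$ is rigidly constrained; as in \cite{M1}, this produces a constant $\mathfrak{C}\in\mathfrak{g}$ — the value of the momentum map on the characteristic — such that the $\mathfrak{g}$-valued function $\Xi(t):=\mathrm{F}(t)^{-1}\,\mathfrak{C}\,\mathrm{F}(t)$ coincides with an explicit matrix $\Lambda=\Lambda(k,k')$ with polynomial entries in $k$ and $k'$ (the curvature $k_2$ eliminated by $k_2=\tfrac{a+b}{2}-\tfrac32 k^2$). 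Indeed, differentiating $\Xi$ and using \eqref{1.2.3} gives $\Xi'=[\Xi,\varpi]$ with $\varpi$ the connection matrix of \eqref{1.2.3}; so it suffices to exhibit $\Lambda(k,k')$, verify that it solves the same linear ODE $\Lambda'=[\Lambda,\varpi]$ — a routine check using only the two first integrals above — and that $\Lambda(0)=\mathfrak{C}$. Evaluating $\Lambda$ at $t=0$, where $k(0)=\sqrt a$ and $k'(0)=0$ ($k_{(a,b)}$ being even), one recognizes exactly the matrix $\mathfrak{X}$ of the statement, so $\mathfrak{C}=\mathfrak{X}$ and $\mathrm{F}(t)\,\Lambda(t)\,\mathrm{F}(t)^{-1}\equiv\mathfrak{X}$.

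Granting this Lax identity, the remainder is bookkeeping. From $\mathfrak{X}=\mathrm{F}\Lambda\mathrm{F}^{-1}$ one gets $\mathfrak{X}^j\Gamma=\mathfrak{X}^j\mathrm{F}e_4=\mathrm{F}\,\Lambda^j e_4$ for all $j\ge0$, so the right-hand side of \eqref{2.2.4} evaluated at $X=\Gamma$ is $\mathrm{F}\big(\sum_{j=0}^{4}f_j(t)\,\Lambda(t)^j\big)e_4$, with $f_0=k'/k$, $f_1=-(k^2-a-b)/(k'^2+1)$, $f_2=k'(k^2-a-b)/\!\bigl(k(k'^2+1)\bigr)$, $f_3=1/(k'^2+1)$, $f_4=-k'/\!\bigl(k(k'^2+1)\bigr)$. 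Since $\det(\lambda\,\mathrm{Id}-\mathfrak{X})=\lambda^5+(a+b)\lambda^3+(ab-1)\lambda$, the Cayley--Hamilton relation $\Lambda^5=-(a+b)\Lambda^3-(ab-1)\Lambda$ keeps every power at degree $\le4$; a direct computation of $\sum_{j}f_j(t)\,\Lambda(t)^j$, using $(k')^2=-(k^2-a)(k^2-b)$ repeatedly, then shows that its $e_4$-column equals $e_1$ (the apparent poles at the zeros of $k$ cancelling identically). Therefore the right-hand side of \eqref{2.2.4} at $\Gamma$ is $\mathrm{F}e_1=\mathrm{F}_1=\mathrm{F}_4'=\Gamma'$, the last equality being a structure equation in \eqref{1.2.3}, and $\Gamma$ solves \eqref{2.2.4}.

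The step I expect to be the real obstacle is writing down $\Lambda=\Lambda(k,k')$ and checking that the $e_4$-column of $\sum_j f_j\Lambda^j$ collapses to $e_1$: each manipulation is mechanical but it leans persistently on \eqref{2.1.3} and on the precise form of $\mathfrak{X}$. A cleaner way to organize this is to diagonalize $\mathfrak{X}$ over $\C$: by the factorization $\det(\lambda\,\mathrm{Id}-\mathfrak{X})=\lambda(\lambda^2+\mu^2)(\lambda^2+\upsilon^2)$, with $\mu,\upsilon$ as in \eqref{2.2.1}, the eigenvalues are $0,\pm i\mu,\pm i\upsilon$, and on the $i\mu$-eigenline \eqref{2.2.4} reduces to the scalar equation $v'=-\tfrac{k^2-\upsilon^2}{k'^2+1}\,(kk'+i\mu)\,v$, whose solution involves precisely $\sqrt{\mu^2-k^2}$ and the phase $\Theta_1$ of Theorem \ref{thm:B} (and likewise $\sqrt{k^2-\upsilon^2}$, $\Theta_2$ on the $\pm i\upsilon$-eigenlines and a factor proportional to $k$ on the kernel); this makes the column identity transparent and is exactly what yields the explicit parametrization of Theorem \ref{thm:B}. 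Alternatively, and self-containedly, one may let $\widehat\Gamma$ solve \eqref{2.2.4} with $\widehat\Gamma(0)=e_4$ and check directly — again via \eqref{2.1.3} — that $\widehat\Gamma$ is $\mathcal{L}_+$-valued and that the frame built from $\widehat\Gamma,\widehat\Gamma',\dots,\widehat\Gamma^{(4)}$ satisfies \eqref{1.2.3} with the curvatures \eqref{2.1.7}, whence $\widehat\Gamma=\Gamma$ by uniqueness in Proposition \ref{P1.2.1}.
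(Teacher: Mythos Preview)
The paper does not actually prove this lemma: it is stated with the attribution ``cf.\ \cite{M1}'' and used as input for the explicit integration carried out in \cite{M1,MMR}. Your proposal, by contrast, sketches a genuine argument, and it is the right one: the constancy of $\mathrm{F}\,\Lambda(k,k')\,\mathrm{F}^{-1}$ along a critical curve is exactly the momentum conservation coming from Griffiths' formalism (as signalled in the Introduction), and identifying that constant with $\mathfrak{X}$ via the initial data $k(0)=\sqrt{a}$, $k'(0)=0$ is correct. Your reduction of the right-hand side of \eqref{2.2.4} to $\mathrm{F}e_1=\Gamma'$ via $\mathfrak{X}^j\Gamma=\mathrm{F}\Lambda^j e_4$ is clean, and the scalar equation you write on the $i\mu$-eigenline agrees with what the paper derives immediately afterwards in the proof of Lemma~\ref{RL} (the identity $(k')^2+1=(\mu^2-k^2)(k^2-\upsilon^2)$ makes your coefficient $-\tfrac{k^2-\upsilon^2}{k'^2+1}(kk'+i\mu)$ equal to the paper's $-\tfrac{kk'+i\mu}{\mu^2-k^2}$). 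The only thing left implicit is the explicit form of $\Lambda(k,k')$ and the five-line check that $\sum_j f_j\Lambda^j e_4=e_1$; you flag this honestly, and the diagonalization you outline is precisely the device the paper uses to make that check transparent.
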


Using Lemma \ref{l:momentum}, an explicit integration of the critical curves was carried
out in \cite{M1} (see also \cite{MMR}).
The proof of Theorem \ref{thm1} is a direct consequence of this integration.
According to the analysis carried out in \cite{M1}, we are led to consider three cases,
depending on whether $ab<1$, $ab=1$, or $ab>1$.
It follows from \cite{M1} that the conformal parametrizations of a quasi-periodic critical
curve with parameters $a$, $b$ can only be periodic if $ab>1$, in which case the following holds.

\begin{lemma}\label{RL}
 A quasi-periodic critical curve with $ab>1$ is closed if and only if
  $\Phi_1(a,b)$ and $\Phi_2(a,b)$ are rational numbers.
   \end{lemma}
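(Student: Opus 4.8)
The plan is to extract the monodromy $M$ of $\gamma$ from the explicit integration of the linear system in Lemma \ref{l:momentum}, to identify $M$ with a toroidal rotation by the angles $2\pi\Phi_{1}(a,b)$ and $2\pi\Phi_{2}(a,b)$, and then to invoke the fact (recorded in the Remark of Section \ref{ss:1.2}) that a real-analytic curve of positive chirality is closed exactly when its monodromy has finite order.

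The first point to record is the spectral meaning of the hypothesis $ab>1$: the constants $\mu=\mu(a,b)$ and $\upsilon=\upsilon(a,b)$ of \eqref{2.2.1} are then real and positive with $\mu\neq\upsilon$, and $\mu^{2}+\upsilon^{2}=a+b$, $\mu^{2}\upsilon^{2}=ab-1$. Hence the fixed matrix $\mathfrak X\in\mathfrak g$ of Lemma \ref{l:momentum} has the five distinct eigenvalues $0,\pm i\mu,\pm i\upsilon$ and is diagonalizable over $\mathbb C$; this is precisely the regime in which $\mathfrak X$ is elliptic and regular. Because the coefficient matrix $\mathcal A(t)$ of \eqref{2.2.4} is a polynomial in the single matrix $\mathfrak X$, the matrices $\mathcal A(t)$ commute for all values of $t$, so $\Gamma(t)=\exp\!\bigl(\int_{0}^{t}\mathcal A(s)\,ds\bigr)\,e_{4}$. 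Decomposing $e_{4}$ in an eigenbasis $\{v_{0},v_{\pm i\mu},v_{\pm i\upsilon}\}$ of $\mathfrak X$ and integrating eigenvalue by eigenvalue, one finds — using $(k')^{2}+1=(\mu^{2}-k^{2})(k^{2}-\upsilon^{2})$, a consequence of \eqref{2.1.3}, together with the two identities above for the elementary simplifications — that the $v_{0}$-component of $\Gamma(t)$ is rescaled by $k(t)/k(0)$, while the $v_{\pm i\mu}$- and $v_{\pm i\upsilon}$-components acquire the factors
\[
\sqrt{\tfrac{\mu^{2}-k(t)^{2}}{\mu^{2}-k(0)^{2}}}\;e^{\mp i\Theta_{1}(t)}
\qquad\text{and}\qquad
\sqrt{\tfrac{k(t)^{2}-\upsilon^{2}}{k(0)^{2}-\upsilon^{2}}}\;e^{\mp i\Theta_{2}(t)},
\]
with $\Theta_{1},\Theta_{2}$ the angular functions of Theorem \ref{thm:B}. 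This is the integration carried out in \cite{M1}, which in fact produces the full Vessiot frame $\mathrm F$ in the same closed form.

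Now, since $k$ has minimal period $\omega$ and the integrands defining $\Theta_{1},\Theta_{2}$ are $\omega$-periodic, the four amplitude factors return to $1$ after one period, while $\Theta_{j}(t+\omega)=\Theta_{j}(t)+\Theta_{j}(\omega)$ with $\Theta_{j}(\omega)=-2\pi\Phi_{j}(a,b)$ upon comparing \eqref{2.2.2} with the formulas in Theorem \ref{thm:B}. Therefore $\Gamma(t+\omega)=\mathcal R\cdot\Gamma(t)$, where $\mathcal R\in G_{+}$ fixes $v_{0}$ and multiplies $v_{\pm i\mu}$, $v_{\pm i\upsilon}$ by $e^{\pm 2\pi i\Phi_{1}}$, $e^{\pm 2\pi i\Phi_{2}}$, and the same relation holds for $\mathrm F$, so $M=\mathcal R$. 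Being an element of the maximal torus determined by the eigenspaces of $\mathfrak X$ and having eigenvalues $1,e^{\pm 2\pi i\Phi_{1}},e^{\pm 2\pi i\Phi_{2}}$, the monodromy $M$ is conjugate in $G_{+}$ to $R(2\pi\Phi_{1},2\pi\Phi_{2})$.

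Finally, $ab>1$ together with $a>0$ forces $b>0$, so $k_{1}=k_{(a,b)}=\sqrt a\,\dn\bigl(\sqrt a\,t,\tfrac{a-b}{a}\bigr)$ is a positive, real-analytic function; thus $\gamma$ is a real-analytic curve of positive chirality, and by the Remark in Section \ref{ss:1.2} it is closed if and only if $M$ has finite order. Since $R(2\pi\Phi_{1},2\pi\Phi_{2})^{n}=R(2\pi n\Phi_{1},2\pi n\Phi_{2})$, the element $M$ has finite order precisely when $e^{2\pi i\Phi_{1}}$ and $e^{2\pi i\Phi_{2}}$ are roots of unity, that is, precisely when $\Phi_{1}(a,b)$ and $\Phi_{2}(a,b)$ lie in $\mathbb Q$ — which is the assertion. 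The delicate step, and the one where I would lean on the explicit integration in \cite{M1}, is obtaining the closed form of $\Gamma$ (and of $\mathrm F$) together with the precise normalization that pins the accumulated phases over one period to $-2\pi\Phi_{1}$ and $-2\pi\Phi_{2}$; once that normalization is in hand, the equivalence with rationality is immediate.
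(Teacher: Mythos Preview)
Your proof is correct and follows essentially the same route as the paper: diagonalize $\mathfrak{X}$ (the hypothesis $ab>1$ giving the purely imaginary spectrum $0,\pm i\mu,\pm i\upsilon$), integrate the decoupled scalar equations for the components of $\Gamma$ in the eigenbasis to obtain the explicit amplitude--phase form, and conclude that closedness is equivalent to $\Phi_1,\Phi_2\in\mathbb{Q}$. The only cosmetic difference is that you package the conclusion via the monodromy $M$ (conjugate to $R(2\pi\Phi_1,2\pi\Phi_2)$) together with the positive-chirality criterion from Section~\ref{ss:1.2}, whereas the paper argues directly with the periodicity of the scalar functions $w_0,\dots,w_4$.
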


\begin{proof}[Proof of Lemma \ref{RL}]
Assume that $a>0$, $a>b$, and $ab>1$. Then, the eigenvalues of $\mathfrak{X}$ are
$\lambda_0=0$, $\lambda_{1,\pm}=\pm  i\mu$, and $\lambda_{2,\pm}= \pm  i \upsilon$.
Choose $Y\in GL(5,\C)$, such that
\[
\hat{\mathfrak{X}}=Y^{-1} \mathfrak{X} Y=\left(
      \begin{smallmatrix}
         0 & 0 & 0 & 0 & 0 \\
              0 & i\mu & 0 & 0 & 0 \\
               0 & 0 & -i\mu & 0 & 0 \\
             0 & 0 & 0 & i\upsilon & 0  \\
           0 & 0 & 0 & 0 & -i\upsilon \\
         \end{smallmatrix}
   \right)
     \]
and consider the map $V=Y^{-1} \Gamma:\R\to \C^5$.
From \eqref{2.2.4}, it follows that the components $v_0,\dots,v_4$ of $V$ satisfy
\[
\begin{cases}
v_0'=\frac{k'}{k}v_0,\\
v'_1=\left( \frac{k'}{k}-i\frac{k^2-a-b}{k'^2+1}\mu - \frac{k'(k^2-a-b)}{k(k'^2+1)}\mu^2
-i \frac{1}{k'^2+1}\mu^3+\frac{k'}{k(k'^2+1)}\mu^4\right)v_1,\\
v'_2=\left( \frac{k'}{k}+i\frac{k^2-a-b}{k'^2+1}\mu - \frac{k'(k^2-a-b)}{k(k'^2+1)}\mu^2
+i \frac{1}{k'^2+1}\mu^3+\frac{k'}{k(k'^2+1)}\mu^4\right)v_2,\\
v'_3=\left( \frac{k'}{k}-i\frac{k^2-a-b}{k'^2+1}\upsilon - \frac{k'(k^2-a-b)}{k(k'^2+1)}\upsilon^2
-i \frac{1}{k'^2+1}\upsilon^3+\frac{k'}{k(k'^2+1)}\upsilon^4\right)v_3,\\
v'_4=\left( \frac{k'}{k}+i\frac{k^2-a-b}{k'^2+1}\upsilon - \frac{k'(k^2-a-b)}{k(k'^2+1)}\upsilon^2
+i \frac{1}{k'^2+1}\upsilon^3+\frac{k'}{k(k'^2+1)}\upsilon^4\right)v_4.
\end{cases}
\]
Using $k'^2+(k^2-a)(k^2-b)=0$, the above equations take the form
\begin{align*}
 v'_1&=-\frac{kk'+i\mu}{\mu^2-k^2}v_1,\quad
  v_2'=\frac{-kk'+i\mu}{\mu^2-k^2}v_2,\\
    v_3'&=-\frac{kk'+i\upsilon}{\upsilon^2-k^2}v_3,\quad
     v_4'=\frac{-kk'+i\upsilon}{\upsilon^2-k^2}v_4.
\end{align*}
Thus $v_a = p_a w_a$, where $p_0,\dots,p_4\in \C$ and $w_0,\dots,w_4$ are the complex-valued functions
given by
\[
\begin{aligned}
w_0&=k,
\quad w_1=\sqrt{\mu^2-k^2}e^{i\int \frac{\mu}{\mu^2-k^2}dt},
\quad w_2=\sqrt{\mu^2-k^2}e^{-i\int \frac{\mu}{\mu^2-k^2}dt},\\
w_3&=\sqrt{\upsilon^2-k^2}e^{i\int \frac{\upsilon}{\upsilon^2-k^2}dt},\quad
w_4 =\sqrt{\upsilon^2-k^2}e^{-i\int \frac{\upsilon}{\upsilon^2-k^2}dt}.
\end{aligned}
\]
Since $k$ is a non-constant periodic function with minimal period $\omega$,
the functions $w_0,\dots,w_4$ are periodic if and only if
\[
  \Phi_1=\frac{1}{2\pi}\int_0^{\omega}\frac{\mu}{k(t)^2-\mu^2}dt\in \mathbb{Q},
   \quad  \Phi_2=\frac{1}{2\pi}\int_0^{\omega}\frac{\upsilon}{k(t)^2-\upsilon^2}dt\in \mathbb{Q},
   \]
which proves the lemma.
\end{proof}


This proves Theorem \ref{thm1}. As a corollary, we have the following.

\begin{prop}\label{prop2}
The M\"obius classes of conformal strings are in 1-1 correspondence
with the elements of the countable set
\[
 \Sigma_*=\left\{(a,b)\in \R^2 \,:\, a>0,\, a>b,\, ab>1,\, \Phi_1(a,b),\, \Phi_2(a,b)\in \mathbb{Q}\right\}.
  \]
   \end{prop}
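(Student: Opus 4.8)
The plan is to read off Proposition~\ref{prop2} from Proposition~\ref{prop1} and Theorem~\ref{thm1}, once one checks the essentially tautological point that a conformal string is a quasi-periodic critical curve. Recall that a conformal string is, by definition, a closed critical curve of $\mathcal L$ with non-constant conformal curvatures; being critical it is generic, hence it carries the conformal arclength parameter $\zeta$ and the Vessiot frame, and being closed the conformal parametrization is complete, with $\gamma(\zeta+L)=\gamma(\zeta)$ for some $L>0$.

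First I would verify that such a $\gamma$ is quasi-periodic. The conformal curvatures $k_1,k_2$ are local conformal invariants of $\gamma$, so the $L$-periodicity of $\gamma$ in the conformal arclength forces $k_1$ and $k_2$ to be $L$-periodic; by hypothesis they are non-constant. The Euler--Lagrange equation \eqref{2.1.1} gives $k_2=-\tfrac{3}{2}k_1^2+C_1$, and \eqref{2.1.2} shows that the non-constant periodic function $k_1$ is, up to a sign and a translation of the variable, one of the Jacobi elliptic functions $k_{(a,b)}$ of \eqref{2.1.5}; in particular the pair $(k_1,k_2)$ has a common minimal period, namely $\omega_{(a,b)}>0$ as in \eqref{2.1.8}. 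Hence $\gamma$ is a quasi-periodic critical curve with parameters $(a,b)\in S$. Conversely, a quasi-periodic critical curve has non-constant conformal curvatures by definition, so one that closes up is a conformal string.

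Next I would assemble the correspondence. By Proposition~\ref{prop1}, sending a quasi-periodic critical curve to its parameters defines a bijection from the set of M\"obius classes of quasi-periodic critical curves onto $S$. Restricting it to conformal strings and using Theorem~\ref{thm1} (the closure criterion, proved through Lemmas~\ref{l:momentum} and~\ref{RL}): the parameters $(a,b)$ of a conformal string lie in $\Sigma_*$, i.e.\ satisfy $(a,b)\in\Sigma$ and $\Phi_1(a,b),\Phi_2(a,b)\in\mathbb Q$; and, conversely, by Theorem~\ref{thm1} every $(a,b)\in\Sigma_*$ is the parameter pair of a conformal string. So the parameter map restricts to a bijection from the M\"obius classes of conformal strings onto $\Sigma_*$.

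Finally, one must see that $\Sigma_*$ is countable. Here $\Sigma_*=\Phi^{-1}(\mathbb Q\times\mathbb Q)$ for the real-analytic period map $\Phi:\Sigma\to\R^2$ on the connected open set $\Sigma$. This makes $\Sigma_*$ at most countable as soon as $\Phi$ is non-degenerate, i.e.\ a local diffeomorphism off a proper real-analytic subset $Z\subset\Sigma$ along whose branches $\Phi$ is not locally constant: then every fibre $\Phi^{-1}(p,q)$ is discrete in the second-countable space $\Sigma$, hence countable, and there are only countably many rational pairs $(p,q)$. This non-degeneracy of $\Phi$ is exactly the technical statement established in Section~\ref{s:3} (which also yields Theorem~\ref{thm:A}), so in the order of the paper the present corollary really leans on that later analysis; alternatively one may observe that, by the correspondence in Theorem~\ref{thm:A}, $\Sigma_*$ injects into the rational points of $\Omega$ and is therefore countable. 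Consequently the only non-formal point of the proof is the countability of $\Sigma_*$, and I expect that to be the main obstacle: everything else is bookkeeping layered on Proposition~\ref{prop1} and Theorem~\ref{thm1}.
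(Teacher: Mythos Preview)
Your proposal is correct and matches the paper's approach: the paper presents Proposition~\ref{prop2} simply as ``a corollary'' of Theorem~\ref{thm1} (together with Proposition~\ref{prop1}), with no separate argument, so your unpacking of the bijection is exactly the intended reading. Your observation about countability is also well taken: the paper does not justify it at this point either, and the claim is only fully substantiated by the later analysis of the period map in Section~\ref{s:3} (specifically Proposition~\ref{prop4.II}, where the Jacobian of $\Phi$ is shown to be strictly positive, or Theorem~\ref{thm2} in full).
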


   This Proposition will be used in the proof of Theorem \ref{thm:A}.

\section{The proof of Theorem \ref{thm:A}}\label{s:3}

Theorem \ref{thm:A} asserts that the M\"obius classes of
conformal strings are in one-to-one correspondence with the rational points of the
complex domain
\[
\Omega =
  \left\{q\in \mathbb{C} \,:\, {1}/{2} < \Re q< {1}/{\sqrt{2}},\,\,
   \Im q>0,\,\, |q| < {1}/{\sqrt{2}}\right\}.
    \]

The proof of Theorem \ref{thm:A} will follow from Proposition \ref{prop2}
and the next result about the period map $\Phi$ (cf. Definition \ref{def:period-map}).

\begin{thm}\label{thm2}
The period map $\Phi=(\Phi_1,\Phi_2) : \Sigma \to \R^2$ is a real-analytic diffeomorphism onto the domain
\[
 \widetilde{\Omega}=\left\{(x,y)\in \R^2 \,:\, -1/\sqrt{2}<x<-1/2,\, x^2+y^2<1/2,\, y>0 \right\}.
  \]
\end{thm}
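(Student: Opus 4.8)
The plan is to analyze the explicit formulas \eqref{2.2.3} for $\Phi_1,\Phi_2$ in terms of the complete elliptic integral $\Pi$ of the third kind, and to show that $\Phi$ extends smoothly to a suitable closure of $\Sigma$, where its boundary behaviour can be read off. First I would introduce more convenient coordinates on $\Sigma$: since $\mu(a,b)^2$ and $\upsilon(a,b)^2$ are the two roots of $s^2-(a+b)s+(ab-1)=0$, the condition $ab>1$ together with $a>0,\,a>b$ translates into $\mu^2>\upsilon^2$ with a definite sign pattern, and one checks directly that $0<\upsilon^2<b<a<\mu^2$ when $b>0$ (and the analogous chain when $b<0$, where $\upsilon^2<0$). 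Thus $k_{(a,b)}(t)^2$ oscillates strictly between $\min(b,0)$ and $a$, so the denominators $k^2-\mu^2$ and $k^2-\upsilon^2$ in \eqref{2.2.2} never vanish; this confirms $\Phi$ is real-analytic on $\Sigma$ (already stated) and, more importantly, shows $\Phi_1<0$ while $\Phi_2$ has the sign of $\upsilon$, which one arranges to be positive. This already locates the image in the quadrant $\{x<0,\,y>0\}$.

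Next I would establish the two nonlinear inequalities cutting out $\widetilde\Omega$, namely $x^2+y^2<1/2$ and $x>-1/\sqrt2$ (equivalently $-1/\sqrt2<x<-1/2$). The natural idea is that each is saturated in a boundary limit of $\Sigma$. The curve $\{ab=1\}$ (with $a>b>0$) is one boundary component: there $\upsilon=0$, so $\Phi_2\to 0$, and $\mu^2=a+b=a+1/a$; in this limit the integral defining $\Phi_1$ can be evaluated in closed form (the elliptic modulus degenerates appropriately) and I expect it to approach $-1/2$ uniformly, giving the facet $\{x=-1/2,\,y=0\}$ minus endpoints. The other relevant boundary is $b\to 0^+$ (or $b\to 0^-$), i.e.\ approaching $\{b=0\}$, and the "corner at infinity" $a\to\infty$; along these I would compute the limiting values of $(\Phi_1,\Phi_2)$ and check that the locus $x^2+y^2=1/2$ and the line $x=-1/\sqrt2$ arise. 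Concretely, writing everything through $\Pi$ and using the standard reductions of $\Pi$ at parameter values $n=m$, $n=0$, $n=1$ (formulas from \cite{By}), these boundary integrals collapse to elementary functions, and the three defining relations of $\widetilde\Omega$ should each emerge as the image of one piece of $\partial\Sigma$.

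To upgrade "image contained in $\widetilde\Omega$" to "diffeomorphism onto $\widetilde\Omega$" I would argue in two steps. \emph{Injectivity:} the most robust route is to show the Jacobian $\det D\Phi$ is nowhere zero on $\Sigma$ — a direct but somewhat lengthy computation differentiating \eqref{2.2.3}, best organized by differentiating under the integral sign in \eqref{2.2.2} and exploiting the monotone dependence of $k_{(a,b)}$'s amplitude and period on $(a,b)$ — combined with a monotonicity/degree argument: $\Phi$ is proper as a map $\Sigma\to\widetilde\Omega$ (preimages of compacta stay in compacta, because approaching $\partial\Sigma$ forces $\Phi$ to $\partial\widetilde\Omega$ by the boundary analysis above), and a proper local diffeomorphism between connected manifolds, with $\widetilde\Omega$ simply connected, is a diffeomorphism; since $\Sigma$ and $\widetilde\Omega$ are both simply connected planar domains, the covering has one sheet. \emph{Surjectivity} then comes for free from properness. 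The main obstacle I anticipate is the Jacobian non-vanishing: the formulas for $\Phi_1,\Phi_2$ involve $\Pi$ with parameters that themselves depend on $(a,b)$ through $\mu,\upsilon$, so $\partial\Phi_i/\partial a$ and $\partial\Phi_i/\partial b$ mix derivatives of $\Pi$ in both arguments, and showing the resulting $2\times2$ determinant has a fixed sign may require either a clever change of variables (e.g.\ passing to $(\mu^2,\upsilon^2)$ or to the pair of periods) or a representation of $\det D\Phi$ as a manifestly signed integral. If the direct Jacobian computation proves unwieldy, the fallback is to prove injectivity by a monotonicity argument along the level curves $\{\Phi_1=\text{const}\}$, reducing to a one-variable problem where the sign of a single derivative of a $\Pi$-combination must be controlled.
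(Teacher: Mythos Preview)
Your overall architecture---nonvanishing Jacobian, properness via boundary behaviour, then the covering-map argument over the simply connected target $\widetilde\Omega$---is sound and in fact slightly cleaner than the paper's route, which proves the Jacobian is positive and establishes surjectivity by a boundary/open--closed argument, but then proves injectivity by a separate hands-on analysis of the level curves $\mathcal V_1(x)=\Phi_1^{-1}(x)$ and $\mathcal V_2(y)=\Phi_2^{-1}(y)$ (showing each is connected and that they can meet in at most one point). Your ``fallback'' monotonicity-along-level-curves idea is exactly what the paper actually does. So the strategic choice is fine; the covering argument would spare you that level-curve work.

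Where your plan goes wrong is the identification of the boundary pieces, and this would derail the properness step. In $\Sigma=\{a>b>0,\ ab>1\}$ the two boundary arcs are $\partial_-\Sigma=\{ab=1,\ a\ge 1\}$ and $\partial_+\Sigma=\{a=b\ge 1\}$, meeting at $(1,1)$; the limit $b\to 0$ is not accessible since $ab>1$ forces $b>1/a$. On $\partial_-\Sigma$ you are right that $\upsilon=0$ and $\Phi_2\to 0$, but $\Phi_1$ does \emph{not} tend to $-1/2$: it is a nonconstant function of $a$ sweeping out the whole interval $(-1/\sqrt2,\,-1/2)$, so $\partial_-\Sigma$ maps onto the segment $\{y=0,\ -1/\sqrt2<x<-1/2\}$. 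The circular arc $\{x^2+y^2=1/2\}$ is the image of the diagonal $\partial_+\Sigma$ (where $m=(a-b)/a\to 0$ and the elliptic integrals degenerate to elementary functions, giving $\Phi(a,a)=\tfrac12(-\sqrt{(a+1)/a},\,\sqrt{(a-1)/a})$). Finally the vertical segment $\{x=-1/2\}$ is \emph{not} the image of any finite boundary arc: it arises from $a\to\infty$ (the paper isolates this as a lemma, showing $\Phi_1(a_n,b_n)\to -1/2$ whenever $a_n\to\infty$). You also need an argument that $\Phi(\Sigma)\subset\widetilde\Omega$ in the interior, not merely that boundary goes to boundary; the paper obtains this from the positivity of all four partial derivatives $\partial_a\Phi_i,\partial_b\Phi_i$ on $\Sigma$, which gives monotonicity of the curves $b\mapsto\Phi(a,b)$ and traps them in $\widetilde\Omega$. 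Once you correct the boundary picture and add this containment step, your properness/covering argument goes through.
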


The proof of Theorem \ref{thm2}
is based on a detailed study of the analytic properties
of $\Phi$. This study is split into several technical results
which will take up the remaining part of the section.


\subsection{Preparatory material}

Note that $\Phi_1$, $\Phi_2 : \Sigma \to \mathbb R$, and hence the period map
$\Phi =(\Phi_1,\Phi_2)$, extend analytically to the domain
\[
 \widetilde{\Sigma}=\left\{(a,b) \,:\, a>0,\, ab \ge 1 \right\}.
  \]
Slightly abusing notation, we will retain the same letters for
the extensions of $\Phi_1$ and $\Phi_2$ to $\widetilde{\Sigma}$.
Let
\begin{equation}\label{4.1}
 E:m\in [0,1)\mapsto \int_0^{\pi/2}\sqrt{1-m\sin^2t} dt\in \R
  \end{equation}
be the elliptic integral of the second kind. Then, $E(m)<K(m)$
and the ratio $E/K$ is a strictly decreasing function from $[0,1)$
onto $(0,1]$. The power series of $K$ and $E$
(cf. \cite{La}, p. 73, and \cite{W}) are given by
\begin{equation}\label{4.2}
 \begin{cases}
   K(m)=\frac{\pi}{2}\left(1+\frac{1}{4}m+\left(\frac{1\cdot 3}{2\cdot 4}\right)^2m^2
       +\left(\frac{1\cdot 3\cdot 5}{2       \cdot 4\cdot 6}\right)^2 m^3+\cdots \right),\\
     E(m)=\frac{\pi}{2}\left(1-\frac{1}{4}m-\frac{1}{3}\left(\frac{1\cdot 3}{2\cdot 4}\right)^2m^2
       -\frac{1}{5}\left(\frac{1\cdot 3\cdot 5}{2\cdot 4\cdot 6}\right)^2 m^3 +\cdots\right).
        \end{cases}
\end{equation}
We also recall the expressions of the derivatives of the complete elliptic integral
 of the third kind $\Pi$ given in \eqref{thirdkind} (cf. \cite{La}, \S 3.7-3.9, and \cite{W}),
\begin{equation}\label{4.3}
\begin{cases}
 \partial_n\Pi|_{(n,m)} = \frac{nE(m)+(m-n)K(m)+(n^2-m)\Pi(n,m)}{2(m-n)(n-1)n},\\
   \partial_m\Pi|_{(n,m)} = \frac{E(m)}{2(m-1)(n-m)}+\frac{\Pi(n,m)}{2(n-m)}.
     \end{cases}
      \end{equation}

\subsection{The derivatives of $\Phi_1$ and $\Phi_2$}

Using \eqref{2.2.3} and \eqref{4.3}, we have
\begin{equation}\label{4.4}
\begin{cases}
(\mathrm i) & \partial_a\Phi_1|_{(a,b)}=\frac{X_{11}(a,b)E(\frac{a-b}{a})
    +Y_{11}(a,b)K(\frac{a-b}{b})}{Z_{11}(a,b)},\\
(\mathrm {ii})&\partial_b\Phi_1|_{(a,b)}=\frac{X_{21}(a,b)E(\frac{a-b}{a})
    +Y_{21}(a,b)K(\frac{a-b}{b})}{Z_{21}(a,b)},\\
(\mathrm {iii})&\partial_a\Phi_2|_{(a,b)}=\frac{X_{12}(a,b)E(\frac{a-b}{a})
       +Y_{12}(a,b)K(\frac{a-b}{b})}{Z_{12}(a,b)},\\
(\mathrm {iv})&\partial_b\Phi_2|_{(a,b)}=\frac{X_{22}(a,b)E(\frac{a-b}{a})
                     +Y_{22}(a,b)K(\frac{a-b}{b})}{Z_{22}(a,b)},\\
\end{cases}
\end{equation}
where the coefficients $X_{ij}(a,b)$, $Y_{ij}(a,b)$, and $Z_{ij}(a,b)$ are given by
\[
\begin{cases}
X_{11}(a,b)=\sqrt{2}\left(2 z(a,b)-a^2b+a\left(4+ z(a,b)b\right)+b\left(
4+ z(a,b)b\right)\right),\\
Y_{11}(a,b)=-2\sqrt{2}\left(a+ z(a,b)-ab^2 +b\left(3+b( z(a,b)+b) \right) \right),\\
Z_{11}(a,b)=\pi\sqrt{a} z(a,b)(a-b)(a-b- z(a,b))\left(a+b+ z(a,b) \right)^{3/2},\\
X_{21}(a,b)=a(2b+ z(a,b)),\\
Y_{21}(a,b)=-b(a+b+ z(a,b)),
\end{cases}
\]
and by
\[
\begin{cases}
Z_{21}(a,b)=\sqrt{2}\pi b(a-b) z(a,b)\sqrt{a(a+b+ z(a,b))},\\
X_{12}(a,b)=\sqrt{2}\left(2 z(a,b)+a^2b+a(-4+b z(a,b))-b(4-b z(a,b)+b^2)\right),\\
Y_{12}(a,b)=2\sqrt{2}\left( a- z(a,b)-ab^2+b(3-b z(a,b)+b^2)\right),\\
Z_{12}(a,b)=\pi\sqrt{a} z(a,b)(a-b)\left(a-b+ z(a,b)\right)\left( a+b- z(a,b)\right)^{3/2},\\
X_{22}(a,b)=a\left( z(a,b)-2b\right),\\
Y_{22}(a,b)=b\left(a+b- z(a,b)\right),\\
Z_{22}(a,b)=\sqrt{2}\pi(a-b)b z(a,b)\sqrt{a\left(a+b- z(a,b) \right)}.
\end{cases}
\]
In the formulae above, $z(a,b)$ stands for $\sqrt{4+(a-b)^2}$. These formulae have been
derived with the help of the software {\sl Mathematica}. We now prove the following.

\begin{prop}\label{Prop4.I}
The partial derivatives of $\Phi_1$ and $\Phi_2$ are strictly positive on
\begin{equation}\label{4.5}
 \Sigma'=\left\{(a,b)\,:\, a>1,\, ab>1,\, b\le a\right\}.
   \end{equation}
\end{prop}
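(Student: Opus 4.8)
The plan is to establish positivity of each of the four partial derivatives in \eqref{4.4} by exploiting their common structure: each is a quotient $\bigl(X_{ij}E+Y_{ij}K\bigr)/Z_{ij}$, where $E=E\bigl(\tfrac{a-b}{a}\bigr)$, $K=K\bigl(\tfrac{a-b}{a}\bigr)$ (note the argument should be $\tfrac{a-b}{a}$, which equals $m$ with $0\le m<1$ on $\Sigma'$), and the $X_{ij}$, $Y_{ij}$, $Z_{ij}$ are explicit polynomials in $a$, $b$, and $\zeta=\zeta(a,b)=\sqrt{4+(a-b)^2}$. First I would record the elementary facts that on $\Sigma'$ we have $a>1$, $0<b\le a$, $a-b\ge 0$, $\zeta\ge 2$, and $\zeta>a-b$ (since $\zeta^2-(a-b)^2=4>0$), and also $\zeta<a+b$ — the latter because $\zeta^2<(a+b)^2$ is equivalent to $4<4ab$, i.e. $ab>1$, which holds on $\Sigma'$. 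In particular $\mu(a,b)^2=\tfrac12(a+b+\zeta)>0$ and $\upsilon(a,b)^2=\tfrac12(a+b-\zeta)>0$, so the square roots appearing in $Z_{11}$, $Z_{12}$ are real, and one checks each $Z_{ij}>0$ directly from these sign facts (e.g. $Z_{11}$ carries the factor $a-b-\zeta<0$, but also $\zeta(a-b)>0$ and an odd... wait, here one must be careful: $Z_{11}=\pi\sqrt a\,\zeta(a-b)(a-b-\zeta)(a+b+\zeta)^{3/2}$ has the single negative factor $(a-b-\zeta)$, so $Z_{11}<0$; correspondingly I expect $X_{11}E+Y_{11}K<0$ on $\Sigma'$, so the quotient is positive — the signs have to be tracked case by case).

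The heart of the argument is therefore, for each $(i,j)$, to determine the sign of the numerator $N_{ij}:=X_{ij}(a,b)E+Y_{ij}(a,b)K$ on $\Sigma'$. My approach here is to use the two-sided bounds on $E/K$: since $E/K$ maps $[0,1)$ strictly decreasingly onto $(0,1]$, we have $0<E<K$, and more precisely $E\ge \tfrac{\pi}{2}\cdot\tfrac{E}{K}\cdot\tfrac{2K}{\pi}$ is not immediately useful, so instead I would write $N_{ij}=K\bigl(X_{ij}\tfrac{E}{K}+Y_{ij}\bigr)$ and, depending on the signs of $X_{ij}$ and $Y_{ij}$ on $\Sigma'$, bound $X_{ij}\tfrac{E}{K}+Y_{ij}$ using $0<\tfrac EK<1$. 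When $X_{ij}>0$ and $Y_{ij}>0$ this is immediate; when the signs are mixed one needs the sharper inequality coming from the power series \eqref{4.2}, namely $E(m)>\tfrac{\pi}{2}(1-\tfrac m2)$ and $E(m)<\tfrac\pi2(1-\tfrac m4)$, equivalently $K(m)>\tfrac\pi2$ always and $K(m)<\tfrac{\pi}{2}\cdot\tfrac{1}{1-m}$ or a Landen-type bound. The cleanest route is to use the classical inequalities $\tfrac{\pi}{2}\sqrt{1-m}\le E(m)$ together with $E(m)K(m)$ Legendre-type relations only if needed; in practice I expect that for each $(i,j)$ one can reduce ``$N_{ij}$ has constant sign'' to ``a certain explicit polynomial in $a,b$ (obtained after clearing $\zeta$ by rationalizing, using $\zeta^2=4+(a-b)^2$) is positive on $\Sigma'$,'' which is then a finite semialgebraic verification — doable by hand after the substitution $a=1+u$, $b=\tfrac{1+v}{a}$ or similar, or simply quotable as a Positivstellensatz-type check, though since the paper uses {\sl Mathematica} for the derivative formulae it is consistent to verify these polynomial positivities the same way.

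Concretely, the key steps in order: (1) establish the elementary inequalities on $\Sigma'$: $\zeta>2$, $a-b\ge0$, $\zeta>|a-b|$, $\zeta<a+b$, hence $0<\upsilon^2<\mu^2$ and $0<m:=\tfrac{a-b}{a}<1$; (2) determine the sign of each $Z_{ij}$ from these (recording that $Z_{11},Z_{12}$ involve $(a\pm b\mp\zeta)^{3/2}$, which are real and positive for $\mu^2$, real and positive for $\upsilon^2$); (3) for each $(i,j)$, write $N_{ij}=K\cdot(X_{ij}\tfrac EK+Y_{ij})$ and bound using $0<\tfrac EK<1$ plus, where the naive bound is insufficient, the refined estimates $E>\tfrac\pi2(1-\tfrac m2)$, $K>\tfrac\pi2$ from \eqref{4.2}; (4) in each borderline case, clear $\zeta$ via $\zeta^2=4+(a-b)^2$ to reduce to a polynomial inequality in $(a,b)$ on the semialgebraic set $\{a>1,\ ab>1,\ b\le a\}$ and verify it; (5) conclude $N_{ij}/Z_{ij}>0$, i.e. the relevant partial derivative is strictly positive. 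The main obstacle I anticipate is step (4) — controlling the mixed-sign numerators $N_{21}$, $N_{12}$ (and possibly $N_{11}$), where $X_{ij}$ and $Y_{ij}$ have opposite signs and the crude bound $0<E/K<1$ does not by itself force $X_{ij}\tfrac EK+Y_{ij}$ to be of one sign; there one genuinely needs a sharp two-sided estimate on $E/K$ and a somewhat delicate polynomial positivity argument, and getting the estimate tight enough on all of $\Sigma'$ (which extends to $b\to 0^+$ via $ab\to1^+$ and to $a\to\infty$) is the place where the proof requires real care rather than bookkeeping.
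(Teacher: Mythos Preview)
Your overall scaffolding (track the sign of each $Z_{ij}$, then control the numerator $N_{ij}=X_{ij}E+Y_{ij}K$) matches the paper's, but the execution of the numerator step is where your plan has a real gap, precisely at the point you flag. In the mixed-sign cases the required control is tight: what is actually needed (for $\partial_a\Phi_1$, say) is $K(m)/E(m)>(2+m)/2$ with $m=(a-b)/a$. The power-series bounds you list give at best $K/E>4/(4-m)$ (from $K>\tfrac\pi2$ and $E<\tfrac\pi2(1-\tfrac m4)$), and one checks $4/(4-m)<(2+m)/2$ on $(0,1)$, so these estimates are \emph{not} sharp enough. Moreover, even granted the correct inequality on $E/K$, replacing it still leaves a genuine two-variable inequality in $(a,m)$, because the coefficients $X_{ij},Y_{ij}$ depend on $a$ as well; ``clear $\zeta$ and reduce to a polynomial in $(a,b)$'' does not work here, since $E/K$ is a transcendental function of $m=(a-b)/a$ and cannot be absorbed into a semialgebraic statement.

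The paper's device is the substitution $b=(1-m)a$ followed by a monotonicity argument in $a$: for fixed $m\in(0,1)$ one computes that $f_m(a):=\tilde X_{11}(a,m)/\tilde Y_{11}(a,m)$ is strictly decreasing with $\lim_{a\to\infty}f_m(a)=-(2+m)/2$, so showing $N_{11}$ has the right sign reduces to the single-variable inequality $K(m)/E(m)>(2+m)/2$. That inequality is then proved by a second monotonicity step: the derivative of $m\mapsto K/E-(2+m)/2$ is manifestly positive, and the function vanishes at $m=0$. The diagonal case $b=a$ ($m=0$) is handled separately via the expansions \eqref{4.2}, and the remaining three partial derivatives follow the same pattern. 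The idea you are missing is exactly this monotonicity-in-$a$ reduction that collapses the two-variable problem to a one-variable one; without it, your step~(4) does not go through.
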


\begin{proof}
We begin by proving the following.

\begin{lemma}\label{lemma4I}
$\partial_a\Phi_1|_{(a,b)}>0$, for every $(a,b)\in \Sigma'$.
\end{lemma}

\begin{proof}[Proof of Lemma \ref{lemma4I}]
Consider $a>1$ and $b\in (0,a]$. Let $b=(1-m)a$, with $m\in [0,1)$, and let
$\tilde{z}(a,m):=\sqrt{4+m^2a^2}$.
From \eqref{4.4}$(\mathrm {i})$, we have
\[
\partial_a\Phi_1|_{(a,(1-m)a)}=\frac{\tilde{X}_{11}(a,m)E(m)
  +\tilde{Y}_{11}(a,m)K(m)}{\tilde{Z}_{11}(a,m)},
  \]
where
\[
\begin{cases}
\tilde{X}_{11}(a,m)=\sqrt{2}\left(-2\tilde{z}(a,m)+(2-m)a \left((1-m)a(ma-\tilde{z}(a,m))-4)\right) \right),\\
\tilde{Y}_{11}(a,m)=2\sqrt{2}\left((4\!-\!3m)a\!-\!(1\!-\!m)^2ma^3\!+\!\tilde{z}(a,m)
\!+\!(1\!-\!m)^2a^2\tilde{z}(a,m) \right),\\
\tilde{Z}_{11}(a,m)=\pi ma\tilde{z}(a,m)(\tilde{z}(a,m)-am)\left((2-m)a+\tilde{z}(a,m)\right)^{3/2}.
\end{cases}
\]

First, we prove that $\partial_a\Phi_1|_{(a,(1-m)a)}>0$, for every $m\in (0,1)$.
%
%
 If $m\in (0,1)$, then $\tilde{Z}_{11}(a,m)>0$. So, if we set $f_m(a)=\tilde{X}_{11}(a,m)/\tilde{Y}_{11}(a,m)$,
it suffices to show that $f_m(a)+K(m)/E(m)>0$. The derivative of $f_m(a)$
with respect to $a$ is
\[
  f_m'(a)=- \frac{2m}{4(2\!-\!m)a\!+\!(2\!-\!m)m^2a^3\!+\!2\tilde{z}(a,m)\!+\!(2\!-\!(2\!-\!m)m)a^2\tilde{z}(a,m)}.
   \]
Thus, $f_m$ is a strictly decreasing function such that
$\lim_{a\to \infty} f_m(a) = -(2+m)/2$. This implies that
\[
  f_m(a)+\frac{K(m)}{E(m)}>\lim_{a\to \infty} f_m(a) + \frac{K(m)}{E(m)} = \frac{K(m)}{E(m)}-\frac{2+m}{2}.
   \]
Deriving the function $\tilde{f}(m)=K(m) E(m)^{-1}-(2+m)/2$, we find
\[
 \tilde{f}'|_m = \frac{(E(m)-K(m))^2}{2mE(m)^2}+\frac{m E(m)^2}{2(1-m)E(m)^2}>0.
  \]
Then, $\tilde{f}$ is strictly increasing and $\tilde{f}(m)>\lim_{m\to 0}\tilde{f}=0$,
for every $m\in (0,1)$.
This implies $\partial_a\Phi_1|_{(a,(1-m)a)}>0$, for every $m\in (0,1)$.
\vskip0.1cm

Next, we prove that $\partial_a\Phi_1|_{(a,a)}>0$.
%
Using the power series expansions of $K(m)$ and $E(m)$, we find
\[
\begin{cases}
 \tilde{X}_{11}(a,m)E(m)+\tilde{Y}_{11}(a,m)K(m)=\frac{\pi}{\sqrt{2}}(2a+ \tilde{z}(a,m))m+ o(m),\\
 \tilde{Z}_{11}(a,m)=\pi a \sqrt{a}\tilde{z}(a,m)^2((2-m)a+\tilde{z}(a,m))^{3/2}m+o(m).
  \end{cases}
    \]
From this, we have
\[
  \partial_a\Phi_1|_{(a,a)}=\lim_{m\to 0}\partial_a\Phi_1|_{(a,(1-m)a)} = \frac{1}{8a^{3/2}\sqrt{1+a}}>0.
    \]
\end{proof}

By similar arguments as in the proof of Lemma \ref{lemma4I}, one can prove that,
for every $(a,b)\in \Sigma'$, $\partial_b\Phi_1|_{(a,b)}>0$,
$\partial_a\Phi_2|_{(a,b)}>0$, and $\partial_b\Phi_2|_{(a,b)}>0$,
which completes the proof of Proposition \ref{Prop4.I}.
\end{proof}

\subsection{The image of $\Phi$}

Let $\Sigma$ be as in \eqref{Sigma}. We will now prove the following.

\begin{prop}\label{prop4.II}
The Jacobian of $\Phi$ is strictly positive on $\Sigma$.
In particular, the image $\Phi(\Sigma)$ is a connected open set and $\Phi:\Sigma\to
\Phi(\Sigma)$ is a local diffeomorphism.
\end{prop}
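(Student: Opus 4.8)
The three assertions following the first sentence are formal consequences of it: once the Jacobian determinant
\[
 J\Phi = \partial_a\Phi_1\cdot\partial_b\Phi_2 - \partial_b\Phi_1\cdot\partial_a\Phi_2
\]
is nowhere zero on the connected open set $\Sigma$, the inverse function theorem makes $\Phi$ a local diffeomorphism and $\Phi(\Sigma)$ open, and $\Phi(\Sigma)$ is connected as the continuous image of $\Sigma$. So the whole point is to prove $J\Phi>0$ on $\Sigma$. I would first note that $\Sigma\subset\Sigma'$, the region of Proposition~\ref{Prop4.I}: if $(a,b)\in\Sigma$ then $ab>1$ with $a>0$ forces $b>0$, and $a>b$ then gives $a^2>ab>1$, so $a>1$ and $0<b\le a$. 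Hence, by Proposition~\ref{Prop4.I}, the four partials appearing in $J\Phi$ are all strictly positive on $\Sigma$. This alone does not settle the sign of $J\Phi$; what is still needed is the sharper inequality $\partial_a\Phi_1\cdot\partial_b\Phi_2>\partial_b\Phi_1\cdot\partial_a\Phi_2$, i.e., after dividing by the positive quantity $\partial_b\Phi_1\,\partial_b\Phi_2$,
\[
 \frac{\partial_a\Phi_1}{\partial_b\Phi_1} > \frac{\partial_a\Phi_2}{\partial_b\Phi_2}
 \qquad\text{on } \Sigma ,
\]
the geometric meaning being that the level curves of $\Phi_1$ are everywhere steeper than those of $\Phi_2$.

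To prove this I would substitute the closed formulas \eqref{4.4} for the four partials. Writing $m=(a-b)/a\in(0,1)$, $E=E(m)$, $K=K(m)$, and $\zeta=\sqrt{4+(a-b)^2}$, each partial has the form $(X_{ij}E+Y_{ij}K)/Z_{ij}$ with $X_{ij},Y_{ij},Z_{ij}$ explicit in $a,b,\zeta$. Plugging these into $J\Phi$ and clearing the common denominator $Z_{11}Z_{22}Z_{12}Z_{21}$ --- whose sign on $\Sigma$ is read off from the identities $\zeta^2=4+(a-b)^2$, $(a-b-\zeta)(a-b+\zeta)=-4$, $(a+b-\zeta)(a+b+\zeta)=4(ab-1)>0$ and $a+b>\zeta$ --- reduces the claim, up to that known sign, to the strict positivity of a quadratic form in $E$ and $K$,
\[
 \mathcal{Q}(a,b)=\alpha(a,b)\,E^2+\beta(a,b)\,EK+\gamma(a,b)\,K^2 ,
\]
whose coefficients $\alpha,\beta,\gamma$ are explicit rational functions of $a,b,\zeta$ (obtained, as in \eqref{4.4}, with the help of {\sl Mathematica}, after repeated use of $\zeta^2=4+(a-b)^2$). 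Since $K>0$, this is equivalent to $\alpha\,t^2+\beta\,t+\gamma>0$ with $t=E(m)/K(m)\in(0,1)$. It may be convenient here to first pass to the variables $(\mu^2,\upsilon^2)$, for which $\mu^2+\upsilon^2=a+b$ and $\mu^2\upsilon^2=ab-1$, in order to simplify $\alpha,\beta,\gamma$.

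To close the argument I would use the classical inequalities for the complete integrals: $E<K$, the ratio $E/K$ strictly decreasing from $1$ at $m=0$ to $0$ as $m\to1$, the two-sided bounds $\tfrac{\pi}{2}\sqrt{1-m}\le E\le\tfrac{\pi}{2}\le K\le\tfrac{\pi}{2}(1-m)^{-1/2}$, and the Cauchy--Schwarz bound $EK\ge(\pi/2)^2$, together with the series \eqref{4.2} for $m$ near $0$ and the logarithmic blow-up of $K$ as $m\to1$. As in the proof of Lemma~\ref{lemma4I}, the most economical route appears to be the substitution $b=(1-m)a$: then $\Sigma$ becomes $\{m\in(0,1),\,a>(1-m)^{-1/2}\}$, and $\mathcal{Q}$ becomes a rational function of $a$, $m$ and $\widetilde{\zeta}(a,m)=\sqrt{4+m^2a^2}$ times a quadratic in $E(m),K(m)$; one then fixes $m$, exploits monotonicity in $a$ (the $a$-derivative being once more a manifestly signed rational expression, as in Lemma~\ref{lemma4I}), and treats separately the two boundary regimes $m\to0^+$ (where $b\to a$ and \eqref{4.2} controls the leading term of $\mathcal{Q}$) and $m\to1^-$, i.e., $a\to\infty$ (where $b\to0^+$ along $ab>1$). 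This gives $J\Phi>0$ on $\Sigma$, and the stated conclusions follow.

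The hard part will be this last step. For the single partials of Proposition~\ref{Prop4.I} positivity reduced to one inequality of the form $K(m)/E(m)>g(a,b)$, because each partial is a single $E$-term plus a single $K$-term; the Jacobian, by contrast, is genuinely quadratic in $(E,K)$, its coefficients $\alpha,\beta,\gamma$ do change sign over $\Sigma$, and the discriminant $\beta^2-4\alpha\gamma$ need not be of constant sign, so no termwise estimate suffices. The real difficulty is to isolate the precise combination of $E$--$K$ inequalities (or the right one-variable reduction plus monotonicity argument) that makes the positivity of $\mathcal{Q}$ transparent uniformly on $\Sigma$ --- in particular near both pieces of its boundary ($b=a$ and the hyperbola $ab=1$).
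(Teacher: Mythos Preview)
Your outline is essentially the paper's approach---compute $J\Phi$ from the explicit formulas \eqref{4.4} and show the resulting expression is positive---but you vastly overestimate the difficulty of the final step. When you actually carry out the substitution (the paper does it with {\sl Mathematica}), the quadratic form $\mathcal{Q}=\alpha E^2+\beta EK+\gamma K^2$ collapses: one finds $\gamma=0$, and in fact
\[
 J\Phi(a,b)=\frac{E(m)\bigl(-2aE(m)+(a+b)K(m)\bigr)}{2\pi^2 a(a-b)b\sqrt{(4+(a-b)^2)(ab-1)}},\qquad m=\frac{a-b}{a}.
\]
The denominator is manifestly positive on $\Sigma$, and with $b=(1-m)a$ the numerator is $aE(m)\bigl((2-m)K(m)-2E(m)\bigr)$. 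So the whole problem reduces to the single elementary inequality $(2-m)K(m)>2E(m)$ for $m\in(0,1)$, which follows at once by differentiating (equality at $m=0$, derivative $\tfrac{E-(1-m)K}{2(1-m)}>0$). None of the delicate machinery you list---two-sided bounds, Cauchy--Schwarz, discriminant analysis, monotonicity in $a$ at fixed $m$, boundary regimes---is needed.

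The moral is that the cancellation hidden in \eqref{4.4} is much stronger for the $2\times2$ determinant than for any individual partial: the ``hard part'' you anticipate simply does not materialise once the algebra is done. Your proposal is not wrong, but it is a plan for a battle that never happens; the honest proof is to perform the computation and report the simple closed form.
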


\begin{proof}
Let $\Phi_*$ be the matrix of the differential of $\Phi$. Using \eqref{4.4},
the Jacobian $J\Phi(a,b)$ of $\Phi$ at $(a,b)$ is given by
\[
 J\Phi(a,b) =\mathrm{det}(\Phi_*)|_{(a,b)}=\frac{E(\frac{a-b}{b})\left(-2aE(\frac{b-a}{a})
         +(a+b)K(\frac{a-b}{a}) \right)}{2\pi^2a(a-b)b  \sqrt{(4+(a-b)^2)(ab-1)}}.
      \]
Let $w(a,b)$ denote the numerator of the right hand side.
If we let $b=(1-m)a$, where $m\in (0,1)$ and $a>1/\sqrt{1-m^2}$, then
\[
  w(a,(1-m)a)=a(-2E(m)+(2-m)K(m))>0,\quad \forall\, m\in (0,1),
    \]
which implies the required result.
\end{proof}

We adopt the following conventions:
\begin{itemize}
\item $\Phi=(\Phi_1,\Phi_2)$ is considered as a function defined on $\Sigma$;
\item $\widetilde{\Phi}$ denotes the analytic extension of $\Phi$ to $\widetilde{\Sigma}=\{(a,b) \,:\, a>0,\, ab \ge 1\}$;
\item $\widehat{\Phi}$  denotes the restriction of $\tilde{\Phi}$ to the closure $\bar{\Sigma}$ of $\Sigma$.
\end{itemize}

Next, we prove the following.

\begin{prop}\label{prop4.III}
The mapping $\Phi$ is a real-analytic local diffeomorphism onto
$\widetilde{\Omega}$.
\end{prop}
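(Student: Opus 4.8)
The plan is to combine the local-diffeomorphism statement already available (Proposition \ref{prop4.II}) with a careful boundary analysis of $\widehat\Phi$ on $\bar\Sigma$, and then invoke a global-injectivity criterion of Hadamard--Caccioppoli type. Since Proposition \ref{prop4.II} gives that $\Phi$ is a local diffeomorphism of $\Sigma$ onto the open connected set $\Phi(\Sigma)$, it remains to (a) identify $\Phi(\Sigma)$ with $\widetilde\Omega$, and (b) show $\Phi$ is injective. First I would study the boundary $\partial\Sigma$, which in $\mathbb R^2$ consists of the hyperbola arc $\{ab=1,\ a>1\}$ together with the half-line $\{b=0,\ a>0\}$ and (as $a\to\infty$ along $b=a$, and along the hyperbola asymptotes) the ``ends at infinity''. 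On $\{ab=1\}$ the extension $\widetilde\Phi$ from Proposition \ref{prop4.I}'s domain is still defined, and one computes the limiting values of $(\Phi_1,\Phi_2)$ there using the closed forms \eqref{2.2.3} together with the behaviour of $\mu(a,b),\upsilon(a,b)$ when $ab=1$: note $\mu^2\upsilon^2 = ab-1 \to 0$ and $\mu^2+\upsilon^2 = a+b$, so one of $\mu,\upsilon$ degenerates. I expect the image of the hyperbola arc to be the circular arc $x^2+y^2=1/2$, the image of $b\to0^+$ to be the segment $x=-1/2$, and the ``point at infinity'' $a=b\to\infty$ to correspond to the corner of $\widetilde\Omega$ where $x=-1/\sqrt2$; the remaining boundary piece $x=-1/\sqrt2$ should arise from the limit $b\to-\infty$ (equivalently $a-b\to\infty$ with $b<0$), using that $\cn$ has modulus $\to 1$ and $K(m)\to\infty$ there. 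Carrying out these four boundary limits — using \eqref{4.2}, the expansions of $K,E$ near $m=1$, and the formulas \eqref{2.2.3} — is the bulk of the computation; I would present each limit as a short lemma.

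Once the boundary behaviour is pinned down, the global structure follows by a standard argument. Consider $\widehat\Phi:\bar\Sigma\to\mathbb R^2$. The closure $\bar\Sigma$ is homeomorphic to a closed disk (a ``curvilinear triangle'' with one vertex at infinity), and its boundary maps, by the limits just computed, homeomorphically onto $\partial\widetilde\Omega$, which is a Jordan curve (two line segments and a circular arc meeting at three corners). Since $\widehat\Phi$ restricted to $\partial\bar\Sigma$ is a homeomorphism onto the Jordan curve $\partial\widetilde\Omega$, and $\Phi$ is an orientation-preserving local diffeomorphism on the interior $\Sigma$ (positive Jacobian, Proposition \ref{prop4.II}), a degree/argument-principle argument shows that $\Phi$ maps $\Sigma$ onto the interior $\widetilde\Omega$ with mapping degree $1$ — hence, being a local diffeomorphism with degree $1$ onto a simply connected target, it is a global diffeomorphism $\Sigma\xrightarrow{\ \sim\ }\widetilde\Omega$. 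Concretely: for any $w\in\widetilde\Omega$ the winding number of $\widehat\Phi|_{\partial\bar\Sigma}$ about $w$ equals $1$, so $\#\Phi^{-1}(w)$ counted with (positive) sign equals $1$, giving both surjectivity onto $\widetilde\Omega$ and injectivity; and no point of $\widetilde\Omega^c$ is hit, so $\Phi(\Sigma)=\widetilde\Omega$ exactly.

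The main obstacle I anticipate is the boundary-at-infinity analysis: the vertex of $\bar\Sigma$ at $a=b\to\infty$ is not an honest point, so one must check that $\widehat\Phi$ extends continuously there (to the corner $(-1/\sqrt2,0)$ of $\widetilde\Omega$) and that no image mass ``escapes to infinity''; equivalently, that $\Phi$ is proper as a map $\Sigma\to\widetilde\Omega$. For this I would verify the uniform estimate that $\Phi(a,b)\to\partial\widetilde\Omega$ whenever $(a,b)\to\partial\Sigma$ in $\bar{\mathbb R}^2$, using the explicit formulas \eqref{2.2.3} and monotonicity of $E/K$; properness plus local-diffeomorphism plus connectedness of $\widetilde\Omega$ then immediately yields that $\Phi$ is a covering map onto $\widetilde\Omega$, and since $\widetilde\Omega$ is simply connected the covering is trivial of some degree $d$, with $d=1$ forced by the boundary homeomorphism. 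A secondary nuisance is bookkeeping the three boundary arcs and their images consistently (orientations, which arc goes to which side of $\widetilde\Omega$); I would fix an explicit parametrization of $\partial\bar\Sigma$ and track $(\Phi_1,\Phi_2)$ along it, so that the claimed homeomorphism onto $\partial\widetilde\Omega = \{x=-1/2\}\cup\{x=-1/\sqrt2\}\cup\{x^2+y^2=1/2\}$ is transparent. Everything else — analyticity, the local-diffeomorphism property — is already in hand from Propositions \ref{Prop4.I} and \ref{prop4.II}.
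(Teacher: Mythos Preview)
Your proposal has a concrete error in the description of $\partial\Sigma$. Since $\Sigma=\{(a,b):a>0,\ a>b,\ ab>1\}$ and the conditions $a>0$, $ab>1$ force $b>1/a>0$, the domain $\Sigma$ lies entirely in the half-plane $b>0$; there is no boundary piece $\{b=0,\ a>0\}$, and the regime $b\to-\infty$ never occurs. The actual finite boundary of $\Sigma$ consists of the diagonal $\partial_+\Sigma=\{(a,a):a\ge1\}$ and the hyperbola arc $\partial_-\Sigma=\{(a,1/a):a\ge1\}$, meeting at $V=(1,1)$. Under $\widehat\Phi$ the diagonal goes to the circular arc $\partial_+\widetilde\Omega$ (explicitly $(a,a)\mapsto \tfrac12(-\sqrt{(a+1)/a},\sqrt{(a-1)/a})$) and the hyperbola goes to the segment $\partial_-\widetilde\Omega\subset\{y=0\}$; this is the reverse of the correspondence you predicted. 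The third side $\partial_0\widetilde\Omega=\{x=-1/2\}$ is not the image of any finite boundary piece: it arises entirely from the behaviour as $a\to\infty$ with $b$ ranging over $(1/a,a)$, so the ``vertex at infinity'' you invoke is in fact an entire arc at infinity. In particular, $a=b\to\infty$ lands at $Q_3=(-1/2,1/2)$, not at $(-1/\sqrt2,0)$, and a one-point compactification of $\bar\Sigma$ cannot produce a Jordan curve mapping homeomorphically onto $\partial\widetilde\Omega$.

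With the boundary corrected, your degree/properness strategy can be made to work, but it requires compactifying $\bar\Sigma$ by an interval at infinity (e.g.\ parametrised by the limiting ratio $b/a\in[0,1]$) and verifying that $\widehat\Phi$ extends continuously to that arc with image exactly $\partial_0\widetilde\Omega$; this is more delicate than you indicate. The paper sidesteps the compactification issue. It first shows $\Phi(\Sigma)\subset\widetilde\Omega$ by fixing $a>1$ and using the monotonicity in Proposition~\ref{Prop4.I} to bound $\Phi_1,\Phi_2$ along the segment $b\in(1/a,a)$ between their boundary values; then it proves $\Phi(\Sigma)=\widetilde\Omega$ by an open--closed argument, where escape of preimage sequences to infinity is excluded via the key Lemma~\ref{4.V} ($a_n\to\infty$ forces $\Phi_1(a_n,b_n)\to-1/2$). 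Note also that Proposition~\ref{prop4.III} asserts only that $\Phi$ is a local diffeomorphism with image $\widetilde\Omega$; injectivity is handled separately in Proposition~\ref{prop4.IV} by a level-curve argument, not by degree theory.
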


\begin{proof}
By Proposition \ref{prop4.II}, it suffices to prove
that $\widetilde{\Omega}$ is the image of $\Phi$. The boundary of $\Sigma$ consists of the simple arcs
\[
  \partial_+\Sigma=\{(a,a) \,:\, a\ge 1\},\quad \partial_{-}\Sigma =\{(a,1/a) \,:\, a\ge 1\}
    \]
which intersect at the point $V=(1,1)$, while the boundary of $\widetilde{\Omega}$
is made of the three simple arcs
\[
\begin{aligned}
& \partial_{-}\widetilde{\Omega}=\{(a,0) \,:\, a\in [-1/\sqrt{2},-1/2]\},\\
& \partial_0 \widetilde{\Omega} =\{(-1/2,b) \,:\, b\in [0,1/2]\},\\
&\partial_{+}\widetilde{\Omega} =\{(a,\sqrt{a^2-1/2})\,:\, a\in [-1/\sqrt{2},-1/2]\}
\end{aligned}
\]
with vertices $Q_1=(-1/\sqrt{2},0)$, $Q_2=(-1/2,0)$ and $Q_3=(-1/2,-1/2)$, respectively.
The restriction $ \Phi_{+}$ of $\widehat{\Phi}$ to $\partial_+\Sigma$ is given by
\[
 \Phi_{+}: (a,a) \in \partial_+\Sigma \mapsto \frac{1}{2} \left(-\sqrt{(a+1)a^{-1}},
            \sqrt{(a-1)a^{-1}}\right)
   \in \partial_{+}   \widetilde{\Omega}\setminus\{Q_3\}.
    \]
%
%
It is then a diffeomorphism of $\partial_+\Sigma $ onto $\partial_{+}\widetilde{\Omega}\setminus\{Q_3\}$.
Similarly, the restriction $\Phi_{-}$ of $\widehat{\Phi}$ to $\partial_{-}\Sigma$ is
the diffeomorphism onto $\partial_{-}\widetilde{\Omega}\setminus\{Q_2\}$ given by
\[
  \Phi_{-}:(a,1/a)\in \partial_{-}\Sigma \mapsto \left(\!-\frac{1}{\pi}\sqrt{1\!+\!a^2}\Pi(1\!-\!a^2,(a^2\!-\!1)a^{-2}), 0\right)\!\in
   \partial_{-}\widetilde{\Omega}\setminus\{Q_2\}.
    \]
 Then $\widehat{\Phi}$ maps the boundary of $\Sigma$ to the boundary of $\widetilde{\Omega}$.
Next, we show that the image of $\Phi$ is contained in $\widetilde{\Omega}$.
For, fix $a>1$ and consider the curve defined by
$\phi_a (b)=\Phi(a,b)$, for every  $b\in (1/a,a)$. From Proposition \ref{Prop4.I},
 we know that the components $\phi^1_a$ and $\phi^2_a$ of $\phi_a$ are increasing functions.
 This implies
\[
 \begin{split}
  -\frac{1}{\sqrt{2}}  <\phi_a^1(a^{-1})&=-\pi^{-1} \sqrt{1+a^2}\Pi(1-a^2,(a^2-1)  a^{-2})\\
&<\phi_a^1(b)<\phi_a^1(a)= -\frac{1}{2}\sqrt{(a+1) a^{-1}}<-\frac{1}{2},
\end{split}
\]
and
\[
 0=\phi_a^2(1/a)<\phi_a^2(b)<\phi_a^2(a)=\frac{1}{2}\sqrt{(a-1)  a^{-1}}.
 \]
  Combining these two inequalities, we find
\[
 0<\phi_a^1(b)^2+\phi_a^2(b)^2<\phi_a^1(a)^2+\phi_a^2(a)^2=1/2.
  \]
This shows that the image of $\Phi$ is contained in $\widetilde{\Omega}$. To prove equality, we need
the following technical lemma.

\begin{lemma}\label{4.V}
Let $\{(a_n,b_n)\}_{n\in \mathbb{N}}\subset \Sigma$ be a sequence such that $a_n\to \infty$. Then, $\lim_{n\to \infty} \Phi_1(a_n,b_n)=-1/2$.
\end{lemma}

 \begin{proof}
 The trajectory of $\phi_a$ is the graph of an increasing function
\[
 f_a:(\Phi_1(a,a^{-1}),\Phi_1(a,a))\to \R.
  \]
Given the sequence $\{(a_n,b_n)\}_{n\in \mathbb{N}}$, we write $\Phi(a_n,b_n)=(t_n, f_{a_n}(t_n))$,
where $t_n$ is an element of the open interval $(\Phi_1(a_n,1/a_n),\Phi_1(a_n,a_n))$.
From the limits
\[
\begin{aligned}
 \lim_{a\to \infty} \widehat{\Phi}_1(a,a^{-1}) &= - \lim_{a\to \infty} \frac{1}{\pi}\sqrt{1+a^2}\Pi(1-a^2,(a^2-1)a^{-2}) = -\frac{1}{2},\\
 \lim_{a\to \infty} \widehat{\Phi}_1(a,a) &= - \lim_{a\to \infty} \frac{1}{2}\sqrt{(1+a)a^{-1}}=-\frac{1}{2},
\end{aligned}
\]
we have
\[
 -\frac{1}{2} = \lim_{n\to \infty} t_n = \lim_{n\to \infty}\Phi_1(a_n,b_n).
   \]
\end{proof}
By Proposition \ref{prop4.II}, $\Phi:\Sigma\to \widetilde{\Omega}$
is a real-analytic local diffeomorphism onto its image, and hence an open map.
Consequently, $X=\widetilde{\Omega}\setminus\Phi(\Sigma)$ is a closed subset of $\widetilde{\Omega}$.
The proof is complete if we show that $\widetilde{\Omega}\setminus\Phi(\Sigma)$ is also open.
Suppose it is not open. Then, there is a point $Q\in X$, such that any open disk $D(Q,1/n)$ centered at $Q$
with radius $1/n$, $n\in \mathbb{N}$, intersects $\Phi(\Sigma)$. For every $n\in \mathbb{N}$,
choose $Q_n\in D(p,1/n)\cap \Phi(\Sigma)$ and $P_n\in \Sigma$, such that $\Phi(P_n)=Q_n$.
Two possibilities may occur: either $\{P_n\}$ is bounded, or else $\{P_n\}$ is unbounded.
In the first case, we may assume that $\{P_n\}$ converges to a limit point $P$. By construction,
$P$ belongs to $\bar{\Sigma}$. If $P$ is an element of $\Sigma$, then $Q=\Phi(P)$,
which contradicts the fact that
$Q\notin \Phi(\Sigma)$.
So, $P$ is an element of the boundary $\partial \Sigma$. This implies that $Q=\widehat{\Phi}(P)$.
On the other hand, $\widehat{\Phi}$ maps the boundary of $\Sigma$ onto the boundary of $\widetilde{\Omega}$. Consequently, $Q$ would be an element of $\partial \widetilde{\Omega}$, which is absurd, since $Q\in \widetilde{\Omega}$. If  $\{P_n\}$ is unbounded, by Lemma \ref{4.V}, the sequence made up with the
abscissae of the points $Q_n=\Phi(P_n)$ converges to $-1/2$. So, the first coordinate of $Q$
would be equal to $-1/2$ and hence $Q\notin \widetilde{\Omega}$, which is a contradiction.
This concludes the proof of Proposition \ref{prop4.III}.
\end{proof}

\subsection{Injectivity of $\Phi$}

By Proposition \ref{Prop4.I}, we know that the first order partial derivatives of $\Phi_1$ and $\Phi_2$
are strictly positive on $\Sigma'$. Then, there is an open neighborhood $W$ of $\Sigma'$ such that
the first order partial derivatives of $\Phi_1$ and $\Phi_2$ are positive on
$W'=W\cap \mathrm{Int}(\widetilde{\Sigma})$.  On this set, consider the nowhere
vanishing vector fields (see Figure \ref{FIG2})
\begin{equation}\label{4.6}
U_1=\left(1,-\frac{\partial_a \Phi_1}{\partial_b\Phi_1}\right),\quad
U_2=\left(1,-\frac{\partial_a\Phi_2}{\partial_b\Phi_2}\right).
\end{equation}
\begin{figure}[ht]
\begin{center}
\includegraphics[height=6cm,width=6cm]{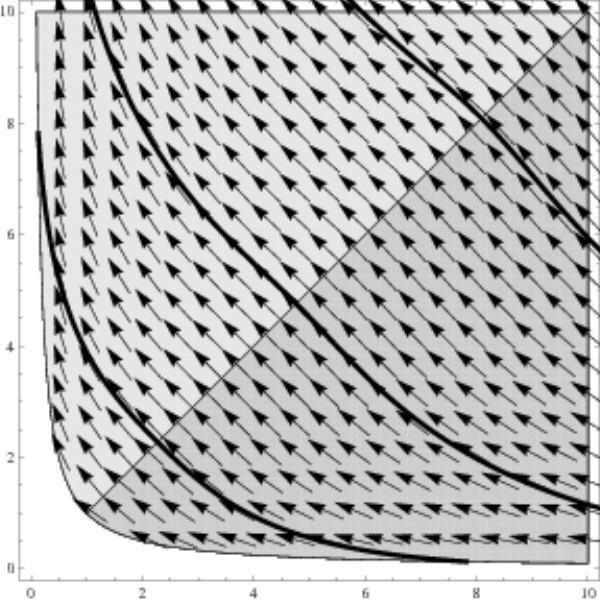}
\includegraphics[height=6cm,width=6cm]{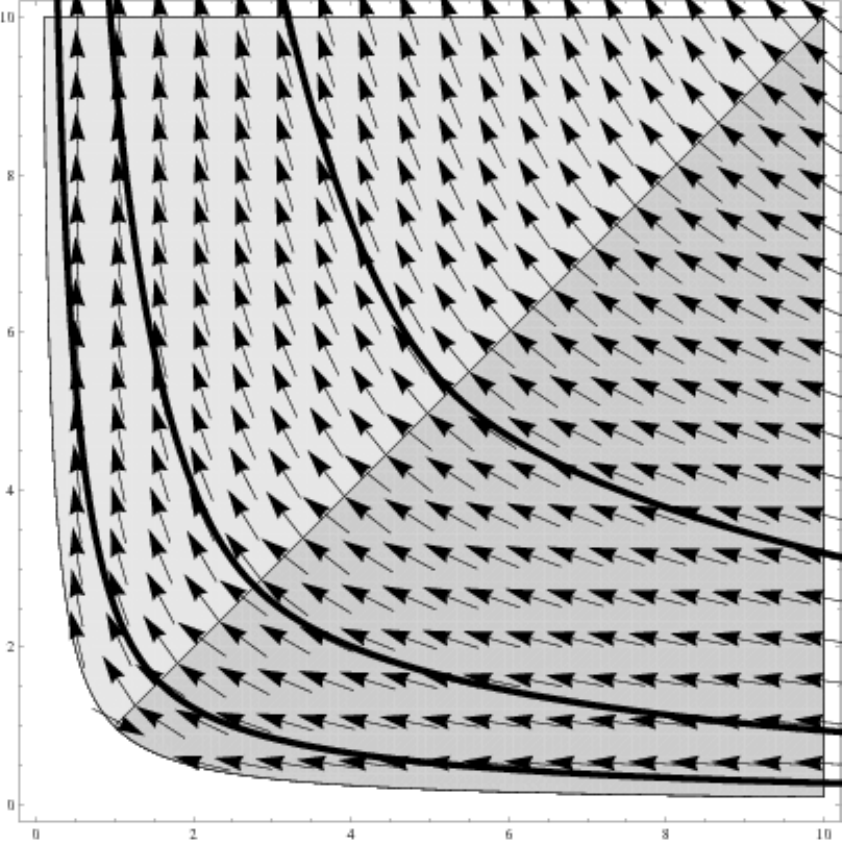}
\caption{The vector fields $U_1/\|U_1\|$ and $U_2/\|U_2\|$, with some
of their integral curves.}\label{FIG2}
\end{center}
\end{figure}
 By construction, the trajectories of the integral curves of $U_1$ and $U_2$ are graphs of strictly decreasing functions, and hence they intersect $\partial_+ \Sigma$ in at most one point.
 In addition, for every  $Q=(x,y)\in \widetilde{\Omega}$, we have
\begin{itemize}
\item the connected components of the level curve $\mathcal{V}_1(x)=\Phi_1^{-1}(x)\cap \Sigma$ are contained in the intersection of a trajectory of $U_1$ with $\Sigma$.
\item the connected components of the level curve $\mathcal{V}_2(y)=\Phi_2^{-1}(y)\cap \Sigma$ are contained in the intersection of a trajectory of $U_2$ with $\Sigma$.
\end{itemize}

We now prove the following.
\begin{lemma}\label{4.VI}
The level curve $\mathcal{V}_1(x')$ is connected, for every $Q'=(x',y')\in \widetilde{\Omega}$.
\end{lemma}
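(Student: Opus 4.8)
I want to show that for every $x'$ with $-1/\sqrt 2<x'<-1/2$, the level set $\mathcal V_1(x')=\Phi_1^{-1}(x')\cap\Sigma$ is connected. The key fact already available is that each connected component of $\mathcal V_1(x')$ lies inside a single integral curve of the vector field $U_1$, and that integral curves of $U_1$ are graphs of strictly decreasing functions on $W'$, hence meet the diagonal edge $\partial_+\Sigma=\{(a,a):a\ge 1\}$ in at most one point. So it suffices to prove: (i) each component of $\mathcal V_1(x')$ actually reaches all the way to $\partial_+\Sigma$ (more precisely, its closure touches $\partial_+\Sigma$), and (ii) the restriction $\Phi_+$ of $\widehat\Phi$ to $\partial_+\Sigma$ is injective in its first coordinate, so that at most one point of $\partial_+\Sigma$ can have $\Phi_1=x'$. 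Together these force all components of $\mathcal V_1(x')$ to flow into the \emph{same} point of $\partial_+\Sigma$ along $U_1$; since distinct components cannot share an integral curve of $U_1$ (the curve is a graph, hence a $1$-manifold, and a $1$-manifold meeting the boundary arc in one point has a single end there), there is exactly one component.

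For step (ii): from the explicit formula
\[
\Phi_+(a,a)=\tfrac12\Bigl(-\sqrt{(a+1)a^{-1}},\,\sqrt{(a-1)a^{-1}}\Bigr),
\]
the first coordinate $a\mapsto -\tfrac12\sqrt{1+a^{-1}}$ is strictly increasing in $a$ on $[1,\infty)$, so it is injective, ranging over $[-1/\sqrt2,-1/2)$; in particular every admissible $x'$ is hit by exactly one point $(a_{x'},a_{x'})\in\partial_+\Sigma$. Similarly one checks that $\Phi_-$, the restriction to $\partial_-\Sigma=\{(a,1/a):a\ge1\}$, has first coordinate strictly \emph{decreasing} from $-1/2$ down to $-1/\sqrt2$ (this follows from $\partial_b\Phi_1>0$ on $\Sigma'$, pushing the value down as we move from the diagonal toward the hyperbola, combined with Lemma \ref{4.V} and the boundary formulas). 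These monotonicities along the two boundary arcs are the ingredients that make the level set "anchored" at a unique endpoint on each side.

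For step (i): let $C$ be a connected component of $\mathcal V_1(x')$. Since $\Phi$ is a local diffeomorphism with the Jacobian sign fixed (Proposition \ref{prop4.II}), $C$ is an embedded $1$-dimensional real-analytic submanifold of $\Sigma$, and it is a closed subset of $\Sigma$ (level set of a continuous function). It has no endpoint in the interior of $\Sigma$, so following it in either direction it must approach $\partial\Sigma$ (or escape to infinity). Because $C$ lies in an integral curve of $U_1$, and that integral curve is the graph $b=g(a)$ of a strictly decreasing function, the curve cannot run off to $a\to\infty$ while staying in $\Sigma$ with $\Phi_1$ bounded: by Lemma \ref{4.V}, $a\to\infty$ forces $\Phi_1\to -1/2\ne x'$, so the escape-to-infinity alternative is excluded in at least one direction, and along the other direction the graph is pinched between $\partial_-\Sigma$ and $\partial_+\Sigma$. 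On the hyperbola side $\partial_-\Sigma$, $\Phi_1$ takes the boundary value $\widehat\Phi_1(a,1/a)=-\tfrac1\pi\sqrt{1+a^2}\,\Pi(1-a^2,(a^2-1)a^{-2})$, which (again by the boundary monotonicity) is $<x'$ once $a$ is large enough, so a component with $\Phi_1\equiv x'$ cannot terminate on $\partial_-\Sigma$ either without contradicting the intermediate value theorem along $U_1$; hence it must terminate on $\partial_+\Sigma$. By (ii) there is only one candidate endpoint there, $(a_{x'},a_{x'})$, so every component of $\mathcal V_1(x')$ has $(a_{x'},a_{x'})$ in its closure.

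**Conclusion and the hard part.** Two distinct components $C_1\ne C_2$ would then both have the single point $(a_{x'},a_{x'})$ as a boundary limit; but they lie in integral curves of $U_1$, and uniqueness of integral curves through the point $(a_{x'},a_{x'})$ of the (continuously extended) field $U_1$ forces $C_1$ and $C_2$ to coincide near that point, hence to coincide (both being relatively closed in the single integral curve and connected). Therefore $\mathcal V_1(x')$ is connected. I expect the main obstacle to be step (i)---specifically, making rigorous that a component of the level set cannot spiral or accumulate pathologically inside $\Sigma$ and must genuinely hit $\partial_+\Sigma$: this requires carefully combining the graph structure of $U_1$-integral curves (no vertical asymptotes, strictly monotone), the sign of the Jacobian (so components are honest arcs without self-intersection or limit cycles), and the boundary-value comparisons $\widehat\Phi_1|_{\partial_-\Sigma}<x'<\widehat\Phi_1|_{\partial_+\Sigma}$ together with Lemma \ref{4.V} to rule out escape to infinity. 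The boundary monotonicity of $\Phi_1$ along $\partial_-\Sigma$ is the one computational point I would need to verify explicitly, presumably via the derivative formulas \eqref{4.3} for $\Pi$.
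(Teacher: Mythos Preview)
Your overall architecture matches the paper's: establish compactness of $\widehat\Phi_1^{-1}(x')$ via Lemma \ref{4.V}, show every component of $\mathcal V_1(x')$ limits to the unique point $P'_+=(a_{x'},a_{x'})$ on $\partial_+\Sigma$, then use a flow-box argument for $U_1$ at $P'_+$ to force all components to coincide. The flow-box step and the injectivity of $\widehat\Phi_1$ along $\partial_+\Sigma$ are correct and are exactly how the paper finishes.

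The gap is in your step (i). You try to argue that a component cannot terminate on $\partial_-\Sigma$, but this is \emph{false}: in fact each component $\Gamma$ has exactly two boundary points, and they are $P'_+\in\partial_+\Sigma$ \emph{and} $P'_-\in\partial_-\Sigma$. Your intermediate-value reasoning (``$\widehat\Phi_1|_{\partial_-\Sigma}<x'$ once $a$ is large enough'') does not exclude this: along $\partial_-\Sigma$ the function $\widehat\Phi_1$ ranges over $[-1/\sqrt2,-1/2)$ (the paper records that $\Phi_-$ is a diffeomorphism onto $\partial_-\widetilde\Omega\setminus\{Q_2\}$), so it hits $x'$ at precisely one point $P'_-$, and the component can and does land there. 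The paper's argument for reaching $P'_+$ is instead purely set-theoretic and bypasses this issue: by compactness (your Lemma \ref{4.V} step), the closure of a component $\Gamma$ is compact; since $\Gamma$ is an embedded arc that is closed in $\Sigma$, its two endpoints lie in $\partial\Sigma\cap\widehat\Phi_1^{-1}(x')=\{P'_+,P'_-\}$, and being two distinct points they must be exactly $P'_+$ and $P'_-$. In particular every component reaches $P'_+$, and you never need to rule out $\partial_-\Sigma$ at all. Consequently the ``boundary monotonicity of $\Phi_1$ along $\partial_-\Sigma$'' that you flagged as the hard computational point is irrelevant to the proof.
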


\begin{proof}
Consider the arcs $\partial_+\widetilde{\Omega}'=\partial_+\widetilde{\Omega}\setminus\{Q_1,Q_3\}$,
$\partial_-\widetilde{\Omega}'=\partial_-\widetilde{\Omega}\setminus\{Q_1,Q_2\}$,
$\partial_+ \Sigma'\!=\!\partial_+\Sigma \setminus\!\{V\}$ and $\partial_- \Sigma'\!=\!\partial_-\Sigma \setminus\!\{V\}$.
From the proof of Proposition~\ref{prop4.III}, we know that the maps
$\widehat{\Phi}|_{\partial_+ \Sigma'}:\partial_+ \Sigma'\to \partial_+\Omega'$ and
$\widehat{\Phi}|_{\partial_- \Sigma'}:\partial_- \Sigma'\to \partial_-\Omega'$
are real-analytic diffeomorphisms. For every $Q'=(x',y')\in \widetilde{\Omega}$, let $Q'_{\pm}$
denote the intersections of $\partial_{\pm}\widetilde{\Omega}'$ with the line $x=x'$ and let
$P'_+=(a'_+,a'_+)$ and $P'_- = (a'_-, 1/a'_{-})$ be the points of $\partial_{\pm} \Sigma'$,
such that $\widehat{\Phi}(P'_{\pm})=Q'_{\pm}$. Note that
$\widehat{\Phi}_1^{-1}(x')=\Phi_1^{-1}(x')\cup\{P'_+,P'_-\}$.
\vskip0.1cm

\noindent $\bullet$ First, we prove that $\widehat{\Phi}_1^{-1}(x')$ is compact.
By continuity, $\widehat{\Phi}_1^{-1}(x')$ is closed. Suppose that $\widehat{\Phi}_1^{-1}(x')$ is unbounded.
Then, there is a sequence $\{P_n\}=\{(a_n,b_n)\}\subset \Phi_1^{-1}(x')$, such that
$\lim_{n\to \infty}a_n=+\infty$. By Lemma \ref{4.V}, $\{\Phi_1(P_n)\}$ converges to $-1/2$.
On the other hand, the points $P_n$ belong to $\mathcal{V}_1(x')$, and hence $x'$, which is the limit of the sequence $\{\Phi_1(P_n)\}$, must be equal to
$-1/2$. But this is absurd, since $(x',y')\in \widetilde{\Omega}$ and the abscissae of the points in
$\widetilde{\Omega}$ are different from $-1/2$. This proves the compactness of $\widehat{\Phi}_1^{-1}(x')$.
\vskip0.1cm

\noindent $\bullet$ Next, we prove that the points $P'_{\pm}$ belong to the boundary of
any connected component of the level curve $\mathcal{V}_1(x')$. Let $C$
be a connected component of $\mathcal{V}_1(x')$. We know that $C$ is the graph of a strictly
decreasing function and that such a function is bounded, by the compactness of $\widehat{\Phi}_1^{-1}(x')$.
Since $C$ is an embedded curve, its
boundary
consists of two distinct points. On the other hand,
$C\subset \Phi_1^{-1}(x')\subset \widehat{\Phi}_1^{-1}(x')=\Phi_1^{-1}(x')\cup\{P'_+,P'_- \}$.
Using the fact that $\Phi_1^{-1}(x')$ is an embedded curve, we have that $P'_+$ and $P'_-$ are the
two boundary points of $C$.
\vskip0.1cm

\noindent $\bullet$ To complete the proof, we show that two connected components of the level curves
intersect each other.
The vector field $U_1$ does not vanishes at the point $P'_+$ and $P'_+$ belongs to the
boundary of $C$. Therefore, $C$ is contained in the trajectory of $U_1$ passing through $P'_+$.
Choose local coordinates $(u,v)$ defined on a cubical open neighborhood $A$ of the point
$P'_+$, such that $U_1|_{A}=\partial_u$ and $v(P'_+)=0$. In these coordinates, the intersection
$C\cap A$ consists of the points $P\in A$ such that $v(P)=0$ and $0<u(P)<\epsilon$, for some
$\epsilon>0$. So, if $C$ and $C'$ are two connected components of $\mathcal{V}_1(x')$,
there exist $\epsilon,\epsilon'>0$, such that $C\cap A = \{P\in A \,:\, v(P)=0,\, 0<u(P)<\epsilon\}$
and  $C'\cap A = \{P\in A \,:\, v(P)=0,\, 0<u(P)<\epsilon'\}$.
This implies that $C \cap C'\neq \emptyset$.
\end{proof}

A similar result also holds for the other family of level curves.

\begin{lemma}
The level curves $\mathcal{V}_2(y')$ are connected, for every $Q'=(x',y')\in \widetilde{\Omega}$.
\end{lemma}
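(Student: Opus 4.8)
The plan is to transcribe the proof of Lemma~\ref{4.VI}, replacing $\Phi_1$ by $\Phi_2$, $U_1$ by $U_2$, and the level value $x'$ by $y'$. The one genuine difference concerns which part of $\partial\widetilde\Omega$ is relevant. A vertical line $\{x=x'\}$ meets $\partial\widetilde\Omega$ in a point of $\partial_+\widetilde\Omega$ and a point of $\partial_-\widetilde\Omega$, both being $\widehat\Phi$-images of the finite boundary arcs $\partial_\pm\Sigma$. By contrast a horizontal line $\{y=y'\}$ --- and every $Q'=(x',y')\in\widetilde\Omega$ has $0<y'<1/2$, since $\widetilde\Omega\subset\{0<y<1/2\}$ --- meets $\partial\widetilde\Omega$ in one point $Q'_+$ of the circular arc $\partial_+\widetilde\Omega$ and one point of the segment $\partial_0\widetilde\Omega=\{x=-1/2\}$, and $\partial_0\widetilde\Omega$ is not the $\widehat\Phi$-image of any finite arc of $\Sigma$ but arises, under $\widehat\Phi$, by blowing up the ideal endpoint ``$a\to\infty$'' of $\Sigma$. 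Hence a connected component of $\mathcal V_2(y')$ is an unbounded arc running off to $a=+\infty$, rather than the compact arc occurring for $\mathcal V_1(x')$; but it still has exactly one finite boundary point in $\overline\Sigma$, which is all the argument needs.

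First I would record that $\Phi_2$ is constant along the trajectories of $U_2$ (since $U_2\cdot\Phi_2=\partial_a\Phi_2-\partial_a\Phi_2=0$), that these trajectories are graphs $b=g(a)$ of strictly decreasing functions, and that each of them meets $\partial_+\Sigma$ in at most one point. Fix $Q'=(x',y')\in\widetilde\Omega$, let $Q'_+\in\partial_+\widetilde\Omega'=\partial_+\widetilde\Omega\setminus\{Q_1,Q_3\}$ be the intersection of $\partial_+\widetilde\Omega$ with the line $\{y=y'\}$, and let $P'_+=(a'_+,a'_+)$ be the unique point of $\partial_+\Sigma$ with $\widehat\Phi(P'_+)=Q'_+$ (it exists because $\widehat\Phi|_{\partial_+\Sigma'}$ is a real-analytic diffeomorphism onto $\partial_+\widetilde\Omega'$, as shown in the proof of Proposition~\ref{prop4.III}); one has $a'_+>1$, so $P'_+\in\Sigma'$, where $U_2$ is defined and nowhere vanishing.

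The key claim is that $P'_+$ is a boundary point of every connected component $\Gamma$ of $\mathcal V_2(y')$. Indeed, $\Gamma$ lies on a $U_2$-trajectory $\{(a,g(a))\}$; followed in the direction of decreasing $a$, the function $g$ increases while the point stays in $\Sigma=\{a>1,\ 1/a<b<a\}$, so the trajectory must leave $\Sigma$ across the diagonal $\partial_+\Sigma$ at some finite value of $a$. Since $\widehat\Phi_2$ is continuous and constant $=y'>0$ along the trajectory, that exit point carries the value $\widehat\Phi_2=y'$; as $\widehat\Phi_2$ vanishes on $\partial_-\Sigma$ and is injective in $a$ on $\partial_+\Sigma$, the exit point must be $P'_+$. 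The same continuity argument shows the trajectory can never again touch $\partial\Sigma$ when followed with $a$ increasing, so it stays in $\Sigma$ for all $a>a'_+$; hence $\Gamma=\{(a,g(a)):a>a'_+\}$ and $P'_+\in\overline\Gamma$. Now the proof finishes exactly as in Lemma~\ref{4.VI}: since $U_2$ does not vanish at $P'_+\in\overline\Gamma$, the component $\Gamma$ is contained in the integral curve of $U_2$ through $P'_+$; choosing a flow-box chart $(u,v)$ on a cubical neighbourhood $A$ of $P'_+$ with $U_2|_A=\partial_u$ and $v(P'_+)=0$, and observing that flowing forward from $P'_+$ increases $a$ and pushes $(a,g(a))$ off the diagonal into $\Sigma$, one obtains $\Gamma\cap A=\{v=0,\ 0<u<\epsilon_\Gamma\}$ for every component $\Gamma$. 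Any two components therefore meet inside $A$ and so coincide, which proves that $\mathcal V_2(y')$ is connected.

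The step I expect to be the main obstacle is the key claim above: controlling qualitatively how a $U_2$-trajectory carrying a positive level value enters and leaves $\Sigma$. One must combine the strict monotonicity of $g$ with the description $\Sigma=\{a>1,\ 1/a<b<a\}$ to see that decreasing $a$ genuinely forces the trajectory onto the diagonal within a finite range of $a$ (rather than letting it escape), and one must use $\widehat\Phi_2|_{\partial_-\Sigma}\equiv 0$ together with the injectivity of $\widehat\Phi_2$ along $\partial_+\Sigma$ both to identify the exit point as $P'_+$ and to forbid any further contact of the trajectory with $\partial\Sigma$. Beyond this, the argument is a line-by-line rewriting of the proof of Lemma~\ref{4.VI}.
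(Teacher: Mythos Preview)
Your proposal is correct and follows essentially the same route as the paper: show that every connected component of $\mathcal V_2(y')$ has the single point $P'_+\in\partial_+\Sigma$ in its closure, then use a flow-box chart for $U_2$ at $P'_+$ to force any two components to overlap. The paper's execution differs only in minor details --- it reaches the boundary point by taking a sequence $a_n\downarrow a^*$ along $\Gamma$, invokes maximality of $\Gamma$ to place the limit on $\partial\Sigma$, and then rules out $\partial_-\Sigma$ by a monotonicity consideration on $u(a_n)$ rather than by your (arguably cleaner) observation that $\widehat\Phi_2|_{\partial_-\Sigma}\equiv 0\neq y'$; your additional remark that $\Gamma$ extends to $a\to\infty$ is correct but not actually needed for the flow-box step.
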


 \begin{proof}
 Let $C$ be any connected component of $\mathcal{V}_2(y')$. Fix $(a,b)\in C$
 and take a strictly decreasing function $u : I \to \R$ whose graph coincides with $C$.
 Such a function is defined on an open interval $I$ of the form $(a-\epsilon_1,a+\epsilon_2)$,
 where $\epsilon_1,\epsilon_2>0$ and  $a-\epsilon_1>1$. Let $\{a_n\}\subset (a-\epsilon_1,a)$
 be a decreasing sequence, such that $\lim_{n\to \infty}a_n=a^*=a-\epsilon_1$.
By construction, $\{u(a_n)\}$ is an increasing sequence and $(a_n,u(a_n))\in C\subset \Sigma$,
 for every $n$. In particular,
$0<a_n^{-1}<u(a_n)<a_n<a$, which implies that $\{u(a_n)\}$ is bounded. By possibly
taking a subsequence, we may assume that $\{u(a_n)\}$ converges to a limit point $b^*$.
Since $\{(a_n,u(a_n))\}$ converges to $(a^*,b^*)$, the point $(a^*,b^*)$ belongs to
$\partial C \cap \bar{\Sigma}$.
\vskip0.1cm

\noindent $\bullet$ We prove that $(a^*,b^*)$ is an element of $\partial\Sigma$.
By contradiction, suppose that $(a^*,b^*)\in \Sigma$. Then, $C$ is
contained in the trajectory, $C'$, of the vector field $U_2|_{\Sigma}$ passing
through $(a^*,b^*)$. By definition, $(a^*,b^*)\in C'$ and $(a^*,b^*)\notin C$.
Therefore, $C'$ is a connected subset of $\mathcal{V}_2(y')$ which contains
properly $C$. This is absurd since $C$ is a connected component
of $\mathcal{V}_2(y')$.
\vskip0.1cm

\noindent By construction, since $1/a^*<u(a_n)$ and ${u(a_n)}$ is an increasing sequence,
$b^*\neq 1/a^*$. This shows that $(a^*,b^*)$ does not belong to $\partial_-\Sigma$.
Therefore, $(a^*,b^*)$ is an element of $\partial_+\Sigma$ and hence
$b^*=a^*$ and $a^*>1$.
The point $(a^*,a^*)$ is independent of the choice of the connected component.
Otherwise, the restriction of $\widetilde{\Phi}$ to $\partial_+\Sigma$ could not
be injective. We are now in the same situation as in the last part of
the proof of the previous lemma, i.e., the point $(a^*,a^*)$ belongs to the boundary
of any connected component of the level curve $\mathcal{V}_2(y')$.
By arguing as above, we deduce the result.
\end{proof}

 We are now ready to prove the last ingredient for the proof of Theorem~\ref{thm2}.

\begin{prop}\label{prop4.IV}
The mapping $\Phi$ is injective.
\end{prop}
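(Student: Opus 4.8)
The plan is to derive injectivity of $\Phi$ from the connectedness of the level curves established in Lemma~\ref{4.VI} (and the lemma following it), combined with the positivity of the Jacobian $J\Phi$ on $\Sigma$ (Proposition~\ref{prop4.II}) and of the four first-order partial derivatives of $\Phi_1,\Phi_2$ (Proposition~\ref{Prop4.I}). The first step is the observation that $\Sigma\subseteq\Sigma'$: if $(a,b)\in\Sigma$ then $ab>1>0$ together with $a>0$ forces $b>0$, and then $0<b<a$ together with $ab>1$ forces $a>1$; hence Proposition~\ref{Prop4.I} applies at every point of $\Sigma$, so in particular $\nabla\Phi_1$ and $\nabla\Phi_2$ are nowhere zero on $\Sigma$ and each fibre $\mathcal{V}_1(x')=\Phi_1^{-1}(x')\cap\Sigma$ is an embedded $1$-submanifold of $\Sigma$.

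Next I would fix $Q'=(x',y')\in\widetilde{\Omega}$ and take any two points $P_0,P_1\in\Phi^{-1}(Q')$. Both lie on $\mathcal{V}_1(x')$, which is connected by Lemma~\ref{4.VI}; since, as noted above, each of its components lies in a single trajectory of $U_1$ and is the graph of a strictly decreasing function, $\mathcal{V}_1(x')$ is a single embedded arc, globally parametrizable by flowing along $U_1=\bigl(1,-\partial_a\Phi_1/\partial_b\Phi_1\bigr)$. Along this arc I would compute the derivative of $\Phi_2$: since $U_1$ is everywhere tangent to $\mathcal{V}_1(x')$,
\[
d\Phi_2(U_1)=\partial_a\Phi_2-\frac{\partial_a\Phi_1}{\partial_b\Phi_1}\,\partial_b\Phi_2=-\frac{J\Phi}{\partial_b\Phi_1},
\]
which is strictly negative on $\Sigma$ by Propositions~\ref{prop4.II} and \ref{Prop4.I}. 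Hence $\Phi_2$ is strictly monotone, and therefore injective, along $\mathcal{V}_1(x')$; since $\Phi_2(P_0)=\Phi_2(P_1)=y'$, we conclude $P_0=P_1$, which is the asserted injectivity. The companion lemma on the connectedness of $\mathcal{V}_2(y')$ is not strictly needed for this argument, but it would allow one to run the same reasoning with the roles of $\Phi_1$ and $\Phi_2$ interchanged.

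I do not expect a genuine obstacle: all the analytic input — Jacobian positivity, monotonicity of the partial derivatives, and connectedness of the fibres of $\Phi_1$ — has already been established, and what remains is essentially bookkeeping, namely checking $\Sigma\subseteq\Sigma'$ so that every positivity statement is available on all of $\Sigma$, verifying that a \emph{connected} level curve is a single embedded arc (so that ``strictly monotone along it'' is a global, not merely local, statement), and getting the sign right in the identity $d\Phi_2(U_1)=-J\Phi/\partial_b\Phi_1$. Once Proposition~\ref{prop4.IV} is in hand, combining it with Proposition~\ref{prop4.III} shows that $\Phi:\Sigma\to\widetilde{\Omega}$ is a real-analytic diffeomorphism, which is Theorem~\ref{thm2}; Theorem~\ref{thm:A} then follows from Proposition~\ref{prop2}.
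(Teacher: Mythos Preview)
Your argument is correct and is in fact a cleaner variant of the paper's own proof. Both rest on the same inputs---connectedness of the fibre $\mathcal{V}_1(x')$ (Lemma~\ref{4.VI}), positivity of the partial derivatives on $\Sigma\subset\Sigma'$ (Proposition~\ref{Prop4.I}), and positivity of $J\Phi$ (Proposition~\ref{prop4.II})---but combine them differently. The paper parametrizes \emph{both} $\mathcal{V}_1(x)$ and $\mathcal{V}_2(y)$ as graphs $b=u(a)$ and $b=v(a)$, sets $h=v-u$, observes that at any common zero of $h$ one has $h'=J\Phi/(\partial_b\Phi_1\,\partial_b\Phi_2)>0$, and then argues by a Rolle-type step that two zeros with $h'>0$ would force a third with $h'\le 0$, contradicting Jacobian positivity. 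Your route short-circuits this: once $\mathcal{V}_1(x')$ is known to be a single connected graph over an $a$-interval, the identity $d\Phi_2(U_1)=-J\Phi/\partial_b\Phi_1<0$ makes $\Phi_2$ strictly monotone along it, so the value $y'$ is taken at most once. A genuine dividend, which you note, is that you need only Lemma~\ref{4.VI}; the companion lemma on connectedness of $\mathcal{V}_2(y')$ becomes superfluous, whereas the paper's argument uses both families of level curves.
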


\begin{proof}
It suffices to show that $\#\left(\mathcal{V}_1(x)\cap \mathcal{V}_2(y)\right)=1$,
for every $(x,y)\in \widetilde{\Omega}$. By contradiction, suppose the existence of
$(x,y)\in \widetilde{\Omega}$, such that $\sharp(\mathcal{V}_1(x)\cap \mathcal{V}_2(y))>1$.
Let $(a,b)$ and $(a_1,b_1)$ be two distinct elements of $\Sigma$, such that
$\Phi(a_1,b_1)$ $=$ $\Phi(a,b)$ $=$ $(x,y)$. From the above discussions, we know that the
level curves $\mathcal{V}_1(x)$ and $\mathcal{V}_2(y)$ are connected and
graphs of two strictly decreasing functions, denoted by $u$ and $v$, respectively.
The domain of definition is an open interval $I\subset (1,+\infty)$, containing $a$ and $a_1$.
By construction, $u(a)=v(a)=b$, $u(a_1)=v(a_1)=b_1$, with $a\neq a_1$.
On the other hand, $\mathcal{V}_1(x)$ and $\mathcal{V}_2(y)$ are contained in the
trajectories of the vector fields $U_1$ and $U_2$, respectively. From this, we have
\[
   u'(t)=-\frac{\partial_a\Phi_1}{\partial_b\Phi_1}\bigg|_{(t,u(t))},\quad
    v'(t)=-\frac{\partial_a\Phi_2}{\partial_b\Phi_2}\bigg|_{(t,v(t))},\quad \forall t\in I.
      \]
Now, Proposition \ref{Prop4.I} and Proposition \ref{prop4.II} imply
\begin{equation}
\frac{\partial_a\Phi_1}{\partial_b\Phi_1}\bigg|_{(\alpha,\beta)}-
\frac{\partial_a\Phi_2}{\partial_b\Phi_2}\bigg|_{(\alpha,\beta)}>0,
\quad \forall (\alpha,\beta)\in \Sigma.
\end{equation}
Then, the function $h=v-u$ satisfies $h(a)=h(a_1)=0$, $h'(a)>0$ and $h'(a_1)>0$.
This implies the existence of
$a_2\in I$, different from $a$ and $a_1$, such that $h(a_2)=0$ and $h'(a_2)\le 0$.
Consequently, the Jacobian of $\Phi$ is non positive at
$(a_2,b_2)$ $=$ $(a_2,u(a_2))=(a_2,v(a_2))\in \Sigma$,
contrary to Proposition~\ref{prop4.II}.
\end{proof}

\section{The proof of Theorem \ref{thm:B}}\label{s:4}

Theorem \ref{thm:B} asserts that
any M\"obius class of conformal strings is represented by a model conformal string.
We will begin the proof by describing the model strings.

\subsection{The symmetrical configuration of a string}

We have just proved that the M\"obius classes of conformal strings are in
one-to-one correspondence with the {\it moduli}, i.e., the elements of the countable set
\begin{equation}\label{Omega*}
 \Omega_*=\left\{(q_1,q_2)\in \mathbb{Q}^2 \,:\,
   1/2<q_1<1/\sqrt{2},\, q_2>0,\,  q_1^2+q_2^2< 1/2\right\}.
     \end{equation}
Thus, for every $q=(q_1,q_2)\in \Omega_*$, there is a unique $(a,b)\in \Sigma$
such that $\Phi_1(a,b)=-q_1$ and $\Phi_2(a,b)=q_2$.
 For every $(q_1,q_2)\in \Omega_*$, let
\begin{equation}\label{5.1}
 \Theta_1(t)=\int_0^t\frac{\mu}{\mu^2-k(u)^2}du,\quad
  \Theta_2(t)=\int_0^t\frac{\upsilon}{\upsilon^2-k(u)^2}du
  \end{equation}
and
\begin{equation}\label{5.2}
 r(t)=\sqrt{\mu^2-\upsilon^2}k(t)+\upsilon\sqrt{\mu^2-k(t)^2}
 \cos\Theta_1(t),
 \end{equation}
where $a,b$ are the parameters of $q$ and $k$, $\mu$, $\upsilon$ stand for
$k_{a,b}$, $\mu(a,b)$ and $\upsilon(a,b)$, respectively. Note that $(q_1,q_2)$
and $(a,b)$ are related by
\begin{equation}\label{5.3}
 q_1=
 \frac{1}{2\pi}\int_0 ^{\omega} \Theta_1'(t) dt,\quad q_2=-\frac{1}{2\pi}\int_0 ^{\omega} \Theta_2'(t)dt,
  \end{equation}
where $\omega=\omega(a,b)$ is the minimal period of $k$.

\begin{defn}\label{def:symm-conf}
The {\it symmetrical configuration} of the conformal strings with modulus $q=(q_1,q_2)$
is the parametrized curve $\gamma_{q}=(x,y,z) : \R\to \R^3$, defined by
\begin{equation}\label{5.4}
\begin{cases}
x(t)=\frac{\sqrt{2}}{r(t)}\mu\sqrt{k(t)^2-\upsilon^2}\cos\Theta_2(t),\\
y(t)=\frac{\sqrt{2}}{r(t)}\mu\sqrt{k(t)^2-\upsilon^2}\sin\Theta_2(t),\\
z(t)=\frac{\sqrt{2}}{r(t)}\upsilon\sqrt{\mu^2-k(t)^2}\sin\Theta_1(t).\\
\end{cases}
\end{equation}
\end{defn}

Theorem \ref{thm:B} is now a direct consequence of the following.

\begin{thm}\label{5.I}
 Let $q=(q_1,q_2)$ be any element of $\Omega_*$ and $(a,b)\in \Sigma_*$ be the
  corresponding parameters. Any conformal string with parameters $(a,b)$ is
   conformally equivalent to the symmetrical configuration $\gamma_{q}$.
    \end{thm}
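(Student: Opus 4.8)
The plan is to exhibit the symmetrical configuration $\gamma_q$ as the conformal projection of an explicit null lift built from the diagonalized monodromy system of Lemma \ref{l:momentum}, and then to identify that lift (up to an element of $G_+$) with the canonical null lift $\Gamma$ of an arbitrary conformal string with parameters $(a,b)$. Concretely, I would start from the decomposition $v_a = p_a w_a$ obtained in the proof of Lemma \ref{RL}, where $w_0 = k$, $w_{1,2} = \sqrt{\mu^2-k^2}\,e^{\pm i\Theta_1}$ and $w_{3,4} = \sqrt{\upsilon^2-k^2}\,e^{\pm i\Theta_2}$ (here $\Theta_1,\Theta_2$ are exactly the integrals in \eqref{5.1}). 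Since $\Gamma = Y\cdot V$ with $V = (v_0,\dots,v_4)$, a specific choice of the constants $p_a$ — dictated by the initial condition $\Gamma(0)=e_4$ together with the requirement that $\Gamma$ be real and null — produces a distinguished null lift whose components, after applying the explicit change of basis $Y$ and the conformal projection $\mathcal P$ of \eqref{1.1.4}, are precisely the three coordinate functions $x(t),y(t),z(t)$ in \eqref{5.4}. The factor $1/r(t)$ in \eqref{5.4} is the $v^4$-component of this lift in the standard basis, which explains why $r(t)$ has the form \eqref{5.2}.

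The key steps, in order, are: (1) compute $Y$ explicitly (an eigenvector matrix for $\mathfrak{X}$ with eigenvalues $0,\pm i\mu,\pm i\upsilon$), normalized so that the resulting $\Gamma$ lies in $\mathcal L_+$ and is a genuine positive light-cone vector; (2) write out $\Gamma = Y\cdot V$ in the standard basis $(e_0,\dots,e_4)$ and read off its five components as explicit combinations of $k$, $\sqrt{\mu^2-k^2}$, $\sqrt{k^2-\upsilon^2}$, $\cos\Theta_j$, $\sin\Theta_j$; (3) apply $\mathcal P$ and verify algebraically — using only the identities $k'^2=-(k^2-a)(k^2-b)$, the definitions \eqref{2.2.1} of $\mu,\upsilon$ (so that $\mu^2+\upsilon^2 = a+b$, $\mu^2\upsilon^2 = ab-1$, $\mu^2-\upsilon^2=\sqrt{4+(a-b)^2}$), and $\mu^2\ge a > k^2$, $\upsilon^2 < 0 < k^2$ on the relevant range — that the three quotients collapse to \eqref{5.4} with the stated $r(t)$; (4) check the constant $C_1=(a+b)/2$ reconciles with $k_2 = -\tfrac32 k^2 + C_1$, i.e. that this lift really is the canonical null lift of a critical curve with parameters $(a,b)$; (5) invoke the Remark after Proposition \ref{P1.2.1} (two solutions of \eqref{1.2.3} with $G_+$ initial data differ by left multiplication by a fixed $A\in G_+$) to conclude that \emph{any} conformal string with parameters $(a,b)$ is $G$-equivalent — in fact $G_+$-equivalent up to the orientation normalizations already made in Section \ref{s:2} — to $\gamma_q$. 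Finally, one should note that $\Phi_j(a,b)\in\mathbb Q$ (the defining property of $\Sigma_*$) is exactly what makes $e^{\pm i\Theta_j}$, hence $\gamma_q$, periodic, so $\gamma_q$ is genuinely closed, and its coordinates are the ones displayed in Theorem \ref{thm:B} once \eqref{2.1.5} is substituted for $k$.

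The main obstacle is step (3): verifying that the explicit ratios of components of $Y\cdot V$ simplify to the compact closed forms \eqref{5.4}. This is a substantial but purely mechanical trigonometric-and-elliptic computation — it requires repeatedly using $k'^2+(k^2-a)(k^2-b)=0$ to trade $k'$ for $k$, the Wronskian-type relations among the $w_a$ (for instance $w_1 w_2 = \mu^2-k^2$ and $w_0^2 = k^2$, which force quadratic constraints on the $p_a$ through nullity $(\Gamma,\Gamma)=0$), and the algebraic identities for $\mu,\upsilon$ above to recognize that the denominator of $\mathcal P(\Gamma)$ equals $r(t)/\sqrt2$. I would organize it by first nailing down the $p_a$ from the three conditions $(\Gamma,\Gamma)=0$, $\Gamma$ real, $\Gamma(0)=e_4$ — this pins $Y$ up to the residual freedom already absorbed into the symmetry normalization — and then expanding componentwise; the real and imaginary parts of the conjugate pairs $v_1,v_2$ and $v_3,v_4$ recombine into the $\cos\Theta_j,\sin\Theta_j$ terms, and the reality constraint is what kills the would-be $\cos\Theta_2$-term in $z(t)$ and the $\sin\Theta_1$-cross-term in $x(t),y(t)$, leaving exactly \eqref{5.4}. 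Once the coordinates match, the equivalence statement is immediate from the uniqueness of the Vessiot frame. I would relegate the bulk of step (3) to a computation "carried out with \textsl{Mathematica}," in keeping with the paper's stated conventions, and present only the structural identities that make the collapse transparent.
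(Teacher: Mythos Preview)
Your overall strategy coincides with the paper's: use the diagonalization from Lemma~\ref{RL} to express the canonical null lift as a constant linear combination of the functions $w_0,\dots,w_4$, and then match this against an explicit null lift of $\gamma_q$. However, the paper sidesteps the heavy computation you anticipate in step~(3). Rather than computing the eigenvector matrix $Y$ and the constants $p_a$ explicitly and then simplifying $\mathcal{P}(Y\cdot V)$, the paper simply \emph{writes down} a candidate null lift $\widetilde{\Gamma}$ of $\gamma_q$ (equation~\eqref{5.5}): one checks in one line that $\mathcal{P}([\widetilde{\Gamma}])=\gamma_q$ (the formulas \eqref{5.4} are literally $\tfrac{1}{\widetilde\gamma_4}(\widetilde\gamma_1,\widetilde\gamma_2,\widetilde\gamma_3)$), and equally trivially that each $\widetilde\gamma_j$ is a constant linear combination of $w_0,\dots,w_4$ (just expand $\cos\Theta_j,\sin\Theta_j$ in terms of $e^{\pm i\Theta_j}$). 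Hence $W=\mathrm{Z}\cdot\widetilde{\Gamma}$ and $\Gamma=\tilde Y\cdot W=\mathrm{L}\cdot\widetilde{\Gamma}$ for constant matrices, so the Vessiot frames satisfy $\mathrm{F}=\mathrm{L}\cdot\mathrm{F}_q$, forcing $\mathrm{L}\in G_+$. No elliptic simplifications or \textsl{Mathematica} verification are needed for this step.

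There is also a genuine slip in your plan: you propose to pin down the $p_a$ via the initial condition $\Gamma(0)=e_4$ and then verify that the resulting projection is $\gamma_q$. But $\Gamma(0)=e_4$ forces $\gamma(0)=\mathcal{P}(e_4)=(0,0,0)$, whereas from \eqref{5.4} one has $\gamma_q(0)=\bigl(\tfrac{\sqrt{2}}{r(0)}\mu\sqrt{a-\upsilon^2},\,0,\,0\bigr)\neq 0$, since $k(0)=\sqrt a$ and $\upsilon^2<a<\mu^2$. Thus the reference curve with $\mathrm{F}(0)=\mathrm{Id}$ is \emph{not} literally $\gamma_q$; they are only $G_+$-equivalent, which is exactly what the theorem asserts. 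Your step~(4) therefore cannot succeed as written. The clean way out is precisely the paper's: exhibit $\widetilde{\Gamma}$ directly by \eqref{5.5}, and deduce the equivalence from the constant-matrix relation $\Gamma=\mathrm{L}\cdot\widetilde{\Gamma}$ rather than trying to make the two curves coincide pointwise.
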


\begin{proof}
Consider the unique conformal parametrization $\gamma :\R\to \R^3$ of a conformal
string with parameters $a$, $b$ whose Vessiot frame $\mathrm{F}$ satisfies the
initial condition $\mathrm{F}(0)=\mathrm{Id}$. It suffices to prove that $\gamma$ is
conformally equivalent to $\gamma_q$.
By Lemma \ref{RL}, the canonical lift $\Gamma:\R\to \mathcal{L}_+$
of $\gamma$ takes the form $\tilde{Y} W$, where $\tilde{Y}\in GL(5,\C)$ and
$W={}^t\!(w_0, \dots, w_4)$ is the $\C^5$-valued map defined by
\begin{equation}\label{5.5.a}
\begin{aligned}
w_0&=k,\quad
 w_1=\sqrt{\mu^2-k^2}e^{i\Theta_1(t)},\quad
  w_2=\sqrt{\mu^2-k^2}e^{-i\Theta_1(t)},\\
   w_3&=\sqrt{\upsilon^2-k^2}e^{i\Theta_2(t)},\quad
    w_4=\sqrt{\upsilon^2-k^2}e^{-i\Theta_2(t)},
      \end{aligned}
     \end{equation}
On the other hand, the curve
$\widetilde{\Gamma}={}^t\!(\widetilde{\gamma}_0, \dots, \widetilde{\gamma}_4) :\R\to \mathcal{L}_+$,
defined by
\begin{equation}\label{5.5}
\begin{cases}
\widetilde{\gamma}_0(t)=\frac{1}{\sqrt{2}}\left(
\sqrt{\mu^2-\upsilon^2}k(t)-\upsilon\sqrt{\mu^2-k(t)^2}\cos\Theta_1(t)\right),\\
\widetilde{\gamma}_1(t)=\mu\sqrt{k(t)^2-\upsilon^2}\cos\Theta_2(t),\\
\widetilde{\gamma}_2(t)=\mu\sqrt{k(t)^2-\upsilon^2}\sin\Theta_2(t),\\
\widetilde{\gamma}_3(t)=\upsilon\sqrt{\mu^2-k(t)^2}\sin\Theta_1(t),\\
\widetilde{\gamma}_4(t)=\frac{1}{\sqrt{2}}\left(
\sqrt{\mu^2-\upsilon^2}k(t)+\upsilon\sqrt{\mu^2-k(t)^2}\cos\Theta_1(t)\right),
\end{cases}
\end{equation}
is a null lift of $\gamma_{q}$. From \eqref{5.5.a} and \eqref{5.5}, it follows
that $W=\mathrm{Z} \widetilde{\Gamma}$, for some $\mathrm{Z}\in GL(5,\C)$.
Consequently, $\Gamma=\mathrm{L} \widetilde{\Gamma}$, for a suitable
$\mathrm{L}\in GL(5,\C)$, which yields $\mathrm{F}=\mathrm{L} \mathrm{F}_{q}$,
where $\mathrm{F}_{q}$ is the Vessiot frame along $\gamma_{q}$. Thus $\mathrm{L}\in G_+$,
which implies that $\gamma$ and $\gamma_{q}$ are equivalent to each other.
\end{proof}

\begin{remark}
The map $\Phi$ can be inverted by numerical methods. Once we know the parameters $a$ and
$b$ which correspond to the modulus $q$, we can use~\eqref{5.4} to find the explicit
parametrization of the symmetrical configuration $\gamma_{q}$.
\end{remark}

\section{The proof of Theorem \ref{thm:C}}\label{s:5}

Let $\Omega_*$ be the countable set introduced in \eqref{Omega*}.
For every $q=(q_1,q_2)\in \Omega_*$, let $q_1=m_1/n_1$ and $q_2=m_2/n_2$,
with $(m_1,n_1)=(m_2,n_2)=1$. Let $n$ denote the least common multiple of $n_1$ and $n_2$.
Then, $n=h_1n_1$ and $n=h_2n_2$, where $h_1$ and $h_2$ are two positive coprime
integers. Let $\gamma_q$ be the symmetrical configuration corresponding to $q$ (cf. Definition
\ref{def:symm-conf}).
We are now ready to prove Theorem \ref{thm:C} asserting that (1) $n$ is the order of the
symmetry group of $\gamma_q$ and (2) $m_1h_1$ and $m_2h_2$ are, respectively, the linking numbers
of $\gamma_q$ with the Clifford circle $\mathcal{C}=\left\{(x,y,0) \in\R^3 : x^2+y^2=2\right\}$
and with the $z$-axis.

\begin{proof}[Proof of Theorem \ref{thm:C}]

The curve $\gamma_q$ is a real-analytic closed curve with positive chirality (cf. Section \ref{ss:1.2},
Definition \ref{def:monodromy-chirality}).
Therefore, the symmetry group of $\gamma_q$ is generated by the monodromy of $\gamma_q$. By construction,
the canonical null lift $\Gamma:\R\to \mathcal{L}_+$ is as in \eqref{5.5}
and the first conformal curvature is a strictly positive periodic function,
with minimal period $\omega$. Using~\eqref{5.5}, we find
$\Gamma_{q}(t+\omega)=R(2\pi q_2,2\pi q_1) \Gamma_{q}(t)$, where
$R(2\pi q_2,2\pi q_1)$ is as in \eqref{1.1.6}. Therefore, $R(2\pi q_2,2\pi q_1)$
is the monodromy of $\gamma_q$. This implies that the symmetry group of $\gamma_q$ is
generated by $R(2\pi q_2,2\pi q_1)$. Let $[z]$ denote the $z$-axis with the downward
orientation induced by the parametrization $\alpha(s)=(0,0,-s)$, and let $[\mathcal{C}]$
be the Clifford circle equipped with the orientation induced by the
rational parametrization
\[
 \beta : t\in \R \mapsto (-\sqrt{2}(t^2-2)/(t^2+2), 4t/(2+t^2), 0)\in \R^3.
  \]
Now we compute the Gauss linking integrals $\mathrm{lk}(\gamma_q,[z])$
and $\mathrm{lk}(\gamma_q,[\mathcal{C}])$.
If $\gamma_q(t) = (x(t),y(t),z(t))$,
the Gauss linking integral $\mathrm{lk}(\gamma_q,[z])$ is given by
\[
\begin{split}
\mathrm{lk}(\gamma_q,[z])&=\frac{1}{4\pi} \int_{[\gamma_q]}\int_{z}
\left\langle \frac{\gamma_q-\alpha}{\|\gamma_q-\alpha\|^3},  d\gamma_q\times d\alpha\right \rangle\\
&=-\frac{1}{4\pi} \int_{0}^{n\omega}\left(\int_{-\infty}^{+\infty}\left(\frac{x(t)y'(t)-x'(t)y(t)}{\left[x(t)^2
  +y(t)^2+(z(t)+ s)^2\right]^{3/2}}\right)ds\right)dt\\
&=-\frac{1}{2\pi} \int_0^{n\omega}\frac{x(t)y'(t)-x'(t)y(t)}{x(t)^2+y(t)^2}dt\\
 &= -\frac{n}{2\pi}\int_0^{\omega}\Theta_2'(t)dt = n\Phi_2= n q_2 = h_2m_2.
\end{split}
\]
To compute the linking integral of $\gamma_q$ with the Clifford circle we consider
the o\-rienta\-tion-\-pre\-serving conformal involution
\begin{align}
  \Psi:(x,y,z)\in \R^3&\mapsto
   \frac{1}{x^2+y^2+z^2+2\sqrt{2}x+2}\\
 \notag  &\quad\times\left(x^2+y^2+z^2+2\sqrt{2},4z,4y\right)\in \R^3.
   \end{align}
This map takes $(-\sqrt{2},0,0)$ to the point at infinity
and exchanges the roles of the two axes of symmetry, i.e., $\Phi\circ \beta=-\alpha$.
A direct computation shows that the parametric equations of $\gamma_* :=\Phi\circ \gamma_q$
are
\[
\begin{cases}
x_*(t)=\frac{\sqrt{2}}{r^*(t)}\upsilon\sqrt{\mu^2-k(t)^2}\cos \Theta_1(t),\\
y_*(t)=\frac{\sqrt{2}}{r^*(t)}\upsilon\sqrt{\mu^2-k(t)^2}\sin \Theta_1(t),\\
z_*(t)=\frac{\sqrt{2}}{r^*(t)}\mu\sqrt{k(t)^2-\upsilon^2}\sin \Theta_2(t),
\end{cases}
\]
where $r^*(t)=\sqrt{\mu^2-\upsilon^2}k(t)+\mu\sqrt{k(t)^2-\upsilon}\cos \Theta_2(t) $.
We then have
\[
\begin{split}
\mathrm{lk}(\gamma_q,[\mathcal{C}])&=-\mathrm{lk}(\gamma_*,[z])=
-\frac{1}{4\pi} \int_{[\gamma_*]}\int_{z}
\left\langle \frac{\gamma_*-\alpha}{\|\gamma_*-\alpha\|^3},  d\gamma_*\times d\alpha\right\rangle\\
&=\frac{1}{4\pi} \int_{0}^{n\omega}\left(\int_{-\infty}^{+\infty}
\left(\frac{x_*(t)y_*'(t)-x_*'(t)y_*(t)}{\left[x_*(t)^2+y_*(t)^2+
(z_*(t)+ s)^2\right]^{3/2}}\right)ds\right)dt\\
&=\frac{1}{2\pi} \int_0^{n\omega}\frac{x_*(t)y_* '(t)- x_* '(t)y_*(t)}{x_*(t)^2+y_*(t)^2}dt\\
 &= \frac{n}{2\pi}\int_0^{\omega}\Theta_1'(t)dt = -n\Phi_1= nq_1 = h_1m_1,
\end{split}
\]
which yields the required result.
\end{proof}

\begin{figure}[ht]
\begin{center}
\includegraphics[height=6cm,width=6cm]{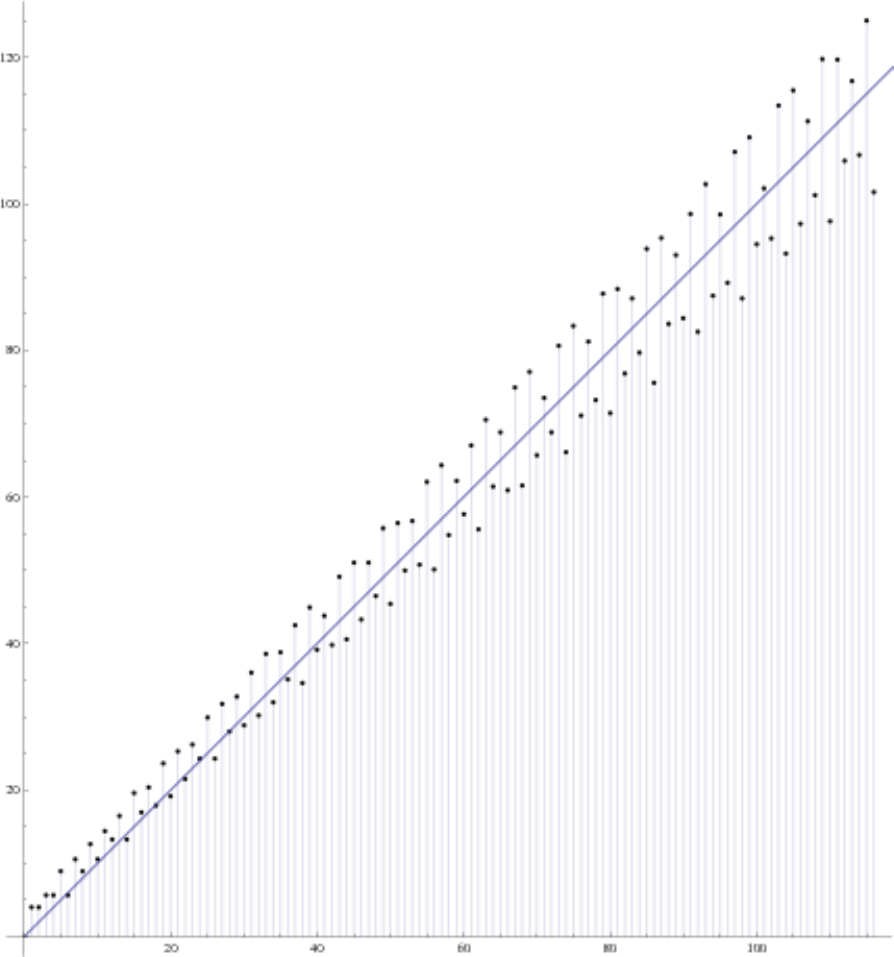}
\caption{The graph of the function $4\sqrt{\varrho(n)}$, $5\le n \le 120$.}\label{FIG3}
\end{center}
\end{figure}

\section{Final remarks and examples}\label{s:6}

In this section we discuss some examples
and comment on some numerical experiments carried out
with the software {\sl Mathematica}.

\subsection{Euclidean and Clifford symmetries}
Let $\gamma$ be the symmetrical configuration with modulus $q=(q_1,q_2)$.
Then, $E(q_1,q_2)=R(2\pi q_2,2\pi q_1)^{n_1}$ is the Euclidean rotation of
angle $2\pi m_2 n_1/n_2$ around the $z$-axis. The subgroup of order $h_1$ generated
by $E(q_1,q_2)$ is the {\it Euclidean symmetry group} of the symmetrical configuration.
Similarly, $C(q_1,q_2)=R(2\pi q_2,2\pi q_1)^{n_2}$ is the toroidal rotation of angle
$2\pi m_1 n_2/n_1$ around the Clifford circle. The cyclic subgroup of order $h_2$
generated by $C(q_1,q_2)$ is the {\it Clifford symmetry group} of the symmetrical
configuration. Denoting by $\omega$ the conformal wavelength of $\gamma$, the arc
$[\gamma]_0=\gamma([0,\omega))\subset [\gamma]$ is the {\it fundamental domain} of
the string and the trajectory $[\gamma]$ is obtained from $[\gamma]_{0}$
via $G_{\gamma}$, i.e.,
$
 [\gamma]=\bigcup_{A\in G_{\gamma}} A\cdot [\gamma]_0
  $.

\subsection{Quantum numbers} The symmetrical configurations are labeled by three quantum numbers:
the order of the symmetry group and the linking numbers with the Clifford circle and
the $z$-axis. We use the notation $|n,\ell_1,\ell_2>$ for the symmetrical
configuration with a symmetry group of order $n$ and linking numbers
$\mathrm{lk}(\gamma,[\mathcal{C}])=\ell_1$ and $\mathrm{lk}(\gamma,[z])=\ell_2$, respectively.
Let $\varrho(n)$ be the cardinality of the set of the symmetrical configurations
with index of symmetry $n$. There are no symmetrical configurations
with $n=1,2,3,4$. Figure \ref{FIG3} reproduces the graph of the function
$4\sqrt{\varrho}$, for $5\leq n \leq 120$.
This suggest that $\varrho$ has an asymptotic quadratic growth.

\begin{figure}[h]
\begin{center}
\includegraphics[height=6cm,width=6cm]{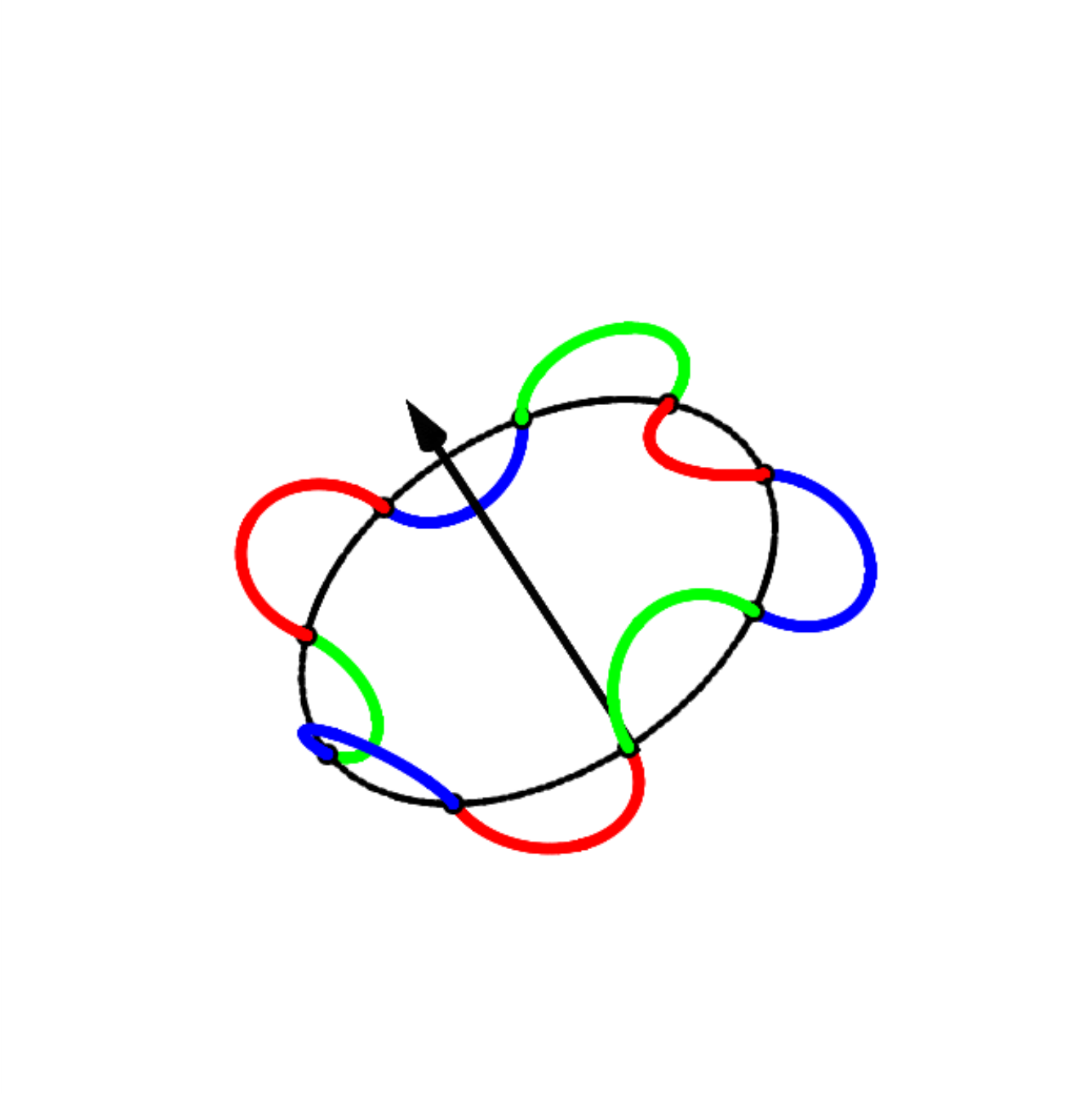}
\includegraphics[height=6cm,width=6cm]{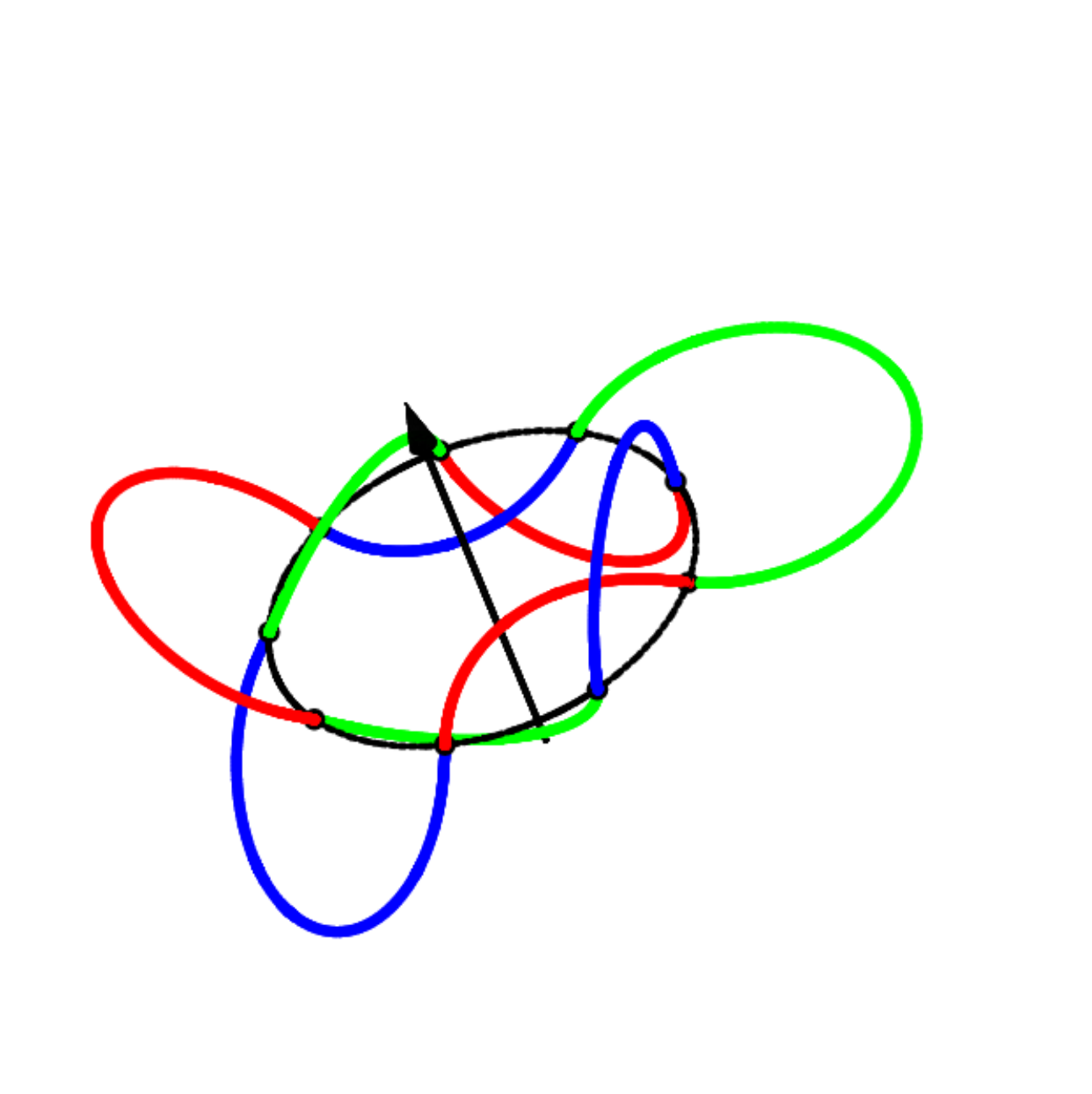}\\
\vspace{-5em}
\includegraphics[height=6cm,width=6cm]{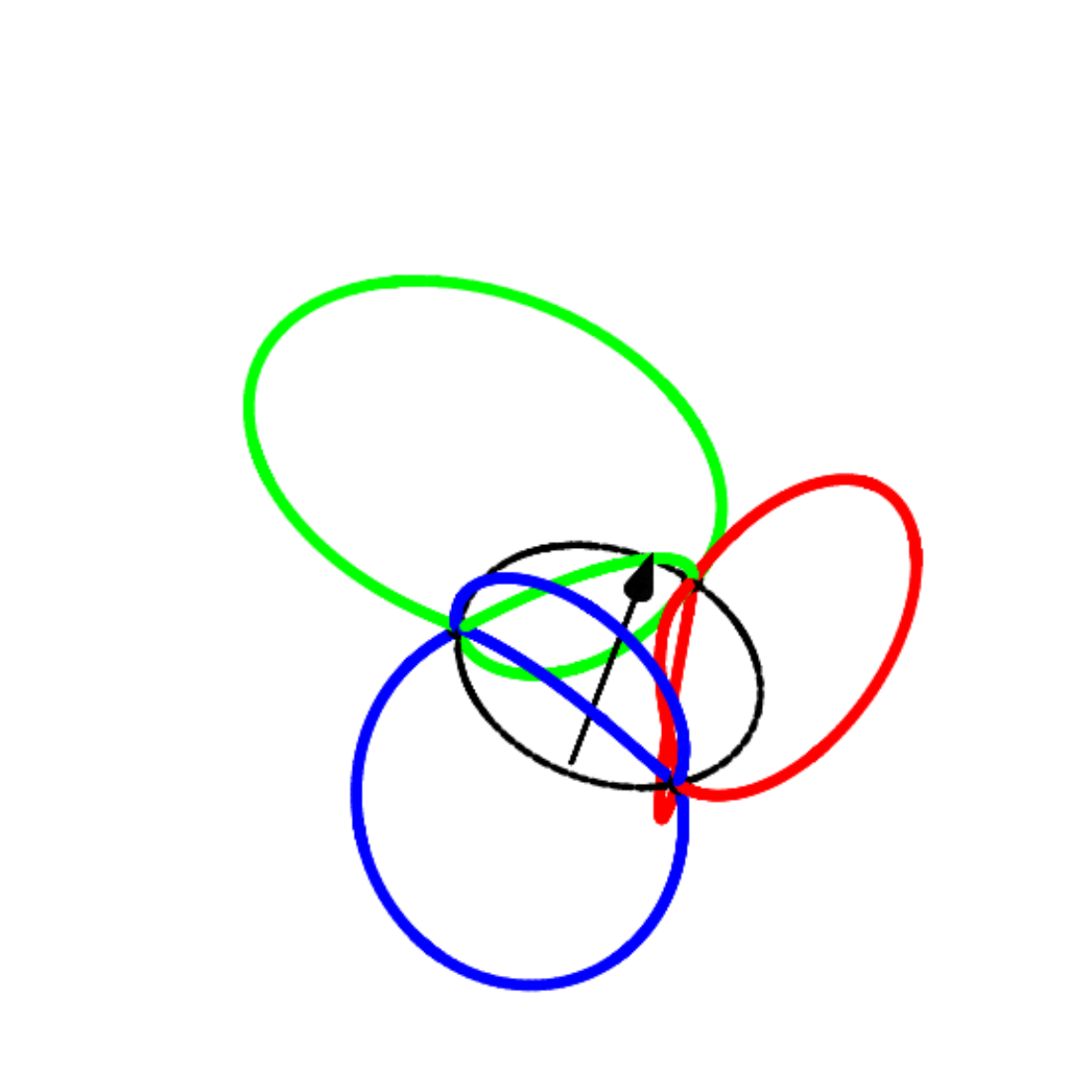}\vspace{-1em}
\caption{Symmetrical configurations $|9,5,1>$, $|9,5,2>$, and $|9,5,3>$, respectively.}\label{FIG4}
\end{center}
\end{figure}

\begin{figure}[h]
\begin{center}
\includegraphics[height=6cm,width=6cm]{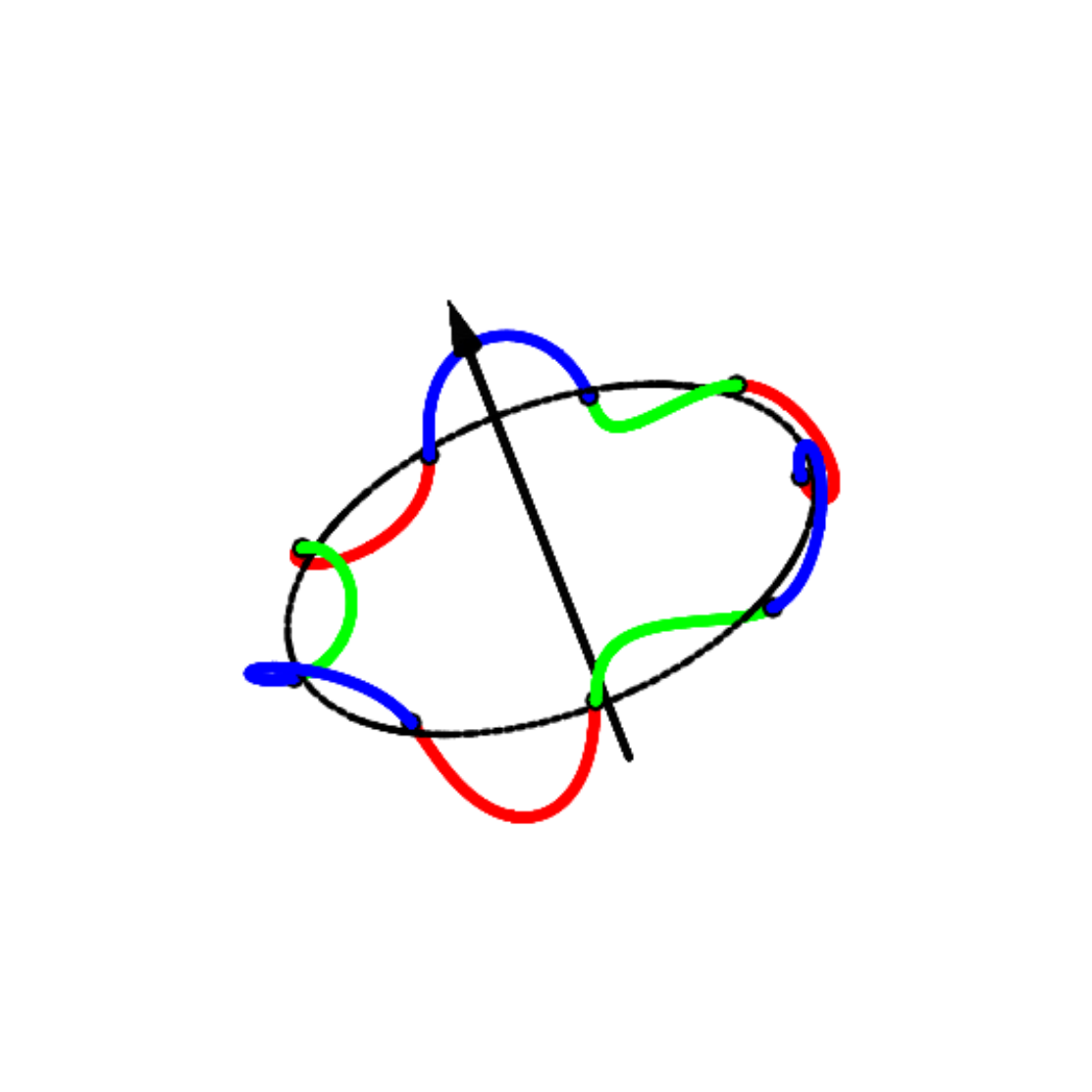}
\includegraphics[height=6cm,width=6cm]{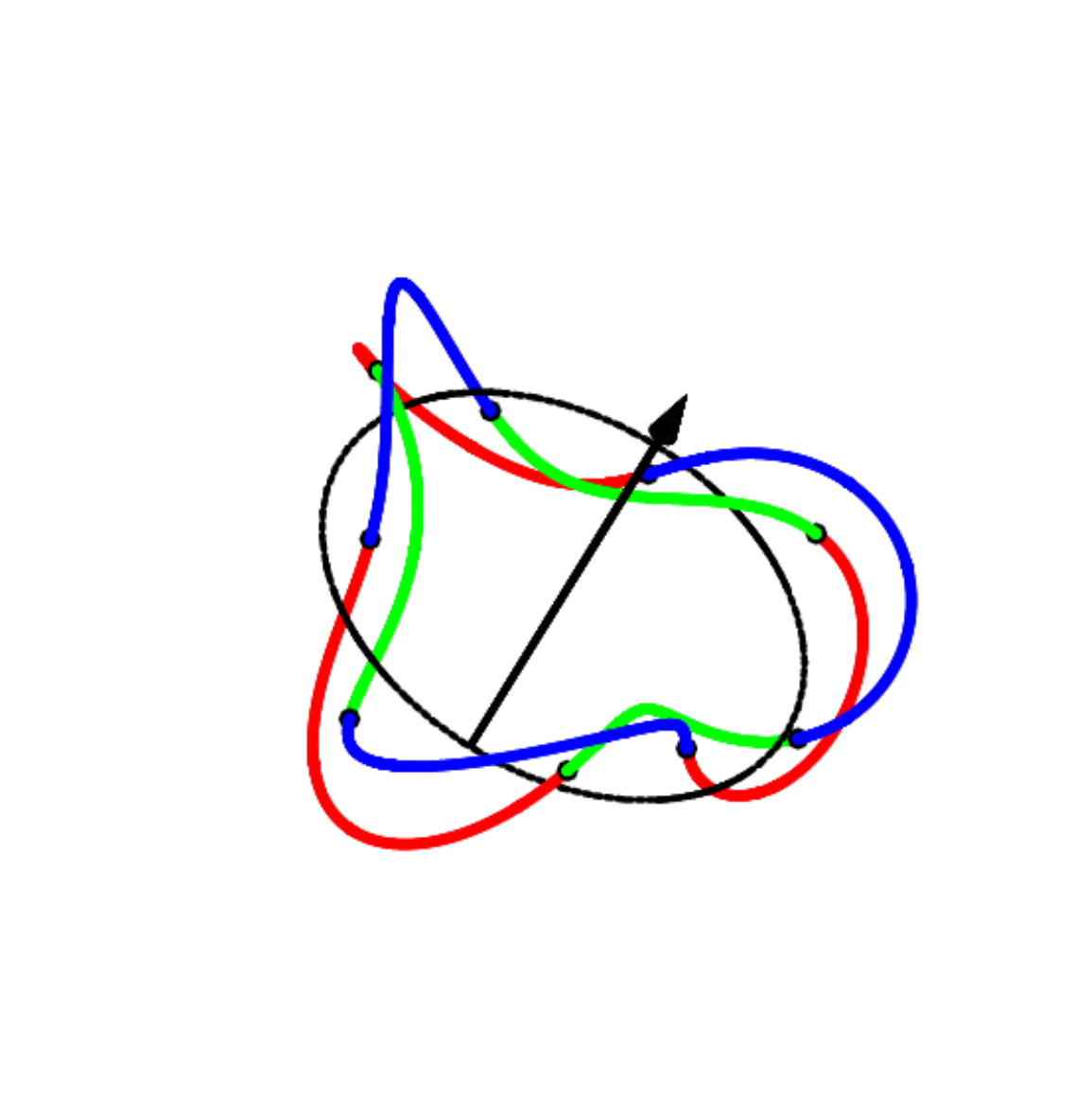}\vspace{-2em}
\caption{The symmetrical configurations $|9,6,1>$ and $|9,6,2>$, respectively.}\label{FIG5}
\end{center}
\end{figure}

\begin{figure}[h]
\begin{center}
\includegraphics[height=6cm,width=6cm]{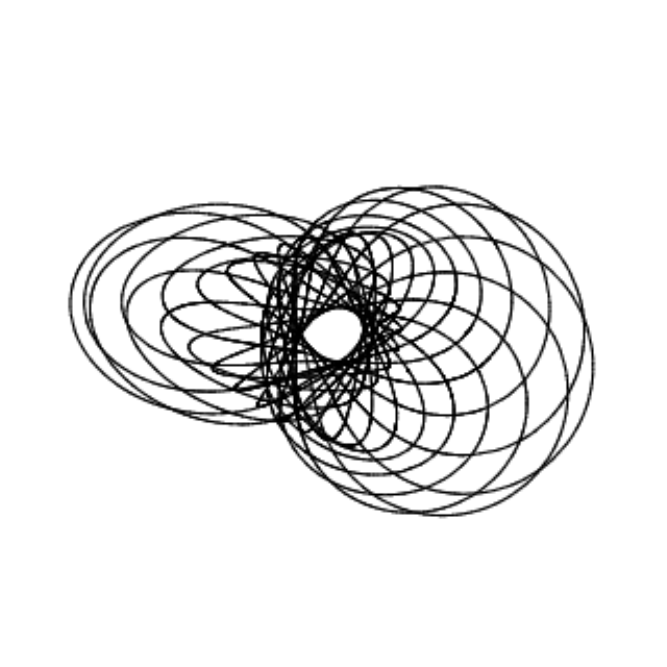}\vspace{-3em}
\caption{The symmetrical configuration $|63,37,24>$.}\label{FIG6}
\end{center}
\end{figure}

\begin{ex}
In this final example, we consider the symmetrical configurations with $n=9$.
This set consists of five elements: $|9,5,1>$, $|9,5,2>$, $|9,5,3>$, $|9,6,1>$,
and $|9,6,2>$.
The moduli of these strings are $(5/9,1/9)$, $(5/9,2/9)$, $(5/9,1/3)$, $(2/3,1/9)$,
and $(2/3,2/9)$, respectively. The Euclidean and the Clifford symmetry subgroups of the first
two strings are trivial. The Euclidean symmetry group of the third string is trivial, while
the Clifford symmetry group has order 3 (see Figure \ref{FIG4}). The fourth and fifth
strings have an Euclidean symmetry group of order 3 and have no non-trivial Clifford
symmetries (see Figure \ref{FIG5}). The invariants $a$, $b$ and the conformal wavelength
$\omega$ can be tabulated against the quantum numbers $n$, $\ell_1$ and $\ell_2$.
For instance, the values of the natural parameters $a$, $b$ and those of the conformal
wavelength $\omega$ of the stings with $n=9$ are
\[
\begin{aligned}
<a|9,5,1>&\approx 18.6403, \quad <b|9,5,1>\approx 0.06069,\quad <\omega|9,5,1> \approx 1.96996,\\
<a|9,5,2>&\approx 16.9699 \quad <b|9,5,2>\approx 0.09982,\quad <\omega|9,5,2> \approx  1.92188,\\
<a|9,5,3>&\approx 13.3269, \quad <b|9,5,3>\approx 0.29125 ,\quad  <\omega|9,5,3> \approx  1.81376,\\
<a|9,6,1>&\approx 2.6203, \quad <b|9,6,1>\approx 0.42577,\quad  <\omega|9,6,1>\approx  2.90618,\\
<a|9,6,2>&\approx 1.7209, \quad <b|9,6,2>\approx 0.90777,\quad  <\omega|9,6,2>\approx  2.79219.
\end{aligned}
\]

The shape of the strings becomes more complicated when $n$, $\ell_1$ and $\ell_2$ increase, see
for instance Figure \ref{FIG6}, where the symmetrical configuration with $n=63$,
$\ell_1=37$ and $\ell_2=24$ is reproduced. This string has no non-trivial Euclidean symmetries and
the subgroup of its Clifford symmetries has order~$3$.
\end{ex}

\end{document}